\documentclass[smallextended,envcountsect]{svjour3} 

\smartqed 

\usepackage{graphicx}
\usepackage{a4wide}
\usepackage{enumitem}
\usepackage{color}
\usepackage{marvosym}
\usepackage{amsmath,amssymb,url}
\usepackage{algorithmicx}
\usepackage{algpseudocode}
\usepackage[Algorithm,ruled]{algorithm}

\usepackage{subfigure}

\journalname{---}

\usepackage{cite} 

\newtheorem{assumption}{Assumption}
\graphicspath{{../}} 
\newcommand{\norm}[1]{\textstyle{\Vert} #1 \textstyle{\Vert}_2}

\newcommand{\dom}[1]{\mathrm{dom}\, #1} 

\newcommand{\col}[1]{\left\{#1\right\}}

\newcommand{\tol}{\texttt{Tol}}
\newcommand{\inner}[2]{\langle#1,#2\rangle}

\newcommand{\ind}{\iota}

\newcommand{\argmin}{\operatorname{argmin}}

\newcommand{\epi}[1]{\operatorname{epi}#1}

\newcommand{\fc}[2]{: #1 \rightarrow #2}

\newcommand{\tto}{\rightrightarrows}

\renewcommand{\Re}{\mathbb{R}}
\newcommand{\proj}{\mathtt{P}}

\newcommand{\modelh}{\mathfrak{h}}

\newcommand{\mybox}{{\hfill $\square$}}

\definecolor{gris}{gray}{0.6}

\newcommand{\nats}{\mathbb{N}}
\newcommand{\reals}{\mathbb{R}}
\newcommand{\Reals}{\overline{\mathbb{R}}}
\newcommand{\nInnLim}{\mathop{\rm LimInn}\nolimits}
\newcommand{\nOutLim}{\mathop{\rm Lim\hspace{-0.01cm}Out}\nolimits}

\newcommand{\nargmax}{\mathop{\rm argmax}\nolimits}
\newcommand{\nargmin}{\mathop{\rm argmin}\nolimits}

\newcommand{\nliminf}{\mathop{\rm liminf}\nolimits}
\newcommand{\cN}{{\cal N}}
\newcommand{\grill}{{\scriptscriptstyle\#}}
\DeclareMathOperator{\con}{con}
\DeclareMathOperator{\dist}{dist}
\DeclareMathOperator{\gph}{gph}
\newcommand{\sto}{\,{\lower 1pt\hbox{$\rightarrow$}}\kern -10pt
     \hbox{\raise 4pt \hbox{$\, \scriptstyle s$}}\hskip7pt}
\newcommand{\eto}{\,{\lower 1pt\hbox{$\rightarrow$}}\kern -10pt
     \hbox{\raise 4pt \hbox{$\, \scriptstyle e$}}\hskip7pt} 
\newcommand{\gto}{\,{\lower 1pt\hbox{$\rightarrow$}}\kern -10pt
     \hbox{\raise 4.5pt \hbox{$\, \scriptstyle g$}}\hskip7pt}     
\def\Nto{\,{\raise 1pt\hbox{$\rightarrow$}}\kern -12pt
     \hbox{\lower 3pt \hbox{$\, \scriptstyle N$}}\hskip7pt}       
\newcommand{\nnmin}{\mathop{\rm minimize}}

\begin{document}
\title{Composite Optimization using Local Models and Global Approximations}

\author{Welington de Oliveira$^1$ \and Johannes O. Royset$^2$}
\authorrunning{W. de Oliveira and J.O. Royset}

\institute{$1$ MINES Paris, Universit\'e PSL, Centre de Math\'ematiques Appliqu\'ees (CMA), Sophia Antipolis
06904, France \\
 \email{\url{welington.oliveira@minesparis.psl.eu}}\\
$2$ University of Southern California, Daniel J. Epstein Dep. of Industrial \& Systems Eng., Los Angeles, 90089, USA\\
\email{\url{royset@usc.edu}}}

\date{\today}

\maketitle

\begin{abstract}
This work presents a unified framework that combines global approximations with locally built models to handle challenging nonconvex and nonsmooth composite optimization problems, including cases involving extended real‑valued functions. We show that near‑stationary points of the approximating problems converge to stationary points of the original problem under suitable conditions. Building on this, we develop practical algorithms that use tractable convex master programs derived from local models of the approximating problems. The resulting double-loop structure improves global approximations while adapting local models, providing a flexible and implementable approach for a wide class of composite optimization problems. It also lays the groundwork for new algorithmic developments in this domain.
\end{abstract}

\keywords{Composite Optimization \and Nonsmooth Optimization \and Nonconvex Optimization \and Variational Analysis}
\subclass{49J52 \and 49J53 \and  49K99 \and 90C26}

\section{Introduction}\label{sec:introduction}

Algorithms for continuous optimization mainly fall in one of two categories: {\em Model-based algorithms} proceed by iteratively constructing and solving {\em local} models, while {\em consistent approximation algorithms} construct one or more {\em global} approximations before optimizing by existing methods. Examples of the first category include gradient descent, Newton's, and cutting-plane methods as well as the large variety of bundle, trust-region, and surrogation methods. With some exceptions, model-based algorithms construct a {\em convex} model locally near a current iterate using the function value and a subgradient at the iterate and, possibly, also information from prior iterates and nearby points. In our assessment, this approach has been successful broadly and has resulted in some of the leading algorithms for continuous optimization; see, e.g.,  \cite{ConnGouldToint.00, NocedalWright.06,Ruszczynski.06} for details about classical algorithms. 

Smoothing algorithms rely on consistent approximations as they construct global approximations apriori that can be solved using gradient-based algorithms; examples appear in \cite{PeeRoyset2011,Chen.12,BurkeHoheisel.13}. Penalty methods for constrained optimization and sample average approximations for stochastic problems furnish additional examples. For the latter, the global approximations consist of sample averages of expectation functions; see, e.g., \cite{Shapiro2021a}. The analysis of consistent approximation algorithms tends to be agnostic to {\em how} its global approximations are  solved eventually \cite{Royset2023a}, \cite[\S\, 4.C]{Royset_Wets_2021}, \cite[\S\, 3.3]{Polak1997}. While this flexibility can be advantageous, it may leave crucial implementation details about subroutines, warm-starts, and tolerances unspecified. 

Although augmented Lagrangian methods can be viewed as seamlessly coupling a local model (expressed in terms of multipliers) with global approximations in the form of quadratic penalties, there is no general algorithmic framework tightly integrating the two concepts. In this paper, we develop such a framework in the context of nonconvex and nonsmooth, composite optimization, while making separate contributions to both model-based methods and consistent approximation algorithms. The framework allows us to construct and analyze implementable algorithms for a broad class of problems, while retaining significant flexibility for further specialization. 

Given a nonempty and closed set $X\subset \reals^n$, a proper, lower semicontinuous (lsc), and convex function $h:\reals^m\to \Reals = [-\infty,\infty]$, a smooth (i.e., $C^1$) function $f_0:\reals^n\to \reals$, and a locally Lipschitz continuous (lLc) mapping $F:\reals^n\to \reals^m$, we consider the composite optimization problem
\begin{equation}\label{pbm}
\nnmin_{x\in X}\; f_0(x) + h\big(F(x)\big),
\end{equation} 
which we refer to as the {\em actual problem}. A portion of the analysis (Section~\ref{sec:global}) takes place under the stated assumptions, with some moderate restrictions (mainly convexity of $X$ and $f_0$) entering in later sections to ensure implementable operations in the algorithms. 

Many applications result in the form \eqref{pbm}; see, e.g., \cite{AravkinBurkePillonetto.14,CuiPangSen.18,DuchiRuan.19,DavisDrusvyatskiyPaquette.20,CharisopoulosDavisDiazDrusvyatskiy.21,ByundeOliveiraRoyset.23,Ferris_Huber_Royset_2024,Royset.25,Sempere_Oliveira_Royset_2025}. Applications include phase retrieval, robust estimation, Kalman smoothing, sensing, risk management, and engineering design. The form covers nonlinear programming, goal programming, difference-of-convex (dc) optimization, and convex-composite optimization. It extends beyond optimization to variational inequalities because these can be written as nonsmooth optimization problems \cite{Ferris_Huber_Royset_2024,Royset.25}. The actual problem \eqref{pbm} brings forth convexity, smoothness, and Lipschitz properties, which are keys to variational analysis and efficient algorithms. Algebraic modeling languages also leverage such properties as they provide means to describe optimization models in a format readable by solvers \cite{Ferris_Huber_Royset_2024}. The importance of the composite form $h\circ F$, with $h$ convex and $F$ smooth, was recognized already in \cite{Powell.78a,Powell.78b}. 

Two factors hamper the development of implementable algorithms for the actual problem: the extended real-valuedness of $h$, which is essential for modeling nonlinear constraints, and the nonsmoothness and nonconvexity of the component functions of $F$. If $h$ is real-valued and convex and $F$ is twice smooth (i.e., $C^2$), then there are several solution approaches including proximal composite methods that linearize $F$ and solve regularized convex master problems. These methods can be traced back to \cite{Fletcher1982,Powell1983,Burke1985}; see also \cite{Powell1984,Yuan1985,BurkeFerris1995} for trust-region versions and \cite{Drusvyatskiy2019} for details about rates of convergence under the assumption that $h$ and $\nabla F$ are Lipschitz continuous. Under the additional assumption that $h$ is positively homogeneous, a bundle method is also available \cite{Sagastizabal_2013}. When the feasible set is relatively simply, subgradient-type algorithms may also apply; see, for example, \cite{LiCui2025a} for recent advances and unifying analysis. While retaining a twice smooth $F$, \cite{LewisWright.16} allows for extended real-valued $h$ (which actually does not have to be convex either; subdifferential regularity and prox-boundedness suffice) but the resulting algorithms appear challenging to implement for certain $h$. Other efforts involving more general composite problems such as those defined by the broad class of quasi-difference-convex functions include \cite{PangPeng2025}. 

Chapter 7 in \cite{Cui_Pang_2022} puts forth the broad framework of surrogation, which is a general approach for constructing local, upper bounding models that in turn are solved using existing (convex) optimization algorithms. This is also the setting in \cite{Syrtseva_2024} and \cite{Sempere_Oliveira_Royset_2025}. With  focus on local models, such a framework does not inherently involve global approximations. A step in that direction is provided by \cite{Liu2022a}, which replaces nested expectation functions with sample averages. Further coupling of local models and global approximations appears in \cite{LiCui2025b}, while considering a composition of a sum of univariate extended real-valued functions with an inner function, possibly not lLc, and establishing epi-convergence of the approximations. 

The general framework of consistent approximations put forth in \cite{Royset2023a} conceptually handles extended real-valued, proper, lsc, and convex $h$ by considering real-valued approximations. However, it implicitly assumes that multipliers produced by an algorithm have cluster points, but this cannot be taken for granted when $h$ is extended real-valued. Thus, the earlier effort does not fully address nonlinearly constrained problems. Implementation details are also limited to one specific proximal composite method for solving global approximations, while relying on twice smooth approximations of $F$ and $f_0$ being absent.

In this paper, we construct a framework that combines local models and global approximations to produce implementable algorithms for \eqref{pbm} under relaxed assumptions. The global approximations amount to replacing $h$ by another proper, lsc, and convex function $h^\nu:\reals^m\to \Reals$, $F$ by another lLc mapping $F^\nu:\reals^n\to \reals^m$, and $f_0$ by another smooth function $f_0^\nu:\reals^n\to \reals$. When $X$ is  convex, it can be replaced by a convex set $X^\nu\subset\reals^n$. This produces the \emph{approximating problems} for $\nu\in \nats = \{1, 2, \dots\}$:
\begin{equation}\label{pbm-a}
\nnmin_{x\in X^\nu}\; f_0^\nu(x) + h^\nu\big(F^\nu(x)\big),
\end{equation}
which presumably have computational advantages over the actual problem \eqref{pbm}. For example, $h^\nu$ could be a continuous or smooth approximation of $h$ and $F^\nu$ might be a smooth approximation of $F$; see \cite{ErmolievNorkinWets.95,ChenMangasarian.95,Chen.12,BurkeHoheisel.13,BurkeHoheisel.17} and below for concrete examples of smoothing techniques. The set $X^\nu$ could be a polyhedral approximation of $X$ and $f_0^\nu$ might represent vanishing regularization. 

As a first contribution, we show that any cluster point of a sequence of near-stationary points of the approximating problems \eqref{pbm-a}, as $\nu\to \infty$, must be a stationary point of the actual problem \eqref{pbm} under a qualification. In contrast to \cite{Royset2023a}, we include functions $f_0,f_0^\nu$ and dispose of an assumption of bounded multipliers. Thus, the framework handles nonlinearly constrained problems by letting $h$ assign infinite penalties at infeasible points. We also provide additional flexibility in the construction of optimality conditions defining stationarity for the problems. 

As a second contribution, we propose implementable algorithms based on local models for computing near-stationary points of an approximating problem \eqref{pbm-a}. While we assume that $X^\nu$ and $f_0^\nu$ are convex, the approximating problems tend to remain nonsmooth and nonconvex making local models essential. We construct local models of $F^\nu$ through linearization, which produces convex master problems, and more refined local models of $h^\nu$ to reduce the computational cost of solving these master problems. We extend the analysis of the composite bundle algorithm of \cite{Sagastizabal_2013} by neither assuming a positively homogeneous $h^\nu$ nor a twice smooth $F^\nu$; it suffices for $F^\nu$ to be $L$-smooth. We allow for convex $X^\nu$, convex and smooth $f_0^\nu$, and more general local models for $h^\nu$, which represent further extensions beyond \cite{Sagastizabal_2013}. In particular, we address situations where $h^\nu$ is in of itself a composite function of the form $h^\nu = h_0^\nu \circ H^\nu$ as motivated by \cite{Ferris_Huber_Royset_2024}. This offers opportunities for constructing specialized local models. 

These contributions allow us to analyze {\em double-loop algorithms} for the actual problem \eqref{pbm} of the form: 
\begin{align}
&\text{For each $\nu= 0, 1, \dots$ do:}\nonumber\\ 
& ~~~~\text{Compute a near-stationary point of \eqref{pbm-a} by iterating:}\nonumber\\
& ~~~~\text{For each $k = 0, 1, \dots$ do:}\nonumber\\
& ~~~~~~~~\text{Construct local models $h_k^\nu, F_k^\nu$ of $h^\nu, F^\nu$ at a current point $x_k^\nu$ and solve, with $t_k>0$,}\nonumber\\ 
& ~~~~~~~~~~~~~\nnmin_{x\in X^\nu} f_0^\nu(x) + h_k^\nu\big(F_k^\nu(x)\big) + \tfrac{1}{2t_k} \|x - x_k^\nu\|_2^2\label{eqn:subprobl}\\
& ~~~~~~~~\text{to produce } x_{k+1}^\nu.\nonumber 
\end{align}
While postponing several details and minor adjustments, the description summarizes the goal of the paper: to find stationary points of the actual problem \eqref{pbm} by solving a sequence of simple master problems \eqref{eqn:subprobl}. In turn, \eqref{eqn:subprobl} leverages gradually better global approximations (increasing $\nu$) and adaptively constructed local models (increasing $k$).

\begin{table}[h]
\centering
\begin{tabular}{c|c|c|c|c|c}
algo. & $X^\nu$ & $f_0^\nu$ & $h^\nu$ & $F^\nu$ & master problem \eqref{eqn:subprobl} \\
\hline
\ref{bundlemethod}  & cl cvx & smooth cvx & cvx Lip           & smooth, Lip grad    & cvx; model of $h$, linearized $F$ \\
\ref{algo-dc}       & cl cvx & smooth cvx & cvx real nondecr  & real dc& cvx; linearize one cvx fcn \\
\ref{algo-dist2set} & cl cvx & smooth cvx & weighted sum      & lLc, dist$^2$       & cvx $+$ nonconvex prj \\
\hline
\end{tabular}
\caption{Assumptions and master problems for three settings corresponding to Algorithms~\ref{bundlemethod}, \ref{algo-dc}, and \ref{algo-dist2set} developed in detail. (cl=closed, cvx=convex, Lip=Lipschitz, real=real-valued, nondecr=nondecreasing, grad=gradient, dc=difference-of-convex, dist$^2$=squared point-to-set distance, fcn=function, prj=projection.)}\label{tab:overview}
\end{table}

As a third contribution, we discuss how double-loop algorithms can be implemented in three settings with different approximating problems, which in turn can come from numerous actual problems. Table~\ref{tab:overview} summarizes the three settings and provides both a road map for the paper and an indication of the breadth of the framework.   

The paper starts in Section~\ref{sec:examples} with three examples. Section~\ref{sec:global} defines stationary points for \eqref{pbm} and \eqref{pbm-a}, and establishes how those of the latter may converge to those of the former. This addresses the outer loop over $\nu$ in the double-loop algorithms. Section~\ref{sec:bundle} focuses on how to compute near-stationary points of an approximating problem \eqref{pbm-a} using local models. It addresses the inner loop over $k$ and results in Algorithm~\ref{bundlemethod}; see Table~\ref{tab:overview}.  Section~\ref{sec:dc} discusses two other approaches for solving the approximating problems \eqref{pbm-a} and produces Algorithms~\ref{algo-dc} and~\ref{algo-dist2set} for the settings summarized in Table~\ref{tab:overview}. An appendix furnishes additional proofs and details.

\medskip

\noindent {\bf Notation and Terminology}. A function $f:\reals^n\to \Reals$ has a {\em domain} $\dom f = \{x\in \reals^n\,|\,f(x)<\infty\}$ and an {\em epigraph} $\epi f = \{(x,\alpha)\in \reals^{n+1}\,|\,f(x)\leq \alpha\}$. It is {\em lower semicontinuous} (lsc) if $\epi f$ is a closed subset of $\reals^n\times\reals$ and it is {\em convex} if $\epi f$ is convex. It is {\em proper} if $\epi f$ is nonempty and $f(x)>-\infty$ for all $x\in \reals^n$. It is {\it locally Lipschitz continuous} (lLc) at $\bar x$ when there are $\delta \in (0,\infty)$ and $\kappa \in [0,\infty)$ such that $|f(x) - f(x')| \leq \kappa\|x-x'\|_2$ whenever $\|x-\bar x\|_2\leq \delta$ and $\|x'-\bar x\|_2\leq \delta$. If $f$ is lLc at every $\bar x\in\reals^n$, then $f$ is lLc. A mapping $F:\reals^n\to \reals^m$ is lLc (at $\bar x$) if its component functions are lLc (at $\bar x$). Functions and mappings are  {\em (twice) smooth} (at $\bar x$) if they are (twice) continuously differentiable (at $\bar x$). The {\em indicator function} of a set $C\subset\reals^n$ is given by $\iota_C(x) = 0$ if $x\in C$ and $\iota_C(x) = \infty$ otherwise. The {\em convex hull} of a set $C$ is denoted by $\con C$. For $C\subset\reals^n$ and $\bar x\in \reals^n$, the {\em point-to-set distance} is denoted by $\dist(\bar x, C)= \inf_{x\in C} \|x-\bar x\|_2$. A function $f:\reals^n\to \Reals$ is {\em nondecreasing} if $f(x)\leq f(x')$ whenever $x\leq x'$, which always is interpreted componentwise.

The collection of subsequences of $\nats=\{1, 2, \dots\}$ is denoted by  $\cN_\infty^\grill$, with convergence of $\{x^\nu, \nu\in\nats\}$ to $x$ along a subsequence $N\in \cN_\infty^\grill$ being denoted by $x^\nu \Nto x$. The symbols $\exists$ and $\forall$ mean ``there exist'' and ``for all,'' respectively. The {\em inner limit} of a sequence of sets $\{C^\nu\subset\reals^n, \nu\in\nats\}$ is $\nInnLim C^\nu = \{x\in \reals^n~|~\exists x^\nu \in C^\nu \to x\}$. The {\em outer limit} is $\nOutLim C^\nu = \{x\in \reals^n~|~\exists N\in \cN_\infty^\grill \mbox{ and } x^\nu \in C^\nu \Nto x\}$. Thus, $C^\nu$ {\em set-converges} to $C\subset\reals^n$, denoted by $C^\nu \sto C$, if $\nOutLim C^\nu \subset C \subset \nInnLim C^\nu$. The functions $\{f^\nu:\reals^n\to \Reals, \nu\in\nats\}$ {\em epi-converge} to $f:\reals^n\to \Reals$, denoted by $f^\nu \eto f$, if $\epi f^\nu \sto \epi f$. 

A set-valued mapping $T:\reals^n\tto \reals^m$ has subsets of $\reals^m$ as its ``values'' and a {\em graph} written as $\gph T = \{(x,y)\in \reals^{n+m}\,|\,y\in T(x)\}$. A sequence of set-valued mappings $\{T^\nu:\reals^n\tto \reals^m, \nu\in\nats\}$ {\em converges graphically} to $T:\reals^n\tto\reals^m$, denoted by $T^\nu \gto T$, if $\gph T^\nu \sto \gph T$. 

The {\em regular normal cone} to $C\subset\reals^n$ at $x\in C$ is denoted by $\widehat N_C(x)$ and consists of those $v\in \reals^n$ satisfying $\langle v, x'-x\rangle \leq o(\|x'-x\|_2)$ for $x'\in C$. The {\em normal cone} to $C$ at $x\in C$ is $N_C(x) = \{v\in\reals^n\,|\,\exists v^\nu\to v, x^\nu\in C\to x \text{ with } v^\nu\in \widehat N_C(x^\nu)\}$. These cones are empty when $x\not\in C$. The set $C$ is {\em Clarke regular} at $x\in C$ if $N_C(x) = \widehat N_C(x)$ and $C$ is closed in a neighborhood of $x$. 

The {\em subdifferential} of $f:\reals^n\to \Reals$ at a point $x$ with $f(x)\in\reals$ is $\partial f(x) = \{v\in\reals^n\,|\,(v,-1)\in N_{\epi f}(x,f(x))\}$ and, when $f$ is lsc, the {\em horizon subdifferential} $\partial^\infty f(x) = \{v\in\reals^n\,|\,(v,0)\in N_{\epi f}(x,f(x))\}$. A convexification is sometimes convenient and we write $\bar\partial f(x) = \con \partial f(x)$. When $f$ is lLc, then $\bar\partial f(x)$ is the {\em Clarke subdifferential} of $f$ at $x$. If $\epi f$ is Clarke regular at $(x,f(x))$, then $f$ is {\em epi-regular} at $x$ and $\partial f(x)$ is convex. If $f$ is convex and $\epsilon\geq 0$, then its $\epsilon$-{\em subdifferential} at $x$ is  $\partial_\epsilon f(x) = \{v\in\reals^n\,|\,f(x') \geq f(x) + \langle v, x'-x\rangle -\epsilon\}$.

\section{Motivating Examples}\label{sec:examples}

The actual problem \eqref{pbm} arises in many contexts including reliability engineering, optimization under uncertainty, and management of energy storage. We give details in three specific cases.

\begin{example}{\rm (buffered failure probability constraints).}\label{ex:buffered}
Let $\psi_{ik}:\reals^{n}\to \reals$, $i=1, \dots, s$, $k=1, \dots, r$, be smooth functions defining the performance of an engineering system. For $x\in\reals^{n}$, $\psi_{ik}(x)$ is the performance of the $k$th component of the system under design (decision) $x$ in scenario $i$. We assume that these functions are ``normalized'' so that positive values of $\psi_{ik}(x)$ represent failure of component $k$ and nonpositive values are deemed acceptable. The overall system performance in scenario $i$ is given by
\[
\psi_i(x) = \max_{j=1, \dots, q} \min_{k \in \mathbb{K}_j} \psi_{ik}(x),
\]
where $\mathbb{K}_j \subset \{1, \dots, r\}$, $j = 1, \dots, q$ is a collection of ``cut sets.'' If all the components in any cut set fail, i.e., $\psi_{ik}(x) > 0$ for all $k\in \mathbb{K}_j$ and some $j$, then the system has failed (in scenario $i$) as indicated by $\psi_i(x) > 0$; see, e.g., \cite[\S\, 6.G]{Royset_Wets_2021} and \cite{ByundeOliveiraRoyset.23}.

We assume that the probability of scenario $i$ is $p_i>0$. Given $\alpha \in (0,1)$, we seek to minimize a smooth function $f_0:\reals^{n}\to \reals$ over a nonempty closed set $X\subset \reals^{n}$ subject to the buffered failure probability of the system being no greater than $1-\alpha$; consult \cite{RockafellarRoyset.10,Royset2025}, and \cite[\S\, 3.E]{Royset_Wets_2021} for background about such formulations. These sources specify that the buffered failure probability constraint is equivalently stated in terms of a superquantile/CVaR constraint, which in turn results in the constraint 
\[
\max_{\pi \in P_\alpha}  \sum_{i = 1}^s \pi_i \psi_i(x)  \leq 0,
\quad \mbox{with} \quad
P_\alpha=\bigg\{\pi\in \Re^s~\bigg|~\sum_{i=1}^s \pi_i=1,~0\leq \pi_i\leq\frac{ p_i}{1-\alpha}\bigg\}.
\] 
The problem of minimizing $f_0$ over $X$ subject to the buffered failure probability constraint therefore becomes
\[
\nnmin_{x\in X} ~ f_0(x) ~\mbox{ subject to } ~ \max_{\pi \in P_\alpha}  \sum_{i = 1}^s \pi_i \max_{j=1, \dots, q} \min_{k \in \mathbb{K}_j} \psi_{ik}(x)  \leq 0,
\]
which is of the form \eqref{pbm} with
\[
h(u) = \iota_{(-\infty,0]}\bigg(\max_{\pi \in P_\alpha}  \sum_{i = 1}^s \pi_i \max_{j=1, \dots, q} u_{ij}\bigg), ~~ u = ( u_{11}, \dots, u_{sq}),
\]
defining a proper, lsc, and convex function, and the lLc mapping $F:\reals^{n} \to \reals^{sq}$ given by $F(x) = (\min_{k \in \mathbb{K}_1} \psi_{1k}(x), \dots, \min_{k \in \mathbb{K}_q} \psi_{sk}(x))$. Approximations $h^\nu$ and $F^\nu$ may take the form 
\[
h^\nu(u)= \rho^\nu\max\bigg\{0,\;\max_{\pi \in P_\alpha}  \sum_{i = 1}^s \pi_i \max_{j=1, \dots, q} u_{ij}\bigg\}\;\mbox{ and }\; 
F^\nu(x)=\big(\texttt{LSE}^\nu_{11}(x), \dots, \texttt{LSE}^\nu_{sq}(x)\big),
\]
where $\rho^\nu>0$ is a penalty parameter and 
\[
\mathtt{LSE}^\nu_{ij}(x)=-\frac{\eta^\nu}{\ln |\mathbb{K}_j|}\ln\Bigg(\sum_{k\in \mathbb{K}_j} \exp\Big(-\frac{\ln |\mathbb{K}_j|}{\eta^\nu} \psi_{ik}(x)\Big)\Bigg)
\]
is the standard \emph{LogSumExp} function (see, e.g., \cite[Examples 4.16, 6.43, and 7.43]{Royset_Wets_2021}) parameterized by $\eta^\nu>0$. Here, $|\mathbb{K}_j|$ is the cardinality of $\mathbb{K}_j$.These global approximations furnish the approximating problems~\eqref{pbm-a}, whose structure is compatible with local models to complete the construction of a double-loop algorithm.\mybox
\end{example}

\begin{example}{\rm (distributionally robust two-stage nonconvex programs).}\label{ex:ncvxSP} For a nonempty closed set $X \subset \Re^n$, a smooth function $f_0:\reals^n\to \reals$ representing the first-stage cost, and a compact set $\mathcal{P}$ of probability vectors $p = (p_1, \dots, p_s)$, we consider the problem
\[
\nnmin_{x\in X}  f_0(x) + \max_{p\in \mathcal{P}} \displaystyle \sum_{i=1}^s p_i f_i(x), ~~\text{where} ~~
f_i(x) = \inf_{y \in Y_i}\big\{ \phi_i(x,y)~\big|~ \psi_{i}(x,y)\leq 0 \big\}
\]
is defined in terms of a nonempty, convex, and compact set $Y_i \subset \Re^{m}$, a second-stage cost function $\phi_i:\Re^n\times \Re^m \to \Re$, assumed to be smooth, and a second-stage constraint mapping $\psi_{i}:\Re^n\times \Re^m \to \Re^{q}$, also assumed to be smooth. If $\phi_i(x,\cdot)$ and $\psi_{i}(x,\cdot)$ are convex for each $x$ and a qualification holds, then $f_i$ is lLc; see Example~\ref{ex:ncvxSP2} for details. 
The problem is then of the form \eqref{pbm} with $h(u)=\max_{p\in \mathcal{P}} \sum_{i=1}^s p_i u_i$,  $u=(u_1,\ldots,u_s)$, which is real-valued and convex, and $F:\Re^n \to \Re^{s}$ given by $F(x) = (f_1(x),\ldots,f_s(x))$ is lLc but possibly nonsmooth.  
With $\eta^\nu>0$, we achieve a smooth approximation of $F$ under additional assumptions by setting 
\[
f^\nu_i(x)= \inf_{y \in Y_i} \big\{ \phi_i(x,y) + \tfrac{1}{2}\eta^\nu\norm{y}^2~\big|~\psi_{i}(x,y)\leq 0\big\}
\]
and $F^\nu(x)=(f_1^\nu(x),\ldots,f_s^\nu(x))$; see Example~\ref{ex:ncvxSP2}. These global approximations can then be paired with local models.\mybox
\end{example}

\begin{example}{\rm (minimization under distance-to-set penalties).}\label{example:dist2set}
For a nonempty closed set $X\subset \Re^n$ and smooth functions $f_i\fc{\Re^n}{\Re}$, $i=0, 1, \dots, m$, we seek to minimize $f_0$ over the ``hard constraints'' $x\in X, f_1(x) \leq 0, \dots, f_m(x) \leq 0$, while accounting for the preference that $x$ should be in the closed sets $K_1, \ldots, K_{q}\subset\reals^n$. While these sets are abstract and possibly nonconvex, we assume that the projection on $K_i$ is implementable in practice, i.e., there is an efficient algorithm for computing $x\in \proj_{K_i}(\bar x) = \argmin_{x'\in K_i} \|x'-\bar x\|_2$ for $\bar x\in \reals^n$. Interpreting the requirements about $K_1, \dots, K_q$ as ``soft constraints'' and using penalty coefficients $\rho_1, \dots, \rho_q$, we obtain 
\[
\nnmin_{x \in X} \; f_0(x)+ \sum_{i=1}^{q} \frac{\rho_i}{2} \dist^2(x,K_i)\quad \mbox{subject to}\quad f_i(x) \leq 0,   ~~i=1, \dots, m,
\]
where $\dist^2(x,K_i) = (\dist(x,K_i))^2$. A mixture of hard and soft constraints are common in models of energy systems such as those related to battery energy storage systems, where battery health and charge-discharge cycles may enter as soft constraints $x \in K_i$ \cite{Dhekra_Oliveira_Pflaum_2020}. Demand satisfaction, physical restrictions, and contractual requirements tend to enter as hard constraints. The formulation is of the form~\eqref{pbm} with
\[
h(z)= \sum_{i=1}^{q} \frac{\rho_i}{2} z_i + \sum_{i=1}^m \iota_{(-\infty,0]}(z_{q+i})\;\mbox{ and }\;
F(x)=\big(\dist^2(x,K_1),\ldots,\dist^2(x,K_{q}), f_1(x), \dots, f_m(x)\big).
\]
Since $\dist^2(\cdot,K_i)$ can be written as a real-valued dc function \cite[Proposition 4.13]{WWBook} (see~\eqref{eq:dist2set-dc} below), it is lLc and $F$ is therefore lLc. An approximation of $h$ might be given by $h^\nu(z)= \sum_{i=1}^{q} \frac{\rho_i}{2} z_i + \sum_{i=1}^m \eta_i^\nu \max\{0, z_{q+i})$ for positive penalty parameters $\eta_i^\nu$. Again, these global approximations can be supplemented with local models.\mybox
\end{example}

\section{Optimality Conditions and Global Approximations}\label{sec:global}

This section establishes that cluster points of near-stationary points of the approximating problems \eqref{pbm-a} are stationary points of the actual problem \eqref{pbm} under mild assumptions. The results furnish a justification for the outer portion of double-loop algorithms, but are also of independent interest. 

Throughout, we retain flexibility by defining stationarity for the actual problem \eqref{pbm} and the approximating problems \eqref{pbm-a} in terms of the set-valued mappings $S,S^\nu:\reals^n\times\reals^m\times\reals^m \tto  \reals^m \times \reals^m\times \reals^n$ expressed as:  
\begin{align}
S(x,y,z) & = \big\{F(x) - z\big\}  \times \big( \partial h(z)- y\big) \times  \Big( \nabla f_0(x) + \sum_{i=1}^m y_i \con D_i(x) + N_X(x)\Big)\label{eqn:S}\\
S^\nu(x,y,z) & = \big\{F^\nu(x) - z\big\}  \times \big( \partial h^\nu(z)- y\big) \times  \Big( \nabla f_0^\nu(x) + \sum_{i=1}^m y_i \con D_i^\nu(x) + N_{X^\nu}(x)\Big),\label{eqn:Snu}
\end{align}
where $f_1, \dots, f_m$ and $f_1^\nu, \dots, f_m^\nu$ are the component functions of $F$ and $F^\nu$, respectively. The set-valued mappings $D_i,D^\nu_i:\reals^n\tto\reals^n$ capture first-order differential properties of $f_i$ and $f_i^\nu$, respectively, but are left unspecified to allow for a multitude of possibilities. For example, if these functions are smooth, then the most natural choice would be $D_i(x) = \{\nabla f_i(x)\}$ and $D_i^\nu(x) = \{\nabla f_i^\nu(x)\}$. If $f_i$ is merely lLc, then $D_i(x) = \partial f_i(x)$ is reasonable. Another possibility arises when $f_i = f_i^1 - f_i^2$ for real-valued convex functions $f_i^1$ and $f_i^2$, i.e., $f_i$ is a dc function. Then, $D_i(x) = \partial f_i^1(x) - \partial f_i^2(x)$ or $D_i(x) = \partial f_i^1(x) - \partial_\epsilon f_i^2(x)$ might be suitable.     

We say that the {\em basic qualification} holds at $x$ if: 
\begin{equation}\label{qualification}
y\in N_{\dom h}\big(F(x)\big) ~~ \mbox{ and }~ ~  0 \in \sum_{i=1}^m y_i \con D_i(x) + N_X(x)~~~\Longrightarrow~~~ y=0.
\end{equation}

The basic qualification holds, for example, when $h$ is real-valued and reduces to the ``transversality condition'' in \cite[equation (4.3)]{LewisWright.16} when $X=\reals^n$, $f_1, \dots, f_m$ are smooth, and $D_1 = \{\nabla f_1\}, \dots, D_m = \{\nabla f_m\}$; see also \cite[Example 10.8]{Rockafellar_Wets_1998}. 

The following necessary condition extends \cite{Royset2023a} by allowing for general set-valued mappings $D_1, \dots, D_m$ and also including a function $f_0$; we provide a proof in the appendix. 

\begin{proposition}{\rm (optimality condition).}\label{prop:optimality} For a nonempty, closed set $X\subset\reals^n$, a proper, lsc, and convex function $h:\reals^m\to \Reals$, a smooth function $f_0:\reals^n\to \reals$, and a lLc mapping $F:\reals^n\to \reals^m$, with component functions $f_1, \dots, f_m$, suppose that the basic qualification \eqref{qualification} holds at $x^\star$ and the set-valued mappings $D_1, \dots, D_m:\reals^n\tto \reals^n$ satisfy $\partial f_i(x^\star) \subset D_i(x^\star)$ for $i = 1, \dots, m$.

If $x^\star$ is a local minimizer of $\iota_X + f_0 + h \circ F$, then there exist $y^\star\in\reals^m$ and $z^\star\in \reals^m$ such that 
\[
0 \in S(x^\star, y^\star, z^\star),
\]
with this optimality condition being equivalent to the Fermat rule 
\[
0 \in \partial(\iota_X + f_0 + h \circ F)(x^\star)
\]
under the additional assumptions that $X$ is Clarke regular at $x^\star$ and, for each $y\in \partial h(F(x^\star))$ and $i = 1, \dots, m$, the function $y_i f_i$ is epi-regular at $x^\star$ and $D_i(x^\star) = \partial f_i(x^\star)$.
\end{proposition}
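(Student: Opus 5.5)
The plan is to pass through the chain rule for $h\circ F$ with $F$ merely lLc and then add the remaining terms by the sum rule. Set $\bar z = F(x^\star)$, which is in $\dom h$ because $x^\star$ is a local minimizer and the objective is finite there.

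The first step is Fermat's rule, $0\in\partial(\iota_X + f_0 + h\circ F)(x^\star)$, as in \cite[Theorem 10.1]{Rockafellar_Wets_1998}. Since $f_0$ is smooth, the smooth sum rule gives
\[
0\in \nabla f_0(x^\star) + \partial(\iota_X + h\circ F)(x^\star).
\]
The second step treats $\iota_X + h\circ F$ as the composition $g\circ G$ with $G(x) = (x,F(x))$ and $g(x,u) = \iota_X(x) + h(u)$. Here $G$ is lLc and $g$ is proper and lsc. For lLc inner mappings the nonsmooth chain rule \cite[Theorem 10.49]{Rockafellar_Wets_1998} applies under the qualification that the only $(w,y)\in\partial^\infty g(G(x^\star)) = N_X(x^\star)\times N_{\dom h}(\bar z)$ with $0\in\partial\langle (w,y),G\rangle(x^\star)$ is zero. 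It then yields
\[
\partial(g\circ G)(x^\star)\subset \bigcup\big\{\partial\langle (w,y),G\rangle(x^\star) ~\big|~ w\in N_X(x^\star),\ y\in\partial h(\bar z)\big\}.
\]
I use $\partial^\infty h(\bar z) = N_{\dom h}(\bar z)$ since $h$ is convex, and the product rule for the subdifferential of $g$.

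Next I unpack $\partial\langle(w,y),G\rangle(x^\star) = w + \partial(\sum_i y_i f_i)(x^\star)$. The sum rule for lLc functions gives $\partial(\sum_i y_i f_i)\subset\sum_i \partial(y_if_i)$, and $\partial(y_i f_i)\subset y_i\,\con\partial f_i$ holds because $\partial(-f)\subset -\bar\partial f$ for lLc $f$. By hypothesis this is contained in $y_i\con D_i(x^\star)$. The same containments applied to the horizon qualification show that the chain-rule qualification is implied by \eqref{qualification}. Hence there is $y^\star\in\partial h(\bar z)$ with $0\in\nabla f_0(x^\star)+\sum_i y^\star_i\con D_i(x^\star)+N_X(x^\star)$. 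Setting $z^\star=\bar z$ gives $0\in S(x^\star,y^\star,z^\star)$. I expect the main obstacle to be this careful verification: transferring the qualification, and the lack of sign information on $y_i$ when $f_i$ is not regular. The latter is why $\con$ appears.

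For the equivalence, under the extra assumptions I would use the equality version of \cite[Theorem 10.49]{Rockafellar_Wets_1998}. This holds when $g$ is regular, which follows from Clarke regularity of $X$ and convexity of $h$, and when each $\langle (w,y), G\rangle$ is regular at $x^\star$, which follows from the epi-regularity of $y_if_i$. In that case the sum of regular functions has additive subdifferential. Moreover $\con D_i = \con\partial f_i$, and regularity gives $y_i\partial f_i = \partial(y_if_i)$, convex, in the relevant range, so the inclusion chain becomes an equality. Combined with the smooth sum rule for $f_0$, which is exact, this gives that $0\in S(x^\star,y,z)$ for some $y,z$ iff $0\in\partial(\iota_X+f_0+h\circ F)(x^\star)$. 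Here the first component of $S$ forces $z=F(x^\star)$.
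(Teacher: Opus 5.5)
Your proposal is correct and is essentially the paper's argument. The paper obtains the multiplier by citing the nonsmooth Lagrange multiplier rule \cite[Exercise 10.52]{Rockafellar_Wets_1998} rather than by combining Fermat's rule with the Lipschitzian chain rule as you do. It then uses the same bound $\partial(y_if_i)(x^\star)\subset y_i\con\partial f_i(x^\star)\subset y_i\con D_i(x^\star)$ to transfer both the qualification and the conclusion. For the equivalence it uses exactly your augmented composition $x\mapsto(x,F(x))$ with $\iota_X+h$, together with an equality chain rule under epi-regularity \cite[Theorem 6.23]{Royset_Wets_2021}.

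There is one small slip. The identity $\partial(y_if_i)(x^\star)=y_i\partial f_i(x^\star)$ is false for $y_i<0$ when $f_i$ itself is not regular; for example, take $f_i=-|\cdot|$ and $y_i=-1$ at $0$. The identity you actually need, and which holds, is $\partial(y_if_i)(x^\star)=\con\partial(y_if_i)(x^\star)=y_i\con\partial f_i(x^\star)$. This follows from convexity of $\partial(y_if_i)(x^\star)$ under epi-regularity together with $\bar\partial(-f)=-\bar\partial f$ for lLc $f$.
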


We say that $x$ is a {\em stationary point} for \eqref{pbm} if there exist $y\in\reals^m$ and $z\in\reals^m$ such that $0 \in S(x,y,z)$. Parallel optimality conditions expressed by $S^\nu$ follow for the approximating problems \eqref{pbm-a}. The vector $z^\star$ in Proposition~\ref{prop:optimality} appears superfluous; one can simply set $z^\star = F(x^\star)$. However, it plays an important role in the presence of approximations and tolerances. We recall that having a subgradient near the origin is not a suitable stopping criterion because it may never be satisfied for an algorithm applied to, for example, the absolute value function $|\cdot|$. Nevertheless, the absolute value function fits our framework with $X = \reals$, $f_0(x) = 0$, $h = |\cdot|$, $F(x) = f_1(x) = x$, and $D_1(x) = \{\nabla f_1(x)\}$. Then, $\dist(0,S(x,y,z))\leq \epsilon$ for every $x\in [-\epsilon, \epsilon]$ because one can set $y = z = 0$.   

The main result of this section provides a justification for the following conceptual algorithm, stated as Algorithm \ref{algo}, representing the outer loop of the double-loop algorithms for \eqref{pbm}.

\begin{algorithm}[htb]
\caption{Global Approximation Algorithm}
\label{algo}
\begin{algorithmic}[1]
{\footnotesize
\State Data: $\{\epsilon^\nu > 0, X^\nu\subset\reals^n, f_0^\nu:\reals^n\to \reals, h^\nu:\reals^m\to \Reals, F^\nu:\reals^n\to \reals^m, D_i^\nu:\reals^n\tto\reals^n, i=1, \dots, m, \nu\in\nats\}$
\For{$\nu = 1, 2, \dots$}
\State Compute $x^\nu$ such that
\begin{equation}\label{eqn:tol}
\dist\big(0, S^\nu(x^\nu,y^\nu,z^\nu)\big) \leq \epsilon^\nu
\end{equation}
\quad \;\;for some $y^\nu,z^\nu \in \reals^m$
\EndFor
}
  \end{algorithmic}
\end{algorithm}

Instead of prescribed tolerances $\{\epsilon^\nu, \nu\in\nats\}$, Algorithm \ref{algo} can be implemented with other stopping rules as long as $\dist(0, S^\nu(x^\nu,y^\nu,z^\nu))$ vanishes as we see in Sections \ref{sec:bundle} and \ref{sec:dc}. This allows us to merely compute $x^\nu$ in Line 3 and bypass the computation of $y^\nu,z^\nu$ if desirable. Below we describe Algorithms~\ref{bundlemethod}, \ref{algo-dc}, and \ref{algo-dist2set} as concrete approaches of how to implement Line 3. However, the following convergence analysis is agnostic about how Line 3 is accomplished.

\begin{theorem}\label{thm:optim}{\rm (convergence of global approximation algorithm)}. 
For nonempty, closed, and convex sets $X,X^\nu\subset\reals^n$, proper, lsc, and convex functions $h,h^\nu:\reals^m\to \Reals$, smooth functions $f_0,f_0^\nu:\reals^n\to \reals$, lLc mappings $F,F^\nu:\reals^n\to \reals^m$, with component functions $f_1, \dots, f_m$ and $f_1^\nu, \dots, f_m^\nu$, respectively, and set-valued mappings $D_i,D_i^\nu:\reals^n\tto\reals^n$, $i = 1, \dots, m$, suppose that 
\begin{enumerate}[label=(\alph*), start=1]
\item $X^\nu\sto X$

\item $h^\nu\eto h$

\item $f_0^\nu(x^\nu)\to f_0(x)$ and $\nabla f_0^\nu(x^\nu) \to \nabla f_0(x)$ whenever $x^\nu\in X^\nu \to x$

\item for $i = 1, \dots, m$, one has the property: 
\begin{equation}\label{eqn:suffFconv}
\hspace{-0.15cm}  \begin{cases}
    x^\nu \in X^\nu \to x\\
    v^\nu \in D_i^\nu(x^\nu)
  \end{cases}
  \hspace{-0.35cm}\Longrightarrow 
  \begin{cases}
    f_i^\nu(x^\nu) \to f_i(x)\\
    \{v^\nu, \nu\in\nats\} \mbox{ is bounded with all its cluster points in } \con D_i(x).
  \end{cases}
\end{equation}
\end{enumerate}
Then, the following hold:
\begin{enumerate}[label=(\roman*), start=1]

\item  Suppose that $\{(x^\nu,y^\nu,z^\nu), \nu\in\nats\}$ satisfies \eqref{eqn:tol}, with $\{\epsilon^\nu \in [0,\infty), \nu\in\nats\} \to 0$, and $\{x^\nu, \nu\in N\}$ converges to a point $\bar x$ along a subsequence $N\in \cN_\infty^\grill$. Let the basic qualification \eqref{qualification} hold at $\bar x$ if $F(\bar x) \in \dom h$ and let the following qualification hold if $F(\bar x) \not\in \dom h$: 
\begin{equation}\label{qualificationInfeas}
0 \in \sum_{i=1}^m y_i \con D_i(\bar x) + N_X(\bar x)~~~\Longrightarrow~~~ y=0.
\end{equation}
Then, there exist $\bar y\in\reals^m$ and $\bar z\in \reals^n$ such that 
\[
0 \in S(\bar x, \bar y, \bar z).
\]

\item If $\{(x^\nu,y^\nu,z^\nu), \nu\in\nats\}$ satisfies \eqref{eqn:tol}, with $\{\epsilon^\nu \in [0,\infty), \nu\in\nats\} \to 0$, and it has a cluster point $(\bar x, \bar y, \bar z)$, then $0 \in S(\bar x, \bar y, \bar z)$.

\item  If all $F,F^\nu$ are smooth and all $D_i = \{\nabla f_i\}$ and $D^\nu_i= \{\nabla f_i^\nu\}$, then one has: 
\begin{equation}\label{eqn:liminnconseq}
\hspace{-0.2cm}\forall  (\bar x, \bar y, \bar z) \in S^{-1}(0), \exists (x^\nu,y^\nu,z^\nu) \to (\bar x, \bar y, \bar z) \mbox{ and } \epsilon^\nu\to 0 \mbox{ with } \dist\big(0, S^\nu(x^\nu,y^\nu,z^\nu)\big) \leq \epsilon^\nu. 
\end{equation}

\item If $X^\nu = X$, $F^\nu = F$, and $D_i^\nu = D_i$ for all $\nu\in\nats$ and $i = 1, \dots, m$, then \eqref{eqn:liminnconseq} holds.  
\end{enumerate}
If $X^\nu = X$ for all $\nu\in \nats$, then (i)-(iv) hold without the convexity assumption on $X$.   
 
\end{theorem}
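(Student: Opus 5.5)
The plan is to unpack the tolerance condition \eqref{eqn:tol} into three residuals and pass to the limit. The two tools are Attouch's theorem, applied to $h^\nu$ and to the indicators of $X^\nu$, and a normalization argument when the multipliers are unbounded.

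By \eqref{eqn:tol}, for $\nu\in N$ we can write:
\begin{itemize}
\item $a^\nu = F^\nu(x^\nu)-z^\nu$;
\item $u^\nu \in \partial h^\nu(z^\nu)$ with $b^\nu = u^\nu - y^\nu$;
\item $v_i^\nu \in \con D_i^\nu(x^\nu)$ and $w^\nu\in N_{X^\nu}(x^\nu)$ with $c^\nu = \nabla f_0^\nu(x^\nu) + \sum_i y_i^\nu v_i^\nu + w^\nu$;
\end{itemize}
where $\|a^\nu\|_2,\|b^\nu\|_2,\|c^\nu\|_2\le \epsilon^\nu\to 0$. Since $x^\nu\in X^\nu$, (d) gives $z^\nu\to F(\bar x)$. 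By Carath\'eodory each $v_i^\nu$ is a convex combination of at most $n+1$ elements of $D_i^\nu(x^\nu)$. Each such element is bounded by (d), and passing to further subsequences shows that $\{v_i^\nu\}$ is bounded with cluster points in $\con D_i(\bar x)$. By (c), $\nabla f_0^\nu(x^\nu)\to \nabla f_0(\bar x)$. Because $X^\nu,X$ are convex and $\iota_{X^\nu}\eto \iota_X$ by (a), Attouch's theorem gives $N_{X^\nu}\gto N_X$. Likewise (b) gives $\partial h^\nu \gto \partial h$. When $X^\nu = X$, the first of these is replaced by outer semicontinuity of $N_X$, which needs no convexity; this is the source of the final remark in the statement.

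\emph{Proof of (i).} There are two cases.
\begin{itemize}
\item \emph{$\{y^\nu\}_{\nu\in N}$ bounded.} Pass to a subsequence with $y^\nu\to\bar y$. Then $u^\nu\to \bar y$, so $\bar y\in\partial h(F(\bar x))$ by graphical convergence; in particular $F(\bar x)\in\dom h$. Moreover $w^\nu = c^\nu - \nabla f_0^\nu(x^\nu) - \sum_i y_i^\nu v_i^\nu$ is bounded, and its cluster points lie in $N_X(\bar x)$. Taking limits yields $0\in S(\bar x,\bar y,F(\bar x))$.
\item \emph{$\{y^\nu\}_{\nu\in N}$ unbounded.} Divide by $\|y^\nu\|_2$ and extract $y^\nu/\|y^\nu\|_2\to\hat y\neq 0$; the same normalized limit holds for $u^\nu$. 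The third residual gives $0\in\sum_i \hat y_i\con D_i(\bar x)+N_X(\bar x)$, since normal cones are cones and closed under these limits. If $F(\bar x)\notin\dom h$, this contradicts \eqref{qualificationInfeas}. If $F(\bar x)\in \dom h$, we show $\hat y\in N_{\dom h}(F(\bar x))$, which contradicts \eqref{qualification}. Fix $z'\in\dom h$ and take a recovery sequence $z'^\nu\to z'$ with $h^\nu(z'^\nu)\to h(z')$. The subgradient inequality gives $h^\nu(z'^\nu)\ge h^\nu(z^\nu)+\langle u^\nu, z'^\nu - z^\nu\rangle$. Epi-convergence gives $\nliminf h^\nu(z^\nu)\ge h(F(\bar x))>-\infty$. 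Dividing by $\|y^\nu\|_2$ and letting $\nu\to\infty$ yields $\langle \hat y, z'-F(\bar x)\rangle\le 0$.
\end{itemize}
So the second case is impossible and (i) follows.

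\emph{Proof of (ii).} This is the bounded case above with the given cluster point: $\bar z = F(\bar x)$ is forced by $a^\nu\to0$.

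\emph{Proof of (iii).} Fix $(\bar x,\bar y,\bar z)\in S^{-1}(0)$, so $\bar z=F(\bar x)$, $\bar y\in\partial h(\bar z)$ and $\bar w := -\nabla f_0(\bar x)-\sum_i\bar y_i\nabla f_i(\bar x)\in N_X(\bar x)$.
\begin{itemize}
\item By $N_{X^\nu}\gto N_X$, pick $(x^\nu,w^\nu)\to(\bar x,\bar w)$ with $w^\nu\in N_{X^\nu}(x^\nu)$.
\item By $\partial h^\nu\gto\partial h$, pick $(z^\nu,u^\nu)\to(\bar z,\bar y)$ with $u^\nu\in\partial h^\nu(z^\nu)$.
\item Set $y^\nu=\bar y$.
\end{itemize}
Then the residuals are $F^\nu(x^\nu)-z^\nu$, $u^\nu-\bar y$, and $\nabla f_0^\nu(x^\nu)+\sum_i\bar y_i\nabla f_i^\nu(x^\nu)+w^\nu$. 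All three tend to $0$ by (c) and (d), since the cluster points of $\nabla f_i^\nu(x^\nu)$ lie in $\{\nabla f_i(\bar x)\}$. Take $\epsilon^\nu$ to be their sum.

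\emph{Proof of (iv).} Take $x^\nu=\bar x$ and $w^\nu=\bar w$. Choose elements $v_i\in\con D_i(\bar x)$ realizing the stationarity condition, and keep them fixed for all $\nu$. The same argument then applies without any convergence of $D_i^\nu$, using only (b) and (c).

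\emph{Main obstacle.} The delicate step is the unbounded-multiplier case in (i) when $F(\bar x)\in\dom h$: showing that normalized subgradients of the approximations $h^\nu$ produce a normal to $\dom h$. The recovery-sequence and subgradient-inequality argument above is the route I would take first. It needs care that $h^\nu(z^\nu)$ stays bounded below, which follows from the liminf part of epi-convergence.
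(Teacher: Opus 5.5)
Your proposal is correct and follows essentially the same route as the paper. The shared ingredients are: Attouch's theorem for $\partial h^\nu$ and $N_{X^\nu}$ (outer semicontinuity of $N_X$ when $X^\nu=X$); Carath\'eodory combined with (d) for the $\con D_i^\nu$ terms; normalization of unbounded multipliers, with a recovery-sequence/subgradient-inequality argument producing a nonzero element of $N_{\dom h}(F(\bar x))$; and the same explicit constructions for (iii)--(iv). The only difference is packaging. The paper proves (ii) first via $\nOutLim(\gph S^\nu)\subset\gph S$, derives the bounded case of (i) from it, and obtains (iii)--(iv) via $\nInnLim(\gph S^\nu)\supset\gph S$, whereas you pass to the limit in the three residuals directly.
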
  
 \begin{proof} For (ii), we start by establishing the property $\nOutLim (\gph S^\nu) \subset \gph S$. Let $(\bar x, \bar y, \bar z, \bar u, \bar v, \bar w)$ $\in$ $\nOutLim (\gph S^\nu)$.  Then, there are $N\in \cN_\infty^\grill$, $\bar x^\nu\Nto \bar x$, $\bar y^\nu\Nto \bar y$, $\bar z^\nu\Nto \bar z$, $\bar u^\nu\Nto \bar u$, $\bar v^\nu\Nto \bar v$, and $\bar w^\nu\Nto \bar w$ with
$(\bar x^\nu, \bar y^\nu, \bar z^\nu, \bar u^\nu, \bar v^\nu, \bar w^\nu)$ $\in$ $\gph S^\nu$. Consequently,
\[
\bar u^\nu = F^\nu(\bar x^\nu) - \bar z^\nu, ~~~ \bar v^\nu \in \partial h^\nu(\bar z^\nu)- \bar y^\nu, ~~~ \bar w^\nu \in \nabla f_0^\nu(\bar x^\nu) + \sum_{i=1}^m \bar y_i^\nu \con D_i^\nu(\bar x^\nu) + N_{X^\nu}(\bar x^\nu).
\]
The existence of $\bar w^\nu$ implies that $\bar x^\nu\in X^\nu$ because otherwise $N_{X^\nu}(\bar x^\nu)$ would have been an empty set. Since $X^\nu\sto X$, we also have $\bar x\in X$. Thus,  $F^\nu(\bar x^\nu)\Nto F(\bar x)$ and we conclude that $\bar u = F(\bar x) - \bar z$. Attouch's theorem \cite[Theorem 12.35]{Rockafellar_Wets_1998} states that $h^\nu \eto h$ implies $\partial h^\nu \gto \partial h$. Consequently, $\bar v \in \partial h(\bar z) - \bar y$.

The inclusion for $w^\nu$ implies that there exist
\begin{equation}\label{eqn:constProofw}
a_i^\nu \in \con D_i^\nu(\bar x^\nu), ~i=1, \dots, m,~ ~~\mbox{ such that } ~~~ \bar w^\nu - \nabla f_0^\nu(\bar x^\nu) - \sum_{i=1}^m \bar y_i^\nu a_i^\nu \in N_{X^\nu}(\bar x^\nu).
\end{equation}
By Caratheodory's theorem, there are $\lambda_{ij}^\nu \geq 0$ and $b_{ij}^\nu\in D_i^\nu(\bar x^\nu)$, $j=0, 1, \dots, n$, such that $\sum_{j=0}^n \lambda_{ij}^\nu = 1$ and $a_i^\nu = \sum_{j=0}^n \lambda_{ij}^\nu b_{ij}^\nu$. The sequence $\{(\lambda_{i0}^\nu, \dots, \lambda_{in}^\nu), \nu\in N\}$ is contained in a compact set and thus has a convergent subsequence. By assumption \eqref{eqn:suffFconv}, $\{b_{ij}^\nu, \nu \in N\}$ is bounded with all its cluster points in $\con D_i(\bar x)$. Consequently, there exist $\bar \lambda_i = (\bar \lambda_{i0}, \dots, \bar \lambda_{in})\in \reals^{1+n}$, $\bar b_{i0}, \dots, \bar b_{in}$, and a subsequence of $N$, which we also denote by $N$, such that
\[
(\lambda_{i0}^\nu, \dots, \lambda_{in}^\nu) \Nto \bar\lambda_i \geq 0, ~\mbox{ with } ~\sum_{j=0}^n \bar \lambda_{ij} = 1,  ~~~\mbox{ and } ~~~ b_{ij}^\nu \Nto \bar b_{ij} \in \con D_i(\bar x).
\]
This means that $a_i^\nu \Nto \bar a_i = \sum_{j=0}^n \bar \lambda_{ij} \bar b_{ij} \in \con D_i(\bar x)$. Again, via Attouch's theorem, $\ind_{X^\nu} \eto \ind_X$ implies $N_{X^\nu} \gto N_X$ because $X^\nu$ and $X$ are convex. Assumption (c) ensures that $\nabla f_0^\nu(\bar x^\nu) \Nto \nabla f_0(\bar x)$. Using these facts as well as the observations that $\bar y_i^\nu \Nto \bar y_i$, result in  
\[
\bar w^\nu - \nabla f_0^\nu(\bar x^\nu) - \sum_{i=1}^m \bar y_i^\nu a_i^\nu ~\Nto~ \bar w  - \nabla f_0(\bar x) - \sum_{i=1}^m \bar y_i \bar a_i.
\]
The inclusion for $\bar w^\nu$ in \eqref{eqn:constProofw} implies that
\[
\bar w - \nabla f_0(\bar x) - \sum_{i=1}^m \bar y_i \bar a_i \in N_{X}(\bar x).
\]
Thus, because $\bar a_i \in \con D_i(\bar x)$, one has
\[
\bar w \in \nabla f_0(\bar x) + \sum_{i=1}^m \bar y_i \con D_i(\bar x) + N_{X}(\bar x)
\]
and we conclude that $(\bar u, \bar v, \bar w) \in S(\bar x, \bar y, \bar z)$. We have established that $\nOutLim (\gph S^\nu) \subset \gph S$.

Under the modified assumption that $X^\nu = X$, but not necessarily convex, we still have $N_{X^\nu} =N_X \gto N_X$ because $N_X$ is outer semicontinuous; see \cite[Proposition 6.6]{Rockafellar_Wets_1998}. Thus, the above argument carries over.

In either case, $\nOutLim (\gph S^\nu) \subset \gph S$ implies via \cite[Convergence statement 7.31]{Royset_Wets_2021} the assertion in (ii). 

\medskip

For (i), since $\epsilon^\nu$ is never infinity and $\epsilon^\nu\to 0$, there is vanishing $\{w^\nu\in\reals^n, \nu\in\nats\}$ such that 
\begin{align}\label{eqn:winclusion}
w^\nu \in \nabla f_0^\nu(x^\nu) + \sum_{i=1}^m y_i^\nu \con D_i^\nu(x^\nu) + N_{X^\nu}(x^\nu)~~~\forall \nu\in\nats.
\end{align}
Thus, $x^\nu\in X^\nu$ for all $\nu$ because $N_{X^\nu}(x^\nu)$ would otherwise have been an empty set. This fact together with \eqref{eqn:suffFconv} imply that $F^\nu(x^\nu)\Nto F(\bar x)$. Moreover, $F^\nu(x^\nu) - z^\nu\to 0$  because $\epsilon^\nu \to 0$. In combination, these results yield $z^\nu \Nto F(\bar x)$. Set $\bar z = F(\bar x)$. We examine two cases; the second is shown to be impossible.

\medskip

\noindent {\bf Case 1}. Suppose that there exists a subsequence $N' \subset N$ such that $\{y^\nu, \nu\in N'\}$ is bounded. Then, there exist a further subsequence $N''\subset N'$ and a point $\bar y$ such that $y^\nu$ converges to $\bar y$ along $N''$, i.e., $(\bar x, \bar y, \bar z)$ is a cluster point of $\{(x^\nu,y^\nu,z^\nu), \nu\in \nats\}$. It follows from (ii) that $0 \in S(\bar x, \bar y, \bar z)$.

\medskip
  
\noindent {\bf Case 2}. Suppose that $\|y^\nu\|_2 \Nto \infty$. Since $y^\nu \neq 0$ for all $\nu \in N$ sufficiently large, $v^\nu = y^\nu/\|y^\nu\|_2$ is well defined for such $\nu$. The boundedness of $\{v^\nu, \nu\in N\}$ implies the existence of $\bar v \in \reals^m$ and a subsequence $N'\subset N$ such that $\{v^\nu, \nu\in N'\}$ is well defined and $v^\nu \to \bar v$ along $N'$.  

The inclusion for $w^\nu$ in \eqref{eqn:winclusion} implies that there exist $a_i^\nu \in \con D_i^\nu(x^\nu)$, $i=1, \dots, m$, such that 
\begin{equation}\label{eqn:constProofw2}
w^\nu - \nabla f_0^\nu(x^\nu) - \sum_{i=1}^m y_i^\nu a_i^\nu \in N_{X^\nu}(x^\nu).
\end{equation}
Following an argument similar to that after \eqref{eqn:constProofw}, we find that there exist points $\bar a_i \in \con D_i(\bar x), i=1, \dots, m$, and a subsequence $N''\subset N'$ along which $a_i^\nu$ converges to $\bar a_i$. From \eqref{eqn:constProofw2} and using the fact that the inclusion involves a cone, we obtain that  
\[
w^\nu/\|y^\nu\|_2 - \nabla f_0^\nu(x^\nu)/\|y^\nu\|_2 - \sum_{i=1}^m v_i^\nu a_i^\nu \in N_{X^\nu}(x^\nu) ~~~\forall \nu\in N''.
\]
Via Attouch's theorem, $\ind_{X^\nu} \eto \ind_X$ implies $N_{X^\nu} \gto N_X$ because $X^\nu$ and $X$ are convex. In the absence of convexity but under the additional assumption that $X^\nu = X$ for all $\nu$, the same graphical convergence holds because $N_X$ is outer semicontinuous; see, e.g., \cite[Proposition 7.22]{Royset_Wets_2021}. Using this fact as well as the observation that  
\[
w^\nu/\|y^\nu\|_2 - \nabla f_0^\nu(x^\nu)/\|y^\nu\|_2 - \sum_{i=1}^m v_i^\nu a_i^\nu ~\mbox{ converges to } - \sum_{i=1}^m \bar v_i \bar a_i \mbox{ along $N''$}
\]
because $\nabla f_0^\nu(x^\nu)\Nto \nabla f_0(\bar x)$, we obtain that $- \sum_{i=1}^m \bar v_i \bar a_i \in N_{X}(\bar x)$. Thus, because $\bar a_i \in \con D_i(\bar x)$, one has
\begin{equation}\label{eqn:contraQproof}
0 \in \sum_{i=1}^m \bar v_i \con D_i(\bar x) + N_{X}(\bar x).
\end{equation}
Since $\bar v \neq 0$, we have contradicted the qualification \eqref{qualificationInfeas} if $F(\bar x) \not\in \dom h$. It only remains to consider the possibility $F(\bar x) \in \dom h$. 

Suppose that $\bar z = F(\bar x) \in \dom h$. There exists a sequence $\{c^\nu \in \reals^m, \nu\in\nats\}$ with $c^\nu \to 0$ such that $y^\nu + c^\nu \in \partial h^\nu(z^\nu)$ for all $\nu\in\nats$ because $\epsilon^\nu\to 0$. The subgradient inequality for convex functions establishes that 
\[
h^\nu(z') \geq h^\nu(z^\nu) + \langle y^\nu + c^\nu, z' - z^\nu\rangle ~~~\forall z' \in \reals^m.
\]
For $\nu\in N'$, one has
\begin{equation}\label{eqn:cvxineq}
\big(h^\nu(z') - h^\nu(z^\nu)\big)/\|y^\nu\|_2\geq   \big\langle v^\nu + c^\nu/\|y^\nu\|_2, z' - z^\nu\big\rangle ~~~\forall z' \in \reals^m.
\end{equation}
Let $\hat z \in \dom h$. Since $h^\nu\eto h$, $h$ is proper, and $\bar z\in \dom h$, a characterization of epi-convergence (see, e.g., \cite[Theorem 4.15]{Royset_Wets_2021}) ensures that $\nliminf_{\nu\in N} h^\nu(z^\nu) \geq h(\bar z) \in \reals$ and, for some $\hat z^\nu\to \hat z$, $h^\nu(\hat z^\nu) \to h(\hat z)\in\reals$. The quantity $h^\nu(\hat z^\nu) - h^\nu(z^\nu)$ is therefore bounded from above as $\nu\to \infty$ along $N$. Thus, \eqref{eqn:cvxineq} with $z' = \hat z^\nu$, implies after passing to the limit (along $N'$) that $0 \geq \langle \bar v, \hat z - \bar z\rangle$. Since $\hat z \in \dom h$ is arbitrary, it follows that $\bar v \in N_{\dom h}(\bar z)$. In combination with \eqref{eqn:contraQproof}, we have contradicted the basic qualification \eqref{qualification}. Thus, Case 2 is not possible.

\medskip

For (iii) and (iv), it suffices to show $\nInnLim (\gph S^\nu) \supset \gph S$ due to \cite[Theorem 5.37(b)]{Rockafellar_Wets_1998}. First, consider (iii). Now, we obtain the simplification
\[
\sum_{i=1}^m y_i \con D_i(x) = \Big\{\sum_{i=1}^m y_i \nabla f_i(x)\Big\} = \big\{\nabla F(x)^\top y\big\},
\]
with a similar expression for the approximating functions. Let $(\bar x, \bar y, \bar z, \bar u, \bar v, \bar w) \in \gph S$. This means that $\bar u = F(\bar x) - \bar z$, $\bar v \in \partial h(\bar z)- \bar y$, and $\bar w \in \nabla f_0(\bar x) + \nabla F(\bar x)^\top \bar y + N_{X}(\bar x)$.
In particular, $\bar x\in X$. Set $\bar y^\nu = \bar y$. Since $h^\nu\eto h$, it follows by Attouch's theorem that $\partial h^\nu \gto \partial h$ and there are $\bar v^\nu \to \bar v$ and $\bar z^\nu\to \bar z$ such that $\bar v^\nu + \bar y \in \partial h^\nu(\bar z^\nu)$.

Since $N_{X^\nu} \gto N_X$, there's $(\bar x^\nu,t^\nu) \in \gph N_{X^\nu}$ with $\bar x^\nu \in X^\nu \to \bar x$ and $t^\nu \to \bar w - \nabla f_0(\bar x) - \nabla F(\bar x)^\top \bar y$. Construct $\bar w^\nu = t^\nu +  \nabla f_0^\nu(\bar x^\nu) + \nabla F^\nu(\bar x^\nu)^\top \bar y$. We then have $\bar w^\nu \to \bar w$ and $\bar w^\nu - \nabla f_0^\nu(\bar x^\nu) - \nabla F^\nu(\bar x^\nu)^\top \bar y \in N_{X^\nu}(\bar x^\nu)$. Also, construct $\bar u^\nu = F^\nu(\bar x^\nu) - \bar z^\nu$, which converges to $\bar u$.
In summary, we have constructed 
\[
(\bar x^\nu, \bar y^\nu, \bar z^\nu, \bar u^\nu, \bar v^\nu, \bar w^\nu) \in \gph S^\nu \to (\bar x, \bar y, \bar z, \bar u, \bar v, \bar w).
\]
This means that $(\bar x, \bar y, \bar z, \bar u, \bar v, \bar w) \in \nInnLim (\gph S^\nu)$ and, thus, $\nInnLim (\gph S^\nu) \supset \gph S$ holds.

Again the convexity of $X,X^\nu$ is not needed if these sets coincide. 

Second, consider (iv). Let $(\bar x, \bar y, \bar z, \bar u,
\bar v, \bar w) \in \gph S$, i.e., 
\[
\bar u = F(\bar x) - \bar z, ~~~ \bar v \in \partial h(\bar z)- \bar y, ~~~ \bar w \in \nabla f_0(\bar x) + \sum_{i=1}^m \bar y_i \con D_i(\bar x) + N_{X}(\bar x).
\]
Set $\bar x^\nu = \bar x$, $\bar y^\nu = \bar y$, and $\bar w^\nu = \bar w + \nabla f_0^\nu(\bar x) - \nabla f_0(\bar x)$. By assumption (c), $\bar w^\nu\to \bar w$. Since $h^\nu\eto h$, it follows by Attouch's theorem that $\partial h^\nu \gto \partial h$ and there are $\bar v^\nu \to \bar v$ and $\bar z^\nu\to \bar z$ such that $\bar v^\nu + \bar y \in \partial h^\nu(\bar z^\nu)$. Let $\bar u^\nu = F(\bar x) - \bar z^\nu$. This means that $(\bar x^\nu, \bar y^\nu, \bar z^\nu, \bar u^\nu, \bar v^\nu, \bar w^\nu) \in \gph S^\nu \to (\bar x, \bar y, \bar z, \bar u, \bar v, \bar w)$. Consequently, $(\bar x, \bar y, \bar z, \bar u, \bar v, \bar w) \in \nInnLim (\gph S^\nu)$ and $\nInnLim (\gph S^\nu) \supset \gph S$ holds.\qed
\end{proof}

The assumptions of the theorem are satisfied in many typical situations. For example, the global approximation $X^\nu = [-\rho^\nu,\rho^\nu]^n$ set-converges to $X = \reals^n$ as $\rho^\nu\to \infty$. Likewise, $h^\nu(z) = \theta^\nu\sum_{i=1}^m \max\{0,z_i\}$ defines a function that epi-converges to $h = \iota_{(-\infty,0]^m}$ as $\theta^\nu\to \infty$. The function $\mathtt{LSE}^\nu_{ij}$ is a global approximation of $f_{ij} = \min_{k\in \mathbb{K}_j} \psi_{ik}$ in Example \ref{ex:buffered} and satisfies the requirements in \eqref{eqn:suffFconv} when $\eta^\nu\to 0$. We refer to \cite{Royset2023a} for additional examples. 

The main novelty of Theorem~\ref{thm:optim} lies in part (i) with parts (ii-iv) representing extensions of results implicit in \cite{Royset2023a} by allowing for arbitrary set-valued mappings $D_i^\nu, D_i$ and also including a smooth function $f_0$. The importance of (i) over (ii) is that the former identifies the basic qualification \eqref{qualification} and the additional qualification \eqref{qualificationInfeas} as sufficient to avoid unbounded sequences $\{(y^\nu,z^\nu), \nu\in\nats\}$, which would have make (ii) inapplicable.  

The additional qualification \eqref{qualificationInfeas} cannot be removed. For $X = X^\nu = \reals$, $h(z) = \ind_{(-\infty,0]}(z)$, $h^\nu(z) = \theta^\nu \max\{0, z\}$, $f_0(x) = f_0^\nu(x) = 0$ for all $x$, $F^\nu(x) = F(x) = \sin x$, and $D_i^\nu(x) = D_i(x) = \{\cos x\}$, set $(x^\nu, y^\nu, z^\nu) = (\pi/2, \theta^\nu, 1)$ for all $\nu\in\nats$. Then, 
\[
S^\nu(x^\nu,y^\nu,z^\nu) = \{\sin x^\nu - z^\nu\} \times \{\theta^\nu - y^\nu\} \times \{y^\nu \cos x^\nu\}  = \{(0,0,0)\}
\]
and $h^\nu \eto h$ provided that $\theta^\nu\to \infty$. Nevertheless, there are no $\bar y\in\reals$ and $\bar z\in \reals$ such that   
\[
0 \in S(\bar x,\bar y, \bar z) = \{1 - \bar z\} \times \big(N_{(-\infty,0]}(\bar z) - \bar y\big) \times \{0\}  
\]
because $N_{(-\infty,0]}(1) = \emptyset$. In fact while satisfying the optimality condition for each approximating problem, $\pi/2$ is an infeasible point for the actual problem. The additional qualification \eqref{qualificationInfeas} fails at $\pi/2$ because we cannot conclude that $y = 0$ from $0 = y \cos \pi/2$.

\section{Local Model and Implementation: Composite Proximal Bundle Method}\label{sec:bundle}

Algorithm~\ref{algo} of Section~\ref{sec:global} is conceptual as it omits instructions for how to obtain near-stationary points $x^\nu$ of the approximating problems \eqref{pbm-a} satisfying $\dist(0,S^\nu(x^\nu,y^\nu,z^\nu))\leq \epsilon^\nu$ for some $y^\nu,z^\nu\in \Re^m$. In this section, we give a local algorithm for this purpose. Combined with Algorithm~\ref{algo}, it results in a double-loop algorithm for the actual problem \eqref{pbm}.

To lighten the notation and stress the possibility that there might be {\em no} global approximation in a particular application, we omit the superscript $\nu$ and develop the local algorithm for the actual problem \eqref{pbm} under additional assumptions. However, a primary purpose of the algorithm is to obtain near-stationary points of the approximating problems \eqref{pbm-a}. 
%
We summarize the assumptions next. They are mainly motivated by the needs of an implementable algorithm. 

\begin{assumption}{\rm (composite proximal bundle method).}\label{assumptions}
\mbox{ }
\begin{enumerate}[label=(\alph*), start=1]
\item $X\subset \Re^n$ is nonempty, closed, and convex.
\item $f_0\fc{\Re^n}{\Re}$ is smooth and convex.
\item Each component function $f_1, \dots, f_m$ of $F$ is $L_i$-smooth on $X$, i.e., for each $i=1, \dots, m$, $f_i$ is smooth, $D_i(x)=\col{\nabla f_i(x)}$, and there exists $L_i>0$ such that
\[
\big\|\nabla f_i(x)-\nabla f_i(x')\big\|_2 \leq \tfrac{1}{2}L_i \|x-x'\|_2\quad \forall\, x,x'\in X.
\]

\item $h\fc{\Re^m}{\Re}$ is convex and there exists $L_h>0$ such that
\[
\big|h(z)-h(z')\big|\leq {L_h}\|z-z'\|_2\quad \forall\, z,z'\in \Re^m.
\]

\end{enumerate}
\end{assumption}

We stress that assumptions of this form might only need to hold for the approximating problems \eqref{pbm-a}. Thus, starting from the actual problem, we can deliberately construct global approximations that satisfy Assumption \ref{assumptions}. In Example~\ref{ex:buffered}, the global approximations $F^\nu$ satisfy Assumption~\ref{assumptions}(c) when $\psi_{ik}$, $i=1, \dots, s$, $k=1, \dots, r$, are twice smooth and an approximation $X^\nu$ of $X$ is compact because then $F^\nu$ is twice smooth by \cite[Example 4.16]{Royset_Wets_2021}. The global approximations $h^\nu$ satisfy Assumption~\ref{assumptions}(d) by virtue of being defined by sum and maxima of linear functions. We discuss Example~\ref{ex:ncvxSP} at the end of this subsection and Example~\ref{example:dist2set} in Section~\ref{sec:dc}.

The following algorithm, called Algorithm \ref{bundlemethod}, relies on local models of $h$ in \eqref{pbm} under Assumption~\ref{assumptions}, with certain properties of which convexity and lower boundedness are central. By following the lead of \cite{Oliveira_Sagastizabal_2014}, we say that 
\[
\text{$\modelh$ is a {\em lower model} of $h$ if it is a convex, real-valued function on $\reals^m$, and $\modelh(z) \leq h(z)~\forall z\in \reals^m$.}
\]

\begin{algorithm}[htb]
\caption{Composite Proximal Bundle Method}
\label{bundlemethod}
\begin{algorithmic}[1]
{\footnotesize
\State  Data: $x_0\in X$, $\kappa \in (0,1)$, $1<\tau $, $0<\underbar{t}<t_0<t_{\max}<\infty$, $\tol \geq 0$
\State Set $\hat x_0\gets x_0$ and construct a lower model $\modelh_0$ of $h$
\For{$k = 0, 1, 2, \dots$}
\State Compute $(x_{k+1}, y_{k+1}, z_{k+1})$ satisfying \label{line:xkk}
\begin{subequations}\label{eqn:MP}
\begin{align}
z_{k+1} &= F(\hat x_k)+\nabla F(\hat x_k)(x_{k+1}-\hat x_k)\label{eqn:MP1}\\
y_{k+1} &\in \partial \modelh_k(z_{k+1})\label{eqn:MP2}\\
-\nabla f_0(x_{k+1}) - \nabla F(\hat x_k)^\top y_{k+1} - \frac{x_{k+1}-\hat x_k}{t_k} &\in N_X(x_{k+1})\label{eqn:MP3} 
\end{align}
\end{subequations}
\State Set $v_k\gets f_0(\hat x_k)+h(F(\hat x_k)) - \big(f_0(x_{k+1}) + \modelh_k(z_{k+1})\big)$ \label{line:vk}
\If{$v_k\leq \tol$}
\State Stop and return $x_{k+1}$ 
\EndIf
\If{$f_0(x_{k+1})+ h(z_{k+1})\leq f_0(\hat x_k)+ h(F(\hat x_k))-  \kappa v_k$}\label{call-oracle-c}
\If{$f_0(x_{k+1})+h(F(x_{k+1}))\leq f_0(\hat x_k)+ h(F(\hat x_k))-  \frac{\kappa}{2} v_k$}\label{call-oracle-hc}
\State Set $\hat x_{k+1} \gets x_{k+1}$ and choose $t_{k+1}\in [t_k, \,t_{\max}]$\Comment{Serious Step}
\State Compute $\hat s_{k+1} \in \partial h(F(\hat x_{k+1}))$
\State Construct a lower model $\modelh_{k+1}$ of $h$ satisfying
\[
h(F(\hat x_{k+1}))+\inner{\hat s_{k+1}}{z-F(\hat x_{k+1})}
 \leq \modelh_{k+1}(z)~~~~~\forall z\in \reals^m
\]
\Else
\State Set $ \hat x_{k+1}\gets \hat x_k$, $t_{k+1}\gets t_k/\tau$, and $\underbar{t}\gets \min\col{t_{k}/\tau,\,\underbar{t}}$ \label{line:decrease_t} \Comment{Backtracking Step}
\State Compute $s_{k+1} \in \partial h(z_{k+1})$
\State Construct a lower model $\modelh_{k+1}$ of $h$ satisfying
\begin{equation}\label{model_req}
\max\begin{Bmatrix}
h(F(\hat x_k))+\inner{\hat s_{k}}{z-F(\hat x_k)}\\
h(z_{k+1})+\inner{s_{k+1}}{z-z_{k+1}}\\
\modelh_k(z_{k+1}) + \inner{y_{k+1}}{z - z_{k+1} }
\end{Bmatrix}
 \leq \modelh_{k+1}(z)~~~~~~\forall z\in \reals^m
\end{equation} 
\EndIf
\Else
\State Set $ \hat x_{k+1}\gets \hat x_k$ and choose $t_{k+1}\in [\underbar{t},\, t_k]$ \label{NS}  \Comment{Null Step}
\State Compute $s_{k+1} \in \partial h(z_{k+1})$
\State Construct a lower model $\modelh_{k+1}$ of $h$ satisfying \eqref{model_req}
\EndIf
\EndFor
}
  \end{algorithmic}
\end{algorithm}

There is much flexibility in how to construct lower models in bundle methods. In the convex setting, \cite{Correa_Lemarechal_1993} shows that any lower model that majorizes two specific linearizations suffices to ensure the desirable convergence properties. That analysis carries over to nonconvex settings as well \cite{Hare_Sagastizabal_2010,Oliveira_2019}. Algorithm \ref{bundlemethod} incorporates this flexibility and allows for a wide array of local models.

The main computational effort of Algorithm \ref{bundlemethod} takes place on Line~\ref{line:xkk}, which computes a global solution $(x_{k+1}, y_{k+1}, z_{k+1})$ of the convex {\em master problem} 
\begin{equation}\label{spbm}
\nnmin_{x\in X} f_0(x) + \modelh_k\big(F(\hat x_k)+\nabla F(\hat x_k)(x-\hat x_k)\big) + \frac{1}{2t_k}\norm{x-\hat x_k}^2
\end{equation} 
as expressed by the optimality condition \eqref{eqn:MP}. It is reasonable to assume that \eqref{spbm} is solved exactly because $\modelh_k$ is constructed to be a ``simple'' model of $h$, and both $h$ and $X$ might be ``simple,'' global approximations of some underlying quantities. Typically, $\epi \modelh_k$ and $X$ in \eqref{spbm} are polyhedral. 

As a concrete example, the local models $\modelh_k$ may be {\em cutting-plane models} of the form
\begin{equation}\label{cp}
\check{h}_k(z)=\max_{j\in B_k}\big\{h(z_j)+\inner{s_j}{z-z_j}\big\},
\end{equation}
with $B_k$ an index set. Then, \eqref{spbm} is equivalent to solving 
\begin{align}
\nnmin_{x\in X,\, (z,r) \in \Re^{m+1}} f_0(x) + r + \frac{1}{2t_k}\norm{x-\hat x_k}^2&\label{eqn:cutmodel}\\
\mbox{subject to } F(\hat x_k)+\nabla F(\hat x_k)(x-\hat x_k) & = z\nonumber\\
h(z_j) + \inner{s_j}{z-z_j} & \leq r~~\; \forall\, j\in B_k.\nonumber
\end{align}
If $X$ is polyhedral and $f_0$ is quadratic, then this problem is a convex quadratic program (QP), for which there are several efficient solvers. Regardless, \eqref{spbm} is a strictly convex optimization problem that can be addressed using a variety of methods found in the literature (e.g., convex bundle methods \cite[Chapters 11 \& 12]{WWBook}). Thus, $x_{k+1}$ and $z_{k+1}$ in Line~\ref{line:xkk} are readily available. 

As is standard in proximal bundle methods, the model subgradient $y_{k+1}$ can be computed straightforwardly as well. Let $\alpha_j \geq 0$ denote the Lagrange multiplier associated with the linear constraint $h(z_j) + \inner{s_j}{z - z_j} \leq r$ in \eqref{eqn:cutmodel}. By analyzing the dual problem, it can be shown that $\sum_{j \in B_k} \alpha_j = 1$ and that
$
y_{k+1} = \sum_{j \in B_k} \alpha_j s_j
$
is a subgradient of the cutting-plane model at $z_{k+1}$; see, for instance, \cite[\S\,11.5]{WWBook}.  

There is flexibility in the choice of \emph{information bundle} $B_k$ in \eqref{eqn:cutmodel} so that the requirement in~\eqref{model_req} holds. For instance, $B_k$ can contain only the three indexes related to the linearizations in \eqref{model_req}. This is an economical bundle that reduces the size of \eqref{eqn:cutmodel}. A full bundle amounts to keeping all linearizations in memory ($B_k=\col{0,1,\ldots,k}$) and also satisfies \eqref{model_req}. Numerical practice suggests that choosing $B_k$ between these two extreme cases is beneficial. A usual choice satisfying~\eqref{model_req} is $B_k=\bar B_k \cup\col{k+1}$, with $\bar B_k=\col{i \in B_k: \; \alpha_j >0 }$ being the index set of active constraints (linearizations) in \eqref{eqn:cutmodel}.

In addition to these cutting-plane models for defining $\modelh_k$, our setting is general enough to accommodate richer models such as quadratic \cite{Astorino_2011} or spectral \cite{Helmberg_2000} ones. Subsection~\ref{sec:special} presents an alternative local model for the case when $h$ is a composite function, and explains how it can be updated to meet the requirements in \eqref{model_req}.

Most lines after Line~\ref{line:xkk} contribute to the construction of the local models. They ensure that $\modelh_k$ satisfies, for all $z \in \Re^m$,
\[
 h\big(F(\hat x_k) + \inner{\hat s_k}{z-F(\hat x_k)}\big) \leq \modelh_k(z) \leq h(z)\quad \forall\, k. 
\]
In particular, the model $\modelh_k$ coincides with $h$ at $z=F(\hat x_k)$:
\begin{equation}\label{model=c}
f_0(\hat x_k)+\modelh_k\big(F(\hat x_k)\big) = f_0(\hat x_k)+ h\big(F(\hat x_k)\big).
\end{equation}
Line~\ref{line:vk} defines the \emph{predicted decrease} $v_k$, which estimates how much the new iterate $x_{k+1}$ is expected to decrease the objective function. Specifically, $\hat x_k$ is feasible in \eqref{spbm}, which implies
\[
f_0(x_{k+1})+\modelh_k\big(F(\hat x_k)+\nabla F(\hat x_k)(x_{k+1}-\hat x_k)\big) + \frac{1}{2t_k}\norm{x_{k+1}-\hat x_k}^2\leq f_0(\hat x_{k})+h\big(F(\hat x_k)\big).
\]
Together with~\eqref{model=c}, this inequality ensures the following key relation:
\begin{equation}\label{vk}
\frac{1}{2t_k}\norm{x_{k+1}-\hat x_k}^2 \leq  v_k \quad \forall\, k.
\end{equation}
Since $F$ is linearized at $\hat{x}_k$ and $h$ is replaced by the lower model $\modelh_k$, the predicted decrease $v_k$ may not reliably estimate the quantity
\[
f_0(\hat{x}_k) + h\big(F(\hat{x}_k)\big) - \Big(f_0(x_{k+1}) + h\big(F(x_{k+1})\big)\Big).
\]
For this reason, Algorithm~\ref{bundlemethod} classifies iterates as \emph{serious}, \emph{null}, or \emph{backtracking}. An iterate is declared \emph{serious} when the actual decrease is at least a fraction $\kappa/2$ of the predicted decrease. Otherwise, the model must be refined, and the prox-parameter $t_k$ is not allowed to increase (it is indeed decreased at backtracks), with the aim of making the predicted decrease $v_k$ closer to the actual decrease. 
%
 More specifically, if the first test at Line~\ref{call-oracle-c} is successful, then
 two possibilities arise: Algorithm~\ref{bundlemethod} either declares a serious step (in the sense that \( x_{k+1} \) decreases the objective function), or a backtracking step. The interpretation of the latter is as follows: while the model \( \modelh_k \) provides a sufficiently accurate approximation of \( h \) around $F(\hat{x}_k)$, the composed model \( \modelh_k(F(\hat{x}_k) + \nabla F(\hat{x}_k)(\,\cdot\, - \hat{x}_k)) \) still fails to adequately approximate $h\circ F$ around $\hat x_k$. The algorithm thus decreases the prox-parameter to generate new trial points in the vicinity of the best solution candidate \( \hat{x}_k \), where the approximation error is smaller (recall that \( F \) consists of \( L_i \)-smooth functions under Assumption~\ref{assumptions}). 
If the test at Line~\ref{call-oracle-hc} is satisfied, then the iterate $x_{k+1}$ becomes the new stability center and the model $\modelh_{k+1}$ can be as simple as the linearization $h(F(\hat x_{k+1}))+\inner{\hat s_{k+1}}{\cdot-F(\hat x_{k+1})}$. On the other hand, if this test fails, condition~\eqref{model_req} requires that the updated model not only overestimates the linearization of $ h$ at the last stability center (this linearization is available from earlier computations), but also the linearizations of both $h$ and $\modelh_k$ at the point $z_{k+1}$. 

The predicted decrease $v_k$ also serves as an optimality measure. By \eqref{vk}, $v_k=0$ implies that $x_{k+1}=\hat x_k$. Then, $\hat x_k$ solves the master problem \eqref{spbm} and its optimality condition \eqref{eqn:MP} reduces to
\[
-\nabla f_0(\hat x_{k}) - \nabla F(\hat x_k)^\top y_{k+1} \in N_X(\hat x_{k}),\quad\mbox{with }\; y_{k+1} \in \partial \modelh_k\big(F(\hat x_k)\big).
\]
This actually implies that $\hat x_k$ is stationary for \eqref{pbm} under Assumption~\ref{assumptions} because $\modelh_k$ is a lower model that coincides with $h$ at $F(\hat x_k)$ and thus $\partial \modelh_k(F(\hat x_k)) \subset \partial h(F(\hat x_k))$. Consequently, $0\in S(\hat x_k,y_{k+1},F(\hat x_k))$ with $S$ given in~\eqref{eqn:S} (recall that $D_i(x)=\col{\nabla f_i(x)}$ in Assumption~\ref{assumptions}(c)). The condition $v_k\leq \tol$ for some $\tol\geq 0$ is therefore a valid stopping test provided that $\col{t_k, k\in\nats}$ is bounded away from zero.  In Algorithm~\ref{bundlemethod}, the prox-parameter $t_k$ satisfies $t_k\geq \underbar{t}$ for all $k$, with
$\underbar{t}$ being potentially decreased during backtracking steps. We show in Lemma~\ref{lem:bound_tk} that $\underbar{t}$ and, thus, $t_k$ are bounded away from zero.

While we omit the details, Algorithm~\ref{bundlemethod} is easily extended to handle a variable prox-metrics that replaces $\frac{1}{2t_k} \norm{x - \hat{x}_k}^2$ by a quadratic form involving a positive definite matrix; see \cite{Sagastizabal_2013} for such enhancements.

\begin{example}{\rm (distributionally robust two-stage nonconvex programs (cont.)).}\label{ex:ncvxSP2} The function $h$ in Example~\ref{ex:ncvxSP} is real-valued, convex, and satisfies the Lipschitz condition in Assumption~\ref{assumptions}(d) because $\partial h(u) = \nargmax_{p\in \mathcal{P}} \langle p,u\rangle\subset \mathcal{P}$ and $\mathcal{P}$ is compact. 

Under additional assumptions, $f_i$ is lLc and its global approximation $f_i^\nu$ in  Example~\ref{ex:ncvxSP} satisfies Assumption~\ref{assumptions}(c) and also \eqref{eqn:suffFconv}. To simplify the notation, we omit the subscript $i$ and consider the function $f:\reals^n\to \reals$ given by 
\[
f(x) = \inf_{y \in Y}\big\{ \phi(x,y)~\big|~ \psi(x,y)\leq 0 \big\}, 
\]
where $Y \subset \Re^{m}$ is nonempty, convex, and compact and both $\phi:\Re^n\times \Re^m \to \Re$ and $\psi:\Re^n\times \Re^m \to \Re^{q}$ are smooth. Moreover, suppose that at each $\bar x\in \reals^n$ the following hold: (i) $\phi(\bar x,\cdot)$ and each component function of $\psi(\bar x,\cdot)$ are convex; $\{y\in Y~|~\psi(\bar x,y)\leq 0\}\neq\emptyset$; and for all $\bar y \in \nargmin_{y\in Y}\{\phi(\bar x,y)\,|\, \psi(\bar x,y)\leq 0 \}$, the qualification holds: 
\begin{align}\label{qual:infproj}
v\in N_{(-\infty,0]^q}\big(\psi(\bar x, \bar y)\big) ~\mbox{ and }~ -\nabla_y \psi(\bar x, \bar y)^\top v \in N_{Y_i}(\bar y)~~~\Longrightarrow~~~v = 0.
\end{align}
Under these conditions, $f$ is lLc. This fact follows from \cite[Corollary 10.14(a)]{Rockafellar_Wets_1998} when applied to $f$ at an arbitrary point $\bar x\in\reals^n$. The corollary indeed applies because for $\bar y \in \nargmin_{y\in Y}\{\phi(\bar x,y)\,|\, \psi(\bar x,y)\leq 0 \}$ one has
\[
(z,0) \in \partial^\infty g(\bar x, \bar y)  ~~\Longrightarrow~~ z = 0, 
\]
where $g$ is the function given by $g(x,y) = \phi(x,y) + \iota_Y(y) + \iota_{(-\infty,0]^q}(\psi(x,y))$. The necessary calculus rules (see, e.g., Example 10.8 in \cite{Rockafellar_Wets_1998}) apply under qualification \eqref{qual:infproj}. We observe that this qualification holds, for example, if $Y= \reals^m$ and the constraints $\psi(\bar x, y)\leq 0$ satisfy the Slater condition; see, e.g., \cite[Example 5.47]{Royset_Wets_2021}.  

Next, we discuss the approximating function $f^\nu:\reals^n\to \reals$ and its relation to $f$ under the assumption that there is no $x$-dependent constraints and $\phi(x,y) = \tfrac{1}{2}\langle y, Q(x)y\rangle + \langle c(x), y\rangle$, i.e.,  
\[
f(x) = \inf_{y\in Y} \tfrac{1}{2}\big\langle y, Q(x)y\big\rangle + \big\langle c(x), y\big\rangle, ~~~~~f^\nu(x) = \inf_{y \in Y} \tfrac{1}{2}\big\langle y, Q(x)y\big\rangle + \big\langle c(x), y\big\rangle + \tfrac{1}{2}\eta^\nu\norm{y}^2,
\]
where $\eta^\nu\in (0,\infty)$, $Q:\reals^n\to \reals^{m\times m}$ and $c:\reals^n\to \reals^m$ are smooth, $Q(x)$ is positive semidefinite for all $x\in\reals^n$, and there is $M\in [0,\infty)$ such that 
\[
\big\|Q(x) - Q(\bar x)\big\|_F \leq M\|x- \bar x\|_2~~~ \text{ and }~~~ \big\|c(x) - c(\bar x)\big\|_2 \leq M\|x- \bar x\|_2~~~\forall x,\bar x\in X.
\] 
Here $\|\cdot\|_F$ is the Frobenius norm. We can then invoke \cite[Proposition 6.30]{Royset_Wets_2021} to conclude that 
\[
\bar \partial f(x) = \con\big\{ \nabla_x \phi(x,\bar y) ~\big|~ \bar y \in \nargmin_{y\in Y} \phi(x,y) \big\}, ~~~
\partial f^\nu(x)  = \Big\{ \nabla_x \phi\big(x,y^\nu(x)\big) \Big\}
\]
for the unique maximizer $y^\nu(x)$ solving $-(Q(x) + \eta^\nu I)y - c(x) \in N_Y(y)$, where $I$ is the $m\times m$ identity matrix. It now follows straightforwardly that \eqref{eqn:suffFconv} holds when $\eta^\nu \to 0$. Moreover, if $\nabla Q$ and $\nabla c$ are lLc and $X$ is bounded, then $f^\nu$ is smooth and also satisfies Assumption~\ref{assumptions}(c).\mybox 
\end{example}

\subsection{Convergence Analysis}
Let $N^a \subset \nats$ be the index set of iterations generated by Algorithm \ref{bundlemethod}, which may be finite if the algorithm stops after finitely many steps or simply be $\nats$ otherwise. Under Assumption~\ref{assumptions} and minor boundedness conditions, we establish next that: if $N^a$ is infinite, all cluster points of the  sequence $\col{\hat x_k, k\in N^a}$ produced by Algorithm~\ref{bundlemethod} are stationary; if $N^a$ is finite, the last point $\hat x_k$ is stationary. Moreover, if the tolerance $\tol$ in Algorithm~\ref{bundlemethod} is positive and the algorithm is applied to an approximating problem \eqref{pbm-a}, then the algorithm stops after a finite number of iterations with a triplet $(x,y,z)$ satisfying $\dist(0, S^\nu(x,y,z)) \leq \epsilon^\nu$ for some $\epsilon^\nu \in [0,\infty)$ that vanishes if $\tol$ tends to zero. Thus, Algorithm~\ref{bundlemethod} furnishes an approach for implementing Line 3 in Algorithm \ref{algo}. 

The first boundedness condition pertain merely to a bounded level-set: 
We say that \eqref{pbm} is {\em level-bounded for} $x_0\in X$ when 
\[
\Big\{x \in X\,\Big|\, f_0(x) +h\big(F(x)\big)\leq  f_0(x_0) + h\big(F(x_0)\big)\Big\} ~~~\text{ is bounded}.
\]
The second boundedness condition relates to the size of the subgradients of $\modelh_k$. We say that the models $\{\modelh_k, k\in N^a\}$ have {\em uniformly bounded subgradients on bounded sets} when 
\begin{equation}\label{uniform_bound}
    \mbox{for any bounded $C \subset \Re^m$, $\exists M_C>0$ such that } \norm{y}\leq M_C ~~\forall y\in \partial \modelh_k(z), z\in C, k \in N^a.
\end{equation}
This is a natural assumption in our setting because \( h \) is real-valued and convex under Assumption~\ref{assumptions} and its subdifferential is therefore locally bounded. Any meaningful model $\modelh_k$ for $h$ should also exhibit this property. This is, for instance, the case of the cutting-plane model~\eqref{cp}, as its subdifferential is the convex hull of subgradients of $ h$ at previous points; Subsection~\ref{sec:special} provides another example.

%

\begin{theorem}{\rm (convergence of Algorithm~\ref{bundlemethod}).}\label{thm:conv} Under Assumption~\ref{assumptions}, suppose that Algorithm~\ref{bundlemethod} is applied to an instance of \eqref{pbm} that is level-bounded for $x_0$. 

If $\tol=0$, then either the algorithm stops on Line 7 in iteration $k$ with $x^\star = \hat x_k = x_{k+1}$ satisfying 
\begin{equation}\label{eqn:optcond1}
0\in S(x^\star,y^\star,z^\star) \text{ for some } y^\star\in \reals^m \mbox{ and } z^\star=F(x^\star)
\end{equation}
or the algorithm continues indefinite. In the latter case, if the resulting models $\{\modelh_k, k\in \nats\}$ have uniformly bounded subgradients on bounded sets, then the iterates $\col{x_k,k\in \nats}$, stability centers $\col{\hat x_k,k\in \nats}$, and predicted decreases $\{v_k,k\in \nats\}$ are well-defined, $\nliminf v_k = 0$, and every cluster point $x^\star$ of $\col{\hat x_k,k\in \nats}$ satisfies \eqref{eqn:optcond1}.

If $\tol>0$, then the algorithm stops on Line 7 after a finite number of iteration $k_\tol$ and the last iterate $ x_\tol=x_{k_\tol+1}$ is approximately stationary by satisfying
\[
\dist\big(0, S(x_\tol,y^\star ,z^\star)\big) \leq \epsilon_\tol,
\]
and $\epsilon_\tol\geq 0$ is such that $\epsilon_\tol \to 0$ as $\tol \to 0$.
\end{theorem}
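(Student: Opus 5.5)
The argument follows the standard proximal bundle pattern: finite termination gives stationarity directly, while infinite runs split into infinitely many serious steps versus a tail of null/backtracking steps. Two preliminary facts are needed first.

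\textbf{Preliminary facts.}
\begin{enumerate}
\item The paper announces Lemma~\ref{lem:bound_tk}: $t_k\geq \underbar{t}>0$ uniformly. To prove it, I would use the $L_i$-smoothness in Assumption~\ref{assumptions}(c) in the form $|f_i(x)-f_i(\hat x)-\nabla f_i(\hat x)(x-\hat x)|\leq \tfrac{L_i}{4}\|x-\hat x\|^2$, together with the Lipschitz constant $L_h$ of $h$. This gives
\[
|h(F(x_{k+1}))-h(z_{k+1})|\leq L_h\big(\textstyle\sum_i L_i^2\big)^{1/2}\tfrac14\|x_{k+1}-\hat x_k\|^2 \leq c\,t_kv_k ,
\]
where the last step uses \eqref{vk}. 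Hence once $t_k<\kappa/(2c)$, passing the test on Line~\ref{call-oracle-c} forces the test on Line~\ref{call-oracle-hc} to pass as well. So backtracking cannot push $t_k$ below $\min\{\underbar{t},\kappa/(2c\tau)\}$.
\item Serious steps decrease the objective, so all $\hat x_k$ stay in the bounded level set. By \eqref{vk} and $v_k\leq f_0(\hat x_k)+h(F(\hat x_k))-\inf$, the sequence $x_{k+1}$ is bounded, hence so is $z_{k+1}$. The uniform bound \eqref{uniform_bound} then bounds $y_{k+1}$.
\end{enumerate}

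\textbf{Finite stop with $\tol=0$.} Then $v_k=0$, so $x_{k+1}=\hat x_k$ by \eqref{vk}. The argument right before the theorem already gives $0\in S(\hat x_k,y_{k+1},F(\hat x_k))$, using $\partial\modelh_k(F(\hat x_k))\subset\partial h(F(\hat x_k))$.

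\textbf{Infinitely many serious steps.} Each serious step lowers the objective by at least $\tfrac{\kappa}{2}v_k$, and the objective is bounded below on the compact level set. Summing gives $v_k\to0$ along serious indices, so $\|x_{k+1}-\hat x_k\|^2\leq 2t_{\max}v_k\to0$. For approximate stationarity I would read off \eqref{eqn:MP} the residual triple $(F(x_{k+1})-z_{k+1},\;\cdot\;,\;\cdot\;)$, bounding each component as follows.
\begin{itemize}
\item First component: it is $O(\|x_{k+1}-\hat x_k\|^2)$.
\item Third component: replacing $\nabla F(\hat x_k)$ by $\nabla F(x_{k+1})$ costs $\|y_{k+1}\|\,O(\|x_{k+1}-\hat x_k\|)$, and the term $(x_{k+1}-\hat x_k)/t_k$ is bounded by $\sqrt{2v_k/t_k}\leq\sqrt{2v_k/\underbar{t}}$.
\item Second component (the subtle one): $y_{k+1}\in\partial\modelh_k(z_{k+1})$ is only a model subgradient. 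Since $\modelh_k\leq h$, I would show $y_{k+1}\in\partial_{\epsilon_k}h(z_{k+1})$ with $\epsilon_k=h(z_{k+1})-\modelh_k(z_{k+1})$, and bound $\epsilon_k\lesssim v_k+O(\|x_{k+1}-\hat x_k\|)$ via \eqref{model=c} and Lipschitzness. Then the Brøndsted–Rockafellar theorem produces $\tilde z$ near $z_{k+1}$ and $\tilde y$ near $y_{k+1}$ with $\tilde y\in\partial h(\tilde z)$, both within $\sqrt{\epsilon_k}$.
\end{itemize}
Passing to cluster points, via Theorem~\ref{thm:optim}(ii) with constant approximations or directly by outer semicontinuity of $\partial h$, yields \eqref{eqn:optcond1}. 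The same residual bounds, expressed through $\tol$, $\underbar{t}$ and the constants, give $\epsilon_\tol\to0$ in the $\tol>0$ case.

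\textbf{Tail of null/backtracking steps (main obstacle).} Here $\hat x_k=\bar x$ is fixed. Backtracking can occur only finitely often, because $t_k$ is bounded below and each backtrack divides $t_k$ by $\tau$. So eventually only null steps occur, with $t_k$ nonincreasing and converging to some $t>0$. I would follow the standard Correa–Lemaréchal argument, using \eqref{model_req}. The aggregate linearization $\modelh_k(z_{k+1})+\langle y_{k+1},\cdot-z_{k+1}\rangle\leq\modelh_{k+1}$ shows that the master optimal values $\Phi_k$ are nondecreasing, bounded above by $f_0(\bar x)+h(F(\bar x))$, and satisfy
\[
\Phi_{k+1}\geq\Phi_k+\tfrac{1}{2t_k}\|x_{k+2}-x_{k+1}\|^2 ,
\]
by strong convexity of the master problem. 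Hence $x_{k+2}-x_{k+1}\to0$. The cut $h(z_{k+1})+\langle s_{k+1},\cdot-z_{k+1}\rangle$ then gives
\[
h(z_{k+1})-\modelh_{k+1}(z_{k+2})\leq L_h\|z_{k+2}-z_{k+1}\|\to0 .
\]
The null-step test failing means $f_0(x_{k+1})+h(z_{k+1})>f_0(\bar x)+h(F(\bar x))-\kappa v_k$. Combining this with the definition of $v_k$ gives $(1-\kappa)v_k\lesssim h(z_{k+1})-\modelh_k(z_{k+1})\to0$, so $v_k\to0$ and $\liminf v_k=0$. Stationarity of $\bar x$ then follows exactly as in the serious case. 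The care needed is in the index shifts and the monotonicity of $t_k$ required for $\Phi_k$ to increase.

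For $\tol>0$, finite termination follows because $\liminf v_k=0$ in every infinite scenario.
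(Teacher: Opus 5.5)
Your proposal is correct, and it departs from the paper's proof in two places.

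First, the tail of null steps. The paper does not argue directly. It recasts the inner iterations as a convex proximal bundle method for \eqref{aux1} with models $\mathfrak{w}_k$, checks that \eqref{model_req} yields the two Correa--Lemar\'echal model conditions in $x$-space, and cites \cite[Theorem 4.4]{Correa_Lemarechal_1993} to obtain $v_k\to0$, $x_{k+1}\to\hat x$, and that $\hat x$ solves \eqref{aux1}. You instead reprove that result in this setting. Your ingredients are: monotone master values from the aggregate cut together with $t_{k+1}\le t_k$; vanishing steps; the cut at $z_{k+1}$ combined with $L_h$; and the failed test, which gives $(1-\kappa)v_k< h(z_{k+1})-\modelh_k(z_{k+1})$. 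This makes the argument self-contained, at the cost of handling the index shifts yourself.

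Second, the stationarity certificate. The paper uses Lemma~\ref{lem:approx-sub}, which gives $y_{k+1}\in\partial_{e_k}h(\hat z_k)$ with $e_k\le M\|z_{k+1}-\hat z_k\|$ via \eqref{uniform_bound}. It then applies Proposition~\ref{prop:vk}, whose bound $\epsilon_{k+1}$ is measured against the limit objects $y^\star$ and $z^\star=F(x^\star)$ and vanishes only along a convergent subsequence. Your route places the $\epsilon$-subgradient at $z_{k+1}$, with $\epsilon_k\le v_k+O(\|x_{k+1}-\hat x_k\|)$, and then applies Br{\o}ndsted--Rockafellar. This certifies the actual returned iterate with the pair $(y_{k+1},\tilde z)$ and gives an explicit bound of order $\tol^{1/4}$. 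That is a cleaner justification of the $\tol>0$ claim, where no limit point is available, at the price of invoking one more theorem. Your lower bound $\min\{\underbar{t},\kappa/(2c\tau)\}$ is also slightly more careful than the paper's $t_{\min}$, which ignores the initial value of $\underbar{t}$.

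One step is mis-justified, though it is not load-bearing. The bound $v_k\le f_0(\hat x_k)+h(F(\hat x_k))-\inf$ does not follow, because $f_0(x_{k+1})+\modelh_k(z_{k+1})$ is a model value, not the objective evaluated at a point of $X$. You never actually need global boundedness of $\{x_{k+1}\}$. Along the subsequences you use, $x_{k+1}-\hat x_k\to0$, and at a stop $\|x_{k+1}-\hat x_k\|^2\le 2t_{\max}\tol$. If you want the global statement anyway, the cut at $\hat z_k$ and convexity of $f_0$ give $\|x_{k+1}-\hat x_k\|\le 2t_{\max}\|\nabla f_0(\hat x_k)+\nabla F(\hat x_k)^\top\hat s_k\|$. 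In addition, $\|y_{k+1}\|\le L_h$ holds automatically, since $\modelh_k\le h$ and $h$ is globally $L_h$-Lipschitz.
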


When $\tol > 0$, the above theorem asserts that the last iterate $x_{k+1}$ computed by the algorithm satisfies~\eqref{eqn:tol} for some $y^\star$, $z^\star$, and $\epsilon_\tol$. None of these three elements need to be computed; their mere existence suffices for the analysis of Algorithm~\ref{algo}. Additional comments can be found in Remark~\ref{rem:WeDontComputeEps} below.
To prove Theorem~\ref{thm:conv} we need a series  of auxiliary results. Throughout this subsection, we employ the notation, with $\col{x_k, k \in N^a}$ the iterates produced by Algorithm~\ref{bundlemethod}:
\begin{subequations}\label{zjj}
\begin{align}
z_{j+1}& =F(\hat x_j)+\nabla F(\hat x_j) (x_{j+1}-\hat x_j), \quad s_{j+1} \in \partial h(z_{j+1}), \\
\hat z_j& = F(\hat x_j), \mbox{ and } \hat s_j \in \partial h(\hat z_j), \quad j = 0, 1, \ldots, k.
\end{align}
\end{subequations}
We start by relating subgradients of $\modelh_k$ with those of $h$.

\begin{lemma}\label{lem:approx-sub} Suppose that Assumption~\ref{assumptions} holds. If $y_{k+1}\in \partial \modelh_k(z_{k+1})$, then $y_{k+1}\in \partial_{e_k} h(\hat z_{k})$, with 
\[
e_k = h(\hat z_{k}) - \big(\modelh_k(z_{k+1}) + \inner{y_{k+1}}{\hat z_k - z_{k+1}}\big) \geq 0.
\]
Let $N\subset N^a$ be an index set. If $\col{z_k,k \in N}$ is bounded and \eqref{uniform_bound} holds, then there exists $M>0$ such that $e_k\leq M\norm{z_{k+1}-\hat z_k}$ for all $k\in N$.
\end{lemma}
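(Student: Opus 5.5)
The first claim follows from the subgradient inequality for the convex model $\modelh_k$ combined with the lower-model property $\modelh_k\leq h$. Since $y_{k+1}\in\partial\modelh_k(z_{k+1})$, for every $z\in\Re^m$ we have
\[
h(z)\geq \modelh_k(z)\geq \modelh_k(z_{k+1})+\inner{y_{k+1}}{z-z_{k+1}}.
\]
We then rewrite the right-hand side around $\hat z_k$:
\[
\modelh_k(z_{k+1})+\inner{y_{k+1}}{z-z_{k+1}} = h(\hat z_k)+\inner{y_{k+1}}{z-\hat z_k} - e_k.
\]
This is exactly the $e_k$-subgradient inequality, so $y_{k+1}\in\partial_{e_k}h(\hat z_k)$. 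For nonnegativity, evaluate the first chain at $z=\hat z_k$; it gives $h(\hat z_k)\geq \modelh_k(z_{k+1})+\inner{y_{k+1}}{\hat z_k-z_{k+1}}$, that is, $e_k\geq 0$.

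For the bound, we use that every model built by Algorithm~\ref{bundlemethod} dominates the linearization of $h$ at the current stability center. In the serious-step branch this is stated explicitly. In the null and backtracking branches it is the first line of \eqref{model_req}, since $\hat x_{k+1}=\hat x_k$ there. The initial model needs a short check: if $\modelh_0$ is built from the linearization at $x_0$, this holds, and otherwise I would use \eqref{model=c}, i.e.\ $\modelh_k(\hat z_k)=h(\hat z_k)$, which is all that is needed. Either way, $\modelh_k(\hat z_k)=h(\hat z_k)$. Therefore
\[
e_k=\modelh_k(\hat z_k)-\modelh_k(z_{k+1})-\inner{y_{k+1}}{\hat z_k-z_{k+1}}.
\]
By convexity of $\modelh_k$, taking any $g\in\partial\modelh_k(\hat z_k)$,
\[
\modelh_k(\hat z_k)-\modelh_k(z_{k+1})\leq \inner{g}{\hat z_k-z_{k+1}}.
\]
Hence
\[
e_k\leq \big(\norm{g}+\norm{y_{k+1}}\big)\norm{z_{k+1}-\hat z_k}.
\]

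It remains to bound $\norm{g}$ and $\norm{y_{k+1}}$ uniformly over $k\in N$ through \eqref{uniform_bound}. For this we need a bounded set $C$ containing both $\{z_{k+1}\}$ and $\{\hat z_k\}$. Boundedness of $\{z_k, k\in N\}$ is assumed; I read it as covering $z_{k+1}$, adjusting the index shift if necessary. For $\{\hat z_k\}$, I would argue that all stability centers lie in the level set $\{x\in X\mid f_0(x)+h(F(x))\leq f_0(x_0)+h(F(x_0))\}$, because the objective only decreases at serious steps. Continuity of $F$ then makes $\hat z_k=F(\hat x_k)$ bounded. Strictly, this uses level-boundedness, which is not listed among the lemma's hypotheses. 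As a fallback that avoids it, one can note that $\hat z_k$ can be written as $z_{j+1}$-type points, or simply include $\{\hat z_k\}$ in the bounded-set hypothesis as used in the applications. With $C$ bounded, \eqref{uniform_bound} gives $M_C$ with $\norm{g},\norm{y_{k+1}}\leq M_C$, and $M=2M_C$ completes the proof.

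The main obstacle is this boundedness of $\hat z_k$: deciding which hypothesis guarantees it cleanly, and confirming that $\modelh_k(\hat z_k)=h(\hat z_k)$ holds at $k=0$.
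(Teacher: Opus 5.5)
Your argument is essentially the paper's: the first part is the same subgradient-inequality computation, and for the bound the paper likewise uses $\modelh_k(\hat z_k)=h(\hat z_k)$ from \eqref{model=c} and obtains $e_k\leq 2M'\norm{z_{k+1}-\hat z_k}$ from \eqref{uniform_bound}. The only difference is that the paper phrases this as an $M'$-Lipschitz bound for $\modelh_k$ on $C$, whereas you use a subgradient inequality at $\hat z_k$, which is a harmless variant that does not even need $C$ convex. The paper's proof also tacitly needs $\hat z_k\in C$; this is supplied where the lemma is applied, since Proposition~\ref{prop:vk} first shows that both $\{z_{k+1},k\in N\}$ and $\{\hat z_k,k\in N\}$ are bounded. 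So your option of adding this to the hypothesis is the right reading. Drop the suggestion that the $\hat z_k$ are ``$z_{j+1}$-type points'', because $\hat z_k=F(x_{j+1})$ is not the linearized value $z_{j+1}=F(\hat x_j)+\nabla F(\hat x_j)(x_{j+1}-\hat x_j)$.
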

\begin{proof}
The subgradient inequality gives, for all $z \in \Re^m$,
\[
h(z)\geq \modelh_k(z)\geq \modelh_k(z_{k+1}) + \inner{y_{k+1}}{z - z_{k+1}}
= h(\hat z_{k}) + \inner{y_{k+1}}{z - \hat z_{k}} - e_k.
\]
Nonnegativity of $e_k$ follows from the inequality $\modelh_k\leq h$ and the fact that $y_{k+1}\in \partial \modelh_k(z_{k+1})$. Furthermore, if $\col{z_k,k\in N}$ is contained in a bounded set $C$, condition~\eqref{uniform_bound} ensures the existence of $M'>0$ such that all subgradients of $\modelh_k$ at any $z\in C$ are bounded by $M'$. As a result, $\norm{y_{k+1}}\leq M'$ for all $k\in N$ and 
$|\modelh_k(z)-\modelh_k(z')|\leq M'\norm{z-z'}$ for all $z,z'\in C$ and all $k\in N$.
As $h(\hat z_{k})=\modelh_k(\hat z_{k})$  (cf. \eqref{model=c}), we have that
\begin{align*}
e_k&= h(\hat z_{k}) - [\modelh_k(z_{k+1}) + \inner{y_{k+1}}{\hat z_k - z_{k+1}}]\\
&= h(\hat z_{k})-\modelh_k(\hat z_{k}) - [\modelh_k(z_{k+1})-\modelh_k(\hat z_{k}) + \inner{y_{k+1}}{\hat z_k - z_{k+1}}] \\
&\leq |\modelh_k(z_{k+1})-\modelh_k(\hat z_{k})|+\norm{y_{k+1}}\norm{z_{k+1} -\hat z_k}\\
& \leq 2M'\norm{z_{k+1} -\hat z_k}\,. 
\end{align*}
The result follows with $M=2M'$.
\mybox
\end{proof}

The following lemma is used to prove Lemma~\ref{lem:bound_tk} below. 

\begin{lemma}\label{lem:small_tk}
Under Assumption~\ref{assumptions}, let $L_F:= \sqrt{\sum_{i=1}^m L_i^2}$.
If $t_k \in (0,\frac{\kappa}{2L_hL_F})$ and 
\begin{equation}\label{armijo2}
f_0(x_{k+1})+ h(z_{k+1})\leq f_0(\hat x_{k})+ h(F(\hat x_k))-  \kappa v_k,
\end{equation}  then the descent test
$
f_0(x_{k+1}) +h(F(x_{k+1})) \leq f_0(\hat x_{k})+ h(F(\hat x_k))-  \frac{\kappa}{2} v_k
$ 
holds.
\end{lemma}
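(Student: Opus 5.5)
The plan is to bound the gap between $h(F(x_{k+1}))$ and $h(z_{k+1})$ by the linearization error of $F$, and then absorb that error into the predicted decrease $v_k$ using the key relation \eqref{vk} together with the smallness of $t_k$.

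\textbf{Step 1 (error of the linearization of $F$).} Both $x_{k+1}$ and $\hat x_k$ lie in the convex set $X$, so the segment between them lies in $X$. By Assumption~\ref{assumptions}(c), each $\nabla f_i$ is Lipschitz on $X$ with constant $\tfrac12 L_i$. The standard descent-lemma argument (integrate $\nabla f_i$ along the segment) then gives
\[
\big|f_i(x_{k+1}) - f_i(\hat x_k) - \inner{\nabla f_i(\hat x_k)}{x_{k+1}-\hat x_k}\big| \leq \tfrac14 L_i\norm{x_{k+1}-\hat x_k}^2 .
\]
Since $z_{k+1}$ is the linearization in \eqref{eqn:MP1}, summing the squares over $i$ yields
\[
\norm{F(x_{k+1}) - z_{k+1}} \leq \tfrac14 L_F \norm{x_{k+1}-\hat x_k}^2 .
\]

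\textbf{Step 2 (transfer through $h$).} By Assumption~\ref{assumptions}(d), $h$ is $L_h$-Lipschitz, so
\[
h(F(x_{k+1})) \leq h(z_{k+1}) + \tfrac14 L_hL_F\norm{x_{k+1}-\hat x_k}^2 .
\]
Relation \eqref{vk} gives $\norm{x_{k+1}-\hat x_k}^2 \leq 2t_k v_k$. Hence
\[
h(F(x_{k+1})) \leq h(z_{k+1}) + \tfrac12 L_hL_F t_k v_k < h(z_{k+1}) + \tfrac{\kappa}{4} v_k,
\]
where the last inequality uses $t_k < \kappa/(2L_hL_F)$ and $v_k \geq 0$, which also follows from \eqref{vk}.

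\textbf{Step 3 (conclude).} Add $f_0(x_{k+1})$ to both sides and invoke \eqref{armijo2}. This gives
\[
f_0(x_{k+1}) + h(F(x_{k+1})) \leq f_0(\hat x_k) + h(F(\hat x_k)) - \kappa v_k + \tfrac{\kappa}{4} v_k ,
\]
and the right-hand side is at most $f_0(\hat x_k) + h(F(\hat x_k)) - \tfrac{\kappa}{2} v_k$. The constants have some slack, so even a cruder Taylor bound (for example $\tfrac12 L_i$ in place of $\tfrac14 L_i$) would still close the argument.

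\textbf{Main obstacle.} The only delicate point is Step 1. It needs the Lipschitz bound on $\nabla f_i$ to hold along the whole segment $[\hat x_k, x_{k+1}]$, which requires convexity of $X$ and the fact that $x_{k+1}\in X$; the latter follows from \eqref{eqn:MP3}, since $N_X(x_{k+1})$ would otherwise be empty. The second ingredient is aggregating the componentwise errors into a Euclidean norm bound with constant $L_F$. The remaining steps are routine.
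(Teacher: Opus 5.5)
Your proof is correct and follows the paper's argument. It bounds $\|F(x_{k+1})-z_{k+1}\|_2$ by the quadratic linearization error of $F$, passes that bound through the $L_h$-Lipschitz continuity of $h$, and absorbs it into $v_k$ using \eqref{vk} and the bound on $t_k$. Your constant $\tfrac14 L_F$ is sharper than the paper's $\tfrac12 L_F$, which only adds slack. The strict inequality in Step 2 should be $\leq$, since $v_k=0$ is possible.
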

\begin{proof}
By Assumption~\ref{assumptions}(c) and the mean value theorem, we obtain that 
\begin{equation}\label{L-smooth}
\big\|F(x')+\nabla F(x') (x-x') - F(x)\big\|_2\\
\leq \displaystyle \tfrac{1}{2}L_F\norm{x- x'}^2\quad \forall x,x' \in X.
\end{equation}
Items (c) and (d) in Assumption~\ref{assumptions} give, thanks to \eqref{L-smooth}, the relation
\begin{align*}
|h(F(\hat x_k)+\nabla F(\hat x_k) (x_{k+1}-\hat x_k))-h(F(x_{k+1}))|&\leq L_h\norm{F(\hat x_k)+\nabla F(\hat x_k) (x_{k+1}-\hat x_k) - F(x_{k+1})}\\
&\leq \frac{1}{2}L_h L_F\norm{x_{k+1}-\hat x_k}^2,
\end{align*}
i.e.,
$
h(F(x_{k+1})) \leq h(F(\hat x_k)+\nabla F(\hat x_k) (x_{k+1}-\hat x_k)) +\frac{L_h L_F}{2}\norm{x_{k+1}-\hat x_k}^2 .
$
Adding $f_0(x_{k+1})$ to both sides of this inequality and recalling \eqref{armijo2} we obtain
\begin{align*}
f_0(x_{k+1}) +h(F(x_{k+1}))& \leq f_0(x_{k+1})+ h(F(\hat x_k)+\nabla F(\hat x_k) (x_{k+1}-\hat x_k)) +\frac{L_h L_F}{2}\norm{x_{k+1}-\hat x_k}^2 \\
&\leq f_0(\hat x_{k})+ h(F(\hat x_k)) -\kappa  v_k + \frac{L_h L_F}{2}\norm{x_{k+1}-\hat x}^2.
\end{align*}
We now use~\eqref{vk} to get
\begin{align*}
f_0(x_{k+1}) +h(F(x_{k+1}))& \leq 
f_0(\hat x_{k})+ h(F(\hat x_k)) + (L_h L_Ft_k-\kappa)v_k.
\end{align*}
As $0<t_k\leq \frac{\kappa}{2L_hL_F}$, then $(L_h L_Ft_k-\kappa) \leq -\frac{\kappa}{2}$ and the
result follows. \mybox 
\end{proof}
\begin{lemma}\label{lem:bound_tk}
Consider Algorithm~\ref{bundlemethod} applied to problem~\eqref{pbm} under Assumption~\ref{assumptions}.
Then the sequence of prox-parameters yielded by the algorithm is bounded away from zero: $t_k \in [t_{\min},\,t_{\max}]$ for all $k$, with $t_{\min}= \frac{\kappa}{2\tau L_hL_F}>0$ and $t_{\max}$ is given.
\end{lemma}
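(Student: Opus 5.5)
The upper bound $t_k\le t_{\max}$ comes straight from the update rules of Algorithm~\ref{bundlemethod}. I would argue it by induction on $k$. The base case is the data requirement $t_0<t_{\max}$. At a serious step, $t_{k+1}$ is chosen in $[t_k,t_{\max}]$. At a null step, $t_{k+1}\in[\underbar{t},t_k]$, so $t_{k+1}\le t_k$. At a backtracking step, $t_{k+1}=t_k/\tau<t_k$ because $\tau>1$. So $t_{k+1}\le t_{\max}$ in every case.

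The lower bound is the substantive part. The plan is to show that the running value of $\underbar{t}$ never falls below $t_{\min}=\kappa/(2\tau L_hL_F)$, and that $t_k\ge\underbar{t}$ holds throughout.

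\textbf{Step 1: $t_k\ge\underbar{t}$ for the current value of $\underbar{t}$.} I would prove this by induction.
\begin{itemize}
\item Initially $t_0>\underbar{t}$.
\item At a serious step, $t_{k+1}\ge t_k\ge\underbar{t}$.
\item At a null step, $t_{k+1}\in[\underbar{t},t_k]$ by construction.
\item At a backtracking step, $\underbar{t}$ is reset to $\min\{t_k/\tau,\underbar{t}\}$, which is $\le t_k/\tau=t_{k+1}$.
\end{itemize}

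\textbf{Step 2: $\underbar{t}$ only decreases at backtracking steps, and only when $t_k$ is not too small.} A backtracking step means the test \eqref{armijo2} on Line~\ref{call-oracle-c} succeeded while the descent test on Line~\ref{call-oracle-hc} failed. By the contrapositive of Lemma~\ref{lem:small_tk}, this forces $t_k\ge \kappa/(2L_hL_F)$. Then $t_k/\tau\ge \kappa/(2\tau L_hL_F)=t_{\min}$.

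Hence every update of $\underbar{t}$ gives a value at least $\min\{\text{previous }\underbar{t},\,t_{\min}\}$. By induction, $\underbar{t}\ge\min\{\underbar{t}_{\text{initial}},t_{\min}\}$ at all times. Combining with Step~1 gives $t_k\ge\min\{\underbar{t}_{\text{initial}},t_{\min}\}$ for all $k$.

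\textbf{Step 3: absorbing the initial $\underbar{t}$.} The remaining point, and the only mild obstacle, is that the statement names exactly $t_{\min}$, whereas the argument so far gives $\min\{\underbar{t}_{\text{initial}},t_{\min}\}$. This minimum is still a positive, $k$-independent constant, which is what boundedness away from zero requires. To match the stated constant, I would either read $t_{\min}$ as this minimum, or note that the conclusion is exact whenever the data satisfy $\underbar{t}\ge t_{\min}$.

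One technical check is needed. Lemma~\ref{lem:small_tk} is stated for $t_k$ in the open interval $(0,\kappa/(2L_hL_F))$, while its proof works for $t_k\le\kappa/(2L_hL_F)$. The contrapositive used in Step~2 therefore yields $t_k\ge\kappa/(2L_hL_F)$, and either form of the inequality suffices there.
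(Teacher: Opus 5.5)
Your proof is correct and follows the paper's argument. Both use Lemma~\ref{lem:small_tk} to show that a backtracking step can only occur when $t_k\ge\kappa/(2L_hL_F)$, so $\underbar{t}$ is never lowered below $t_{\min}$, and both combine this with $t_k\ge\underbar{t}$. Your Step~3 caveat is also legitimate: the paper's proof tacitly assumes the initial $\underbar{t}$ is at least $t_{\min}$, yet the data $0<\underbar{t}<t_0$ allow $t_0<t_{\min}$, so in general the valid bound is $\min\{\underbar{t}_{\text{initial}},t_{\min}\}$, which is all that Proposition~\ref{prop:vk} and Theorem~\ref{thm:conv} actually need.
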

\begin{proof}
Observe that $t_k\geq \underbar{t}$ for all $k$, and $\underbar{t}$ is only reduced at Line~\ref{line:decrease_t} whenever $t_k/\tau < \underbar{t}$. 
We claim the algorithm cannot decrease $\underbar{t}$ even further once $\underbar{t}\leq \frac{\kappa}{2L_hL_F}$. Indeed, in this case, if the algorithm decreases $t_k$ so that $t_k \in [\underbar{t},\, \frac{\kappa}{2L_hL_F}]$, then by Lemma~\ref{lem:small_tk} a new serious step is produced. This prevents from decreasing $\underbar{t}$ even further.
Therefore, due to the rule used at Line~\ref{line:decrease_t} of the algorithm, we conclude that $\underbar{t}\geq \frac{\kappa}{2\tau L_hL_F}$, and the result follows.
\mybox
\end{proof}
Given that the sequence of prox-parameters is bounded away from zero, we now show that $v_k$ can be seen as a measure of stationarity, and thus $v_k\leq \tol$ is a valid stopping test.

\begin{proposition}{\rm (vanishing tolerance in Algorithm~\ref{bundlemethod}).}\label{prop:vk}
Consider Algorithm~\ref{bundlemethod} with $\tol=0$ applied to problem~\eqref{pbm} under Assumption~\ref{assumptions}.
Suppose that~\eqref{uniform_bound} holds and there exists an infinite index set $N \subset N^a$ such that
\[
\hat x_{k} \Nto  x^\star \quad \text{ and } \quad  v_k \Nto 0.
\]
Then $x_{k+1} \Nto x^\star$ and the subsequence $\col{y_{k+1}, k\in N}$ is bounded. 
Let $y^\star$ be a cluster point of this subsequence, $z^\star= F( x^\star)$,  and define
  \begin{align*}
\epsilon_{k+1}
&=\sqrt{\norm{F(x_{k+1})-z^\star}^2 +\frac{1}{t_{\min}^2}\norm{x_{k+1}-\hat x_k}^2  + \norm{\nabla F(x_{k+1})^\top y^\star - \nabla F(\hat x_{k})^\top  y_{k+1} }^2}.
\end{align*}
Then $\dist(0,S(x_{k+1},y^\star,z^\star)) \leq {\epsilon_{k+1}}$,
$\nliminf_{k\in N} \epsilon_{k+1}=0$, and $x^\star $ is stationary, i.e., $0\in S(x^\star, y^\star,  z^\star)$. 
\end{proposition}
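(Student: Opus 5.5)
By \eqref{vk} and Lemma~\ref{lem:bound_tk}, $\norm{x_{k+1}-\hat x_k}^2 \leq 2t_k v_k \leq 2t_{\max} v_k \Nto 0$. Combined with $\hat x_k\Nto x^\star$, this gives $x_{k+1}\Nto x^\star$. The points $z_{k+1}=F(\hat x_k)+\nabla F(\hat x_k)(x_{k+1}-\hat x_k)$ therefore converge along $N$ to $F(x^\star)=z^\star$, since $F$ and $\nabla F$ are continuous. In particular they lie in a bounded set $C$. Condition \eqref{uniform_bound} then gives $\norm{y_{k+1}}\leq M_C$ for $k\in N$, so $\{y_{k+1},k\in N\}$ is bounded. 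Let $y^\star$ be a cluster point along some $N'\subset N$.

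Next I will bound $\dist(0,S(x_{k+1},y^\star,z^\star))$ one component at a time, using the structure \eqref{eqn:S} with $D_i=\{\nabla f_i\}$. The first component equals $F(x_{k+1})-z^\star$ exactly. For the second, I need a point of $\partial h(z^\star)-y^\star$. The cleanest choice is to show $y^\star\in\partial h(z^\star)$, which makes this component zero. By Lemma~\ref{lem:approx-sub}, $y_{k+1}\in\partial_{e_k}h(\hat z_k)$ with $e_k\leq M\norm{z_{k+1}-\hat z_k}\leq M\norm{\nabla F(\hat x_k)}\norm{x_{k+1}-\hat x_k}\to 0$. Passing to the limit along $N'$ in the $\epsilon$-subgradient inequality $h(z)\geq h(\hat z_k)+\inner{y_{k+1}}{z-\hat z_k}-e_k$, using continuity of $h$ and $\hat z_k\to z^\star$, gives $y^\star\in\partial h(z^\star)$.

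For the third component, I rewrite \eqref{eqn:MP3} as
\[
-\tfrac{x_{k+1}-\hat x_k}{t_k} - \nabla F(\hat x_k)^\top y_{k+1} + \nabla F(x_{k+1})^\top y^\star \in \nabla f_0(x_{k+1})+\nabla F(x_{k+1})^\top y^\star + N_X(x_{k+1}).
\]
The left side is an element of the third factor of $S(x_{k+1},y^\star,z^\star)$. Its norm is at most $\norm{x_{k+1}-\hat x_k}/t_{\min}+\norm{\nabla F(x_{k+1})^\top y^\star-\nabla F(\hat x_k)^\top y_{k+1}}$. The claimed bound uses the sum of squares, so I will apply the triangle inequality inside each component and then collect terms to match $\epsilon_{k+1}$. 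If the constants do not match exactly, I will absorb them by a factor $\sqrt2$, or split the third component more carefully.

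The final step is to show $\epsilon_{k+1}\to0$ along $N'$, which gives $\liminf_{k\in N}\epsilon_{k+1}=0$. We have $F(x_{k+1})\to z^\star$, $\norm{x_{k+1}-\hat x_k}\to 0$, $\nabla F(x_{k+1})\to\nabla F(x^\star)$, $\nabla F(\hat x_k)\to \nabla F(x^\star)$, and $y_{k+1}\to y^\star$, so every term of $\epsilon_{k+1}$ vanishes. Stationarity, $0\in S(x^\star,y^\star,z^\star)$, follows either directly from the limit of the inclusions, by outer semicontinuity of $N_X$ (Proposition 6.6 of \cite{Rockafellar_Wets_1998}), or by invoking Theorem~\ref{thm:optim}(ii) with the constant sequence $S^\nu=S$. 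I expect the main obstacle to be the second component: identifying $y^\star\in\partial h(z^\star)$ depends on the model only through Lemma~\ref{lem:approx-sub}, and that step needs the boundedness of the $z_k$ established above.
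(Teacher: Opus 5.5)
Your argument follows the paper's proof step for step. You get $x_{k+1}\Nto x^\star$ from \eqref{vk} and $t_k\le t_{\max}$, and boundedness of $\{y_{k+1},k\in N\}$ from \eqref{uniform_bound}. You get $y^\star\in\partial h(z^\star)$ from Lemma~\ref{lem:approx-sub} with $e_k\to0$; the paper just cites outer semicontinuity of the $\epsilon$-subdifferential, and your limit in the $e_k$-subgradient inequality proves exactly that. Finally, you bound $\dist(0,S(x_{k+1},y^\star,z^\star))$ factor by factor using \eqref{eqn:MP3}.

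The only divergence is the constant in the third factor, and there you should resolve your hedge in favor of the $\sqrt2$. Set $a=(x_{k+1}-\hat x_k)/t_k$ and $b=\nabla F(x_{k+1})^\top y^\star-\nabla F(\hat x_k)^\top y_{k+1}$. The element furnished by \eqref{eqn:MP3} is $b-a$. The paper bounds $\norm{b-a}^2$ by $\norm{x_{k+1}-\hat x_k}^2/t_{\min}^2+\norm{b}^2$, which silently discards the cross term $-2\inner{a}{b}$. That step is invalid. The unscaled inequality $\dist(0,S(x_{k+1},y^\star,z^\star))\le\epsilon_{k+1}$ can genuinely fail, so no finer splitting of the third factor will recover it.

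For a counterexample, take $n=m=1$, $X=\reals$, $f_0\equiv0$, $F(x)=-\cos x$ (so $L_F=2$), $h(z)=z$ with $\modelh_k=h$, and $x_0=\pi-0.1$. Run with $t_k\equiv t_{\min}=\kappa/(4\tau)$. This is admissible with $\underline{t}<t_0=t_{\min}$, because every step is serious: the model is exact, so the first test holds, and Lemma~\ref{lem:small_tk} gives the second. Then:
\begin{itemize}
\item $x_{k+1}=\hat x_k-t_{\min}\sin\hat x_k$ and $\hat x_k\to x^\star=0$;
\item $v_k=t_{\min}\sin^2\hat x_k\to0$ and $y_{k+1}=y^\star=1$;
\item a direct computation gives $\epsilon_{k+1}^2-\dist^2(0,S(x_{k+1},y^\star,z^\star))=2\sin\hat x_k\,(\sin\hat x_k-\sin x_{k+1})$.
\end{itemize}
This quantity is negative for every early iteration with $x_{k+1}\ge\pi/2$, since $\sin$ is decreasing on $[\pi/2,\pi]$ and $x_{k+1}<\hat x_k$.

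What your triangle-inequality route correctly yields is $\dist(0,S(x_{k+1},y^\star,z^\star))\le\sqrt2\,\epsilon_{k+1}$. Alternatively, you get the unscaled bound after redefining $\epsilon_{k+1}$ with $(\norm{x_{k+1}-\hat x_k}/t_{\min}+\norm{b})^2$ in place of the two separate squares. Nothing downstream is affected. The conclusion $\nliminf_{k\in N}\epsilon_{k+1}=0$, the stationarity of $x^\star$, and the use of the proposition in Remark~\ref{rem:WeDontComputeEps} and Algorithm~\ref{algo} all need only an error bound that vanishes.
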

\begin{proof}
It follows from~\eqref{vk} and the fact that $t_k\leq t_{\max}<\infty$ for all $k$ that the condition $v_k \Nto 0$
implies $\lim_{k \in N} \hat x_{k} =  \lim_{k \in N} x_{k+1}= x^\star \in X$.
From~\eqref{eqn:MP1}, continuity of $F$ and $\nabla F$, we get that
\begin{align*}
\lim_{k \in N} z_{k+1}=\lim_{k \in N} [F(\hat x_k)+\nabla F(\hat x_k) (x_{k+1}-\hat x_k)] = F(x^\star) = z^\star\\
\lim_{k \in N} \hat z_{k}=\lim_{k \in N} F(\hat x_k)=\lim_{k \in N} F( x_{k+1})  = z^\star\\
\lim_{k \in N} \nabla F(\hat x_k)=\lim_{k \in N} \nabla F(x_{k+1}) = \nabla F(x^\star).
\end{align*}
As a result, both subsequences $\col{z_{k+1}, k \in N}$ and $\col{\hat z_{k}, k \in N}$ are bounded, and Lemma~\ref{lem:approx-sub}, under condition~\eqref{uniform_bound}, assures the existence of a constant $M>0$ such that $y_{k+1}\in \partial_{a_k} h(\hat z_{k})$ for all $k \in N$, with $a_k=M\norm{z_{k+1}-\hat z_k}$. Thus $\col{y_{k+1}, k\in N}$ is bounded and has at least a cluster point $y^\star$. The outer-semicontinuity of the approximate subdifferential asserts that  $y^\star \in  \partial h(z^\star)$. We have thus shown that  $\nliminf_{k\in N} \epsilon_{k+1}=0$.
As $D_i(x)=\col{\nabla f_i(x)}$ by Assumption~\ref{assumptions}, observe that
\begin{align*}
\dist^2(0, S(x_{k+1},y^\star,z^\star)) &
\leq \norm{F(x_{k+1})- z^\star}^2 + \dist^2(0,\partial h(z^\star) - y^\star) +\\
&\quad \dist^2(0,\nabla f_0(x_{k+1})+ \nabla F(x_{k+1})^\top  y^\star  + N_X( x_{k+1})),
\end{align*}
and $\dist^2(0,\partial h( z^\star) -y^\star) =0$. By employing~\eqref{eqn:MP3}, we can derive the following bound for the last term above:
\begin{align*}
 & \dist^2(0,\nabla f_0(x_{k+1})+ \nabla F(x_{k+1})^\top  y^\star  + N_X( x_{k+1})) \\
 &\leq \dist^2(0, -\frac{x_{k+1}-\hat x_k}{t_k}+\nabla F(x_{k+1})^\top  y^\star - \nabla F(\hat x_{k})^\top  y_{k+1}  ) \\
 &\leq \frac{1}{t_{\min}^2}\norm{x_{k+1}-\hat x_k}^2 + \norm{\nabla F(x_{k+1})^\top  y^\star - \nabla F(\hat x_{k})^\top  y_{k+1} }^2.
\end{align*}
This shows that $\dist^2(0, S(x_{k+1}, y^\star, z^\star)) \leq \epsilon_{k+1}$.
As $\nliminf_{k\in N} \epsilon_{k+1}=0$, we conclude that $0 \in S(x^\star, y^\star, z^\star)$.
\mybox
\end{proof}
\begin{remark}\label{rem:WeDontComputeEps}
Under the assumptions of Proposition~\ref{prop:vk}, if Algorithm~\ref{bundlemethod} stops at iteration $k$ with $v_k \leq \tol$, the point $x^\nu := x_{k+1}$ satisfies \eqref{eqn:tol} of Algorithm~\ref{algo} with $y^\nu := y^\star$, $z^\nu := z^\star$, and $\epsilon^\nu := \epsilon_{k+1}$. Neither $y^\nu$, $z^\nu$, nor $\epsilon^\nu$ is actually computed by Algorithm~\ref{bundlemethod} at iteration $k$. This does not pose an issue, as Algorithm~\ref{algo} only requires the existence of $y^\nu$, $z^\nu$, and $\epsilon^\nu$, and that the error $\epsilon^\nu$ asymptotically vanishes. Proposition~\ref{prop:vk} shows the latter is achieved by driving $\tol$ to zero. 
\end{remark}
We are now ready to conclude the analysis of Algorithm~\ref{bundlemethod}.

\begin{proof} {\bf of Theorem~\ref{thm:conv}}.
 If the algorithm stops at iteration $k$ with $\tol = 0$, then $v_k = 0$, which gives $x_{k+1}=\hat x_k$.
With the notation in~\eqref{zjj}, $z_{k+1}  =F(\hat x_k)+\nabla F(\hat x_k) (x_{k+1}-\hat x_k)=   F(\hat x_k)=\hat z_k$. Denote $z^\star = z_{k+1}$ and $y^\star = y_{k+1}\in \partial h(\hat z_k)$ ($= \partial h(z^\star)$).
 As a result, $\epsilon_{k+1}=0$ in Proposition~\ref{prop:vk} and we get $0\in S(x_{k+1},y_{k+1},z_{k+1})=S(\hat x_k,y_{k+1},F(\hat x_k))$,
i.e, $x_{k+1}=\hat x_k$ is stationary to problem~\eqref{pbm} under Assumption~\ref{assumptions}.

In what follows we suppose that the algorithm loops forever, i.e., $N^a=\nats$. There are two possibilities: the algorithm produces infinitely or finitely many serious steps.

\noindent \underline{Infinitely many serious steps.}
Let $\hat N \subset \nats$ be the index set
containing the iterations at which the descent test is satisfied:
\[
f_0(x_{k+1}) +h(F(x_{k+1})) \leq f_0(\hat x_{k})+ h(F(\hat x_k))-  \frac{\kappa}{2} v_k\quad \forall\, k \in \hat N.
\]
In this case $x_{k+1}$ becomes a stability center and the above inequality holds with $x_{k+1}$ replaced with $\hat x_{k+1}$.
Thus, $\col{\hat x_k, k \in \nats}$ lies in the bounded level-set $\col{x \in X\,|\, f_0(x) +h(F(x))\leq  f_0(x_0) +h(F(x_0))}$ (as $\hat x_0=x_0$). The sequence $\col{\hat x_k, k \in \nats}$  is thus bounded and a telescopic sum provides
\[
0\leq  \frac{\kappa}{2} \sum_{k \in \hat N}v_k\leq  
f_0(x_0) +h(F(x_0))-\lim_{k \in \hat N} [f_0(\hat x_{k+1}) +h(F(\hat x_{k+1}))] <\infty,
\]
showing that $\lim_{k\in \hat N } v_k= 0$. Since $t_k\geq t_{\min}$ (see Lemma~\ref{lem:bound_tk}), we have from~\eqref{vk} that $\lim_{k\in \hat N} (x_{k+1}-\hat x_k)=0$. For any cluster point $x^\star $ of $\col{\hat x_k, k \in \nats}\subset X$, we can thus extract an infinite index set $N \subset \hat N$ such that $\lim_{k\in N} x_{k+1}=x^\star=\lim_{k\in N} \hat x_{k}$. 
We can now rely on Proposition~\ref{prop:vk} to conclude that 
$0\in S(x^\star, y^\star, z^\star)$, with $ y^\star$ a cluster point of $\col{y_{k+1}, k \in N}$ and $z^\star=F( x^\star)$:  $x^\star$ is stationary to problem~\eqref{pbm} under Assumption~\ref{assumptions}.


\noindent \underline{Finitely many serious steps.} 
In this case, we first claim that inequality~\eqref{armijo2} holds only finitely many times, i.e., there are only finitely many backtracking steps. Otherwise, after the last serious step, the rule to update the prox-parameter  would drive $t_k$ to zero, which is impossible by Lemma~\ref{lem:bound_tk}. 
Therefore,
inequality~\eqref{armijo2} is never satisfied for $k$ large enough and, after a certain iteration, only null steps are produced by the algorithm. 

We will show that the last stability center $\hat x$ is stationary and $\lim_{k\to \infty} v_k=0$. To this end,
let $\hat k$ be the iteration at which the last stability center was determined. Thus, $\hat x = \hat x_{\hat k}$  is fixed and $t_{k+1}\leq t_k$ for all $k > \hat k$. The algorithm's master program becomes
\[
\{x_{k+1}\}=\nargmin_{x \in X} f_0(x) + \modelh_k\big(F(\hat x)+\nabla F(\hat x) (x-\hat x)\big) + \frac{1}{2t_k}\norm{x-\hat x}^2.
\]
We claim that, in this case, the analysis of Algorithm~\ref{bundlemethod} reduces to that of finitely many serious steps issued by a standard convex bundle method, furnished with the descent test~\eqref{armijo2}, applied to the convex problem
\begin{equation}\label{aux1}
\nnmin_{x \in X}\; f_0(x)+ w(x), \quad \mbox{with}\quad w(x)=h(F(\hat x)+\nabla F(\hat x) (x-\hat x)).
\end{equation}
Indeed, by defining $\mathfrak{w}_k(x)= \modelh_k(F(\hat x)+\nabla F(\hat x) (x-\hat x))$ for all $k$, we have that for all $x$ and $z=F(\hat x)+\nabla F(\hat x) (x-\hat x)$, condition~\eqref{model_req} yields
\begin{align*}
\mathfrak{w}_{k+1}(x)&=\modelh_{k+1}(z)\\
&\geq h(z_{k+1})+\inner{s_{k+1}}{z-z_{k+1}}\\
&=h(F(\hat x)+\nabla F(\hat x) (x_{k+1}-\hat x))+\inner{s_{k+1}}{F(\hat x)+\nabla F(\hat x) (x-\hat x) - [F(\hat x)+\nabla F(\hat x) (x_{k+1}-\hat x)]}\\
&=w(x_{k+1})+\inner{s_{k+1}}{\nabla F(\hat x) (x-x_{k+1})}=w(x_{k+1})+\inner{\nabla F(\hat x)^\top s_{k+1}}{x-x_{k+1}}\\
&=w(x_{k+1})+\inner{s^w_{k+1}}{x-x_{k+1}},
\end{align*}
with $\nabla F(\hat x)^\top s_{k+1}=:s^w_{k+1} \in \partial w(x_{k+1})$. Furthermore, due to \eqref{model_req},
\begin{align*}
\mathfrak{w}_{k+1}(x)=\modelh_{k+1}(z)
&\geq \modelh_k(z_{k+1})+\inner{y_{k+1}}{z-z_{k+1}}\\
&= \mathfrak{w}_k(x_{k+1})+\inner{\nabla F(\hat x)^\top y_{k+1}}{x-x_{k+1}}\\
&=\mathfrak{w}_k(x_{k+1})+\inner{s^{\mathfrak{w}}_{k+1}}{x-x_{k+1}},
\end{align*}
with $\nabla F(\hat x)^\top y_{k+1}=:s^{\mathfrak{w}}_{k+1} \in \partial \mathfrak{w}_k(x_{k+1})$. Hence, thanks to condition~\eqref{model_req}, we have that $\mathfrak{w}_{k+1}(x)\geq \max\col{w(x_{k+1})+\inner{s^w_{k+1}}{x-x_{k+1}},\,\mathfrak{w}_k(x_{k+1})+\inner{s^{\mathfrak{w}}_{k+1}}{x-x_{k+1}}}$, satisfying thus the minimal conditions a model must satisfy to ensure convergence of a standard proximal bundle  method applied to problem~\eqref{aux1} (see, for instance, \cite[\S 4]{Correa_Lemarechal_1993}, \cite[\S 4.1]{vanAckooij_Bello-Cruz_Oliveira_2016}, and discussions in \cite[\S 3]{Frangioni_Gorgone_2014}). Hence, as \eqref{armijo2} (the descent test to problem~\eqref{aux1}) does not hold for $k$ large enough, Theorem 4.4 of \cite{Correa_Lemarechal_1993}) asserts that 
\[
\lim_{k\to \infty} v_k =0,\quad \lim_{k \to\infty}x_{k+1}=\hat x, \quad \mbox{and $\hat x$ solves~\eqref{aux1}.}
\]
The latter condition implies $0 \in S(\hat x,\hat y,\hat z)$, with $\hat z= F(\hat x)$ and $\hat y \in \partial h(\hat z)$.

In both cases (finitely or infinitely many serious steps), the algorithm ensures the existence of an index set $N\subset \nats$ such that  $ \lim_{k \in N}  x_{k+1} = \lim_{k \in N} \hat x_{k}  = x ^\star\in X$, $ v_k \Nto 0$, $ y_{k+1} \Nto y^\star$, and $ z_{k+1} \Nto z^\star$. Hence, Algorithm~\ref{bundlemethod} stops after finitely many iterations provided $\tol>0$.
Let $k_{\tol}$ be such a last iteration. By setting  $x_{\tol}=x_{k_\tol+1}$ and $\epsilon_\tol = \epsilon_{k_\tol +1}$ as in Proposition~\ref{prop:vk}, we conclude that $\dist (0, S(x_\tol,y^\star,z^\star))\leq \epsilon_\tol$ and $x_\tol$ is an near-stationary point to~\eqref{pbm}.
\mybox
\end{proof}

\subsection{Composite Proximal Bundle Method under Additional Structure}\label{sec:special}

Without further details about the actual problem~\eqref{pbm}, local models $\modelh_k$ for $h$ relying on cutting planes as in \eqref{cp} may be the more natural choice. This subsection assumes more structure about $h$, and then richer models become promising. We consider a structured subclass of \eqref{pbm} for which $h$ is itself a composition as detailed in the following refinement of Assumption~\ref{assumptions}.

\begin{assumption}{\rm (composite proximal bundle method; additional structure).}\label{assumptions-3layers}
Assumption~\ref{assumptions}(a,b,c,d) hold and is supplemented by the additional property: for convex nondecreasing $h_0\fc{\Re^d}{ \Re}$ and convex $h_i\fc{\Re^m}{ \Re}$, $i=1,\ldots,d$, the function $h$ can be written as the composition
\[
h = h_0 \circ H, ~~~\text{ where } H = (h_1, \dots, h_d).
\]
\end{assumption}

The new part of Assumption~\ref{assumptions-3layers} is motivated by situations where $h_0$ is a ``simple'' convex function but $h_1, \dots, h_d$ are more complex and only accessible via a first-order oracle; see \cite{Ferris_Huber_Royset_2024} for examples. Then, it could be advantageous to concentrate the local model building on the latter functions and leave $h_0$ intact. We now exploit this structure to provide a novel model $\modelh_k$, not of the cutting-plane kind in \eqref{cp}, that is economical in terms of computational memory usage and satisfies conditions~\eqref{model_req}.

At iteration $k$ of Algorithm~\ref{bundlemethod}, for $i=1,\ldots,d$,  let $B_i^{k} \subset \col{0,\ldots,k}$  be index sets and consider the cutting-plane models for $h_i$ (with notation in~\eqref{zjj}, $s_{ij} \in \partial h_i(z_j)$, and $\mathfrak{s}_{ik} \in \partial \check{h}^{k-1}_i(z_{k}) $):
\begin{subequations}\label{models}
\begin{equation}
   \check{h}_i^{k}(z)=
      \max \left\{
\begin{array}{cc}
     \check{h}_i^{k-1}(z_{k}) + \inner{\mathfrak{s}_{ik}}{z - z_{k} }  \\
   h_i(z_j)+\inner{s_{ij}}{z-z_j},\;  j \in B_i^{k}
\end{array}
   \right\}\leq h_i(z)\quad \forall\, z \in \Re^m. \label{models-h}
\end{equation}
(At $k=0$, the first linearization above is absent.)
 Accordingly, we define
\begin{equation}\label{models-H}
\check{H}_k(z)=(\check{h}_1^k(z),\ldots, \check{h}_d^k(z))\quad \mbox{and}\quad \modelh_k(z)=h_0\circ \check{H}_k(z)
\end{equation}
\end{subequations}
as models for the mapping $H$ and composition $h=h_0\circ H$, respectively.
As a result, the master program~\eqref{spbm} can be recast as the following strongly convex problem:
\begin{align}
\nnmin_{x\in X,r\in\reals^d} ~f_0(x)+ h_0(r) + \frac{1}{2t_k}\norm{x-\hat x_k}^2 & \label{aux3}\\
 \mbox{subject to } ~~\check{h}_i^k\big(F(\hat x_k)+\nabla F(\hat x_k)(x-\hat x_k)\big) & \leq r_i,\; i=1,\ldots,d.\nonumber
\end{align}
Since $h_0$ is ``simple,'' this master problem is presumably solvable efficiently. This is the case when $h_0$ is smooth or piecewise linear and expressed by a moderate number of explicitly known pieces or in the form of a real-valued Rockafellar-Wets monitoring function (see, e.g., \cite{Royset.25}) for which there is a convenient minimization formula that can be incorporated into \eqref{aux3}.

The proof of the following proposition can be found in the Appendix. 

\begin{proposition}{\rm (local model under additional structure).}\label{lem:doublecomposition}
Under Assumption~\ref{assumptions-3layers}, consider the local models in~\eqref{models} and suppose that, for every iteration $k$, the index sets $B_i^{k+1}$ are updated as follows: 
\begin{itemize}
\item after a serious step, choose $B_i^{k+1}$ arbitrarily such that  $\col{k+1} \in B_i^{k+1}  \subset \col{0,\ldots,k+1}$; 

\item after a null or backtracking step, choose $B_i^{k+1}$ such that $\{k+1,  \hat j\} \subset B_i^{k+1}$, where  $\hat j$ is the iteration index of the last serious iterate.
\end{itemize}
Then the model $\modelh_k$ in~\eqref{models} satisfies~\eqref{model_req} and~\eqref{uniform_bound}.
\end{proposition}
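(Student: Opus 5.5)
The plan is to verify, one at a time, the four properties the statement bundles together. First, $\modelh_k=h_0\circ\check H_k$ is a lower model of $h$. Second, it dominates the required linearizations after a serious step. Third, after a null or backtracking step it dominates the three linearizations in \eqref{model_req}. Fourth, it satisfies \eqref{uniform_bound}. Throughout, the tool is the convex chain rule for a real-valued, convex, nondecreasing outer function composed with real-valued convex inner functions:
\[
\partial(h_0\circ G)(z)=\Big\{\textstyle\sum_{i=1}^d\lambda_i s_i~\Big|~\lambda\in\partial h_0(G(z)),\ s_i\in\partial g_i(z)\Big\},\qquad \lambda\ge 0 .
\]
We apply it with $G=H$ (for $h$) and with $G=\check H_k$ (for $\modelh_k$). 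The inequality $\lambda\ge0$ holds because $h_0$ is nondecreasing. This rule fixes how the subgradients $\hat s_{k+1}$, $s_{k+1}$ and $\mathfrak{s}_{i,k+1}$ are to be selected, so that the algorithm's ``compute $s\in\partial h(\cdot)$'' steps are read as chain-rule selections built from the very $s_{ij}$ used in the cuts \eqref{models-h}.

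\emph{Lower model.} Each $\check h_i^k$ is a finite maximum of affine functions, so it is convex and real-valued. By \eqref{models-h}, $\check h_i^k\le h_i$. Since $h_0$ is convex and nondecreasing, $h_0\circ\check H_k$ is convex, real-valued, and bounded above by $h_0\circ H=h$.

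\emph{Linearization bounds.} Let $\bar z$ be a point whose cut index lies in every $B_i^{k+1}$; this covers $\bar z=F(\hat x_{k+1})$ after a serious step, and both $\hat z_{\hat j}$ and $z_{k+1}$ after a null or backtracking step. Take $\bar s=\sum_i\lambda_i s_i\in\partial h(\bar z)$ with $\lambda\in\partial h_0(H(\bar z))$ and $s_i$ the cut subgradients at $\bar z$. Then
\[
\modelh_{k+1}(z)\ \ge\ h_0(H(\bar z))+\langle\lambda,\check H_{k+1}(z)-H(\bar z)\rangle\ \ge\ h(\bar z)+\langle\bar s,z-\bar z\rangle .
\]
The second inequality uses $\lambda\ge0$ together with $\check h_i^{k+1}(z)\ge h_i(\bar z)+\langle s_i,z-\bar z\rangle$. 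This delivers the serious-step requirement and the first two rows of \eqref{model_req}. For the third row, write $y_{k+1}=\sum_i\mu_i\mathfrak{s}_{i,k+1}$ with $\mu\in\partial h_0(\check H_k(z_{k+1}))$ and $\mathfrak{s}_{i,k+1}\in\partial\check h_i^k(z_{k+1})$; choose exactly these $\mathfrak{s}_{i,k+1}$ in the aggregate cut of \eqref{models-h}. Then $\check h_i^{k+1}(z)\ge\check h_i^k(z_{k+1})+\langle\mathfrak{s}_{i,k+1},z-z_{k+1}\rangle$, and the same monotonicity argument gives $\modelh_{k+1}(z)\ge\modelh_k(z_{k+1})+\langle y_{k+1},z-z_{k+1}\rangle$.

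\emph{Uniform bounded subgradients.} This is the step I expect to be the main obstacle. By induction on $k$, every aggregate subgradient $\mathfrak{s}_{ik}$ lies in the convex hull of previously generated cut subgradients $s_{ij}\in\partial h_i(z_j)$. Hence $\partial\check h_i^k(z)\subset\con\{s_{ij}\}$ for every $z$. The difficulty is that this hull is controlled by the trial points $z_j$ rather than by the set $C$, so I first need the $s_{ij}$ to be uniformly bounded. I would obtain this from boundedness of the generated $z_j$: use level-boundedness as in Theorem~\ref{thm:conv}, and note that the proximal terms keep $x_{k+1}$ close to the bounded $\hat x_k$. Local boundedness of $\partial h_i$ for real-valued convex $h_i$ then applies. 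Failing that, I would fall back on a Lipschitz hypothesis on the $h_i$.

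For the outer factor, fix a bounded $C$. On $C$, $\check H_k(z)\le H(z)$ is bounded above, and $\check H_k$ is bounded below by the fixed serious-step cut at $\hat z_{\hat j}$, whose subgradients are bounded as above. So $\check H_k(C)$ lies in a fixed bounded box. Local boundedness of $\partial h_0$ on that box bounds $\mu$. Combining the two bounds through the chain rule gives the constant $M_C$ in \eqref{uniform_bound}.
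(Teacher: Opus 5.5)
Your treatment of the lower-model property and of \eqref{model_req} is essentially the paper's. The paper takes $\tilde\lambda$ and $\mathfrak{s}_{i,k+1}$ from the KKT multipliers \eqref{aux2}, notes that $y_{k+1}\in\partial[h_0\circ\mathbb{H}_{-k}](z_{k+1})$ for the affine map $\mathbb{H}_{-k}$, and asserts the first two rows ``in a similar manner.'' You instead use any chain-rule decomposition of $y_{k+1}$ together with the subgradient inequality for $h_0$ and $\lambda\ge 0$. That is the same idea, with one useful extra precision: $\hat s_{k+1}$ and $s_{k+1}$ must be built from the very cut slopes $s_{ij}$ stored in \eqref{models-h}.

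The genuine difference is \eqref{uniform_bound}. The paper only invokes \eqref{chain-rule} and local boundedness of $\partial h_i$, $i=0,\dots,d$. Taken literally, this bounds $\partial\modelh_k$ on $C$ for each fixed $k$, but the constant depends on where the cut points $z_j$ and the values $\check H_k(z)$ lie, so it is not evidently independent of $k$. Your argument supplies the two missing uniformity ingredients:
\begin{enumerate}
\item every slope of $\check h_i^k$ lies in the convex hull of cut slopes taken at generated points, so bounded trial points suffice;
\item $\check H_k(C)$ stays in a fixed box, bounded above by $H$ and below by the persistent cut at the stability center.
\end{enumerate}
The second point really matters, because $\partial h_0$ of a convex nondecreasing $h_0$ need not be bounded on $\{r\le\bar r\}$. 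For example, $h_0(r)=\sup_{t\ge1}\{tr_1+r_2/t-t^2\}$ has the subgradient $(t^*,1/t^*)$ at $(0,-M)$ with $t^*=(M/2)^{1/3}$ for $M\ge 2$.

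The price of your route is a hypothesis outside Assumption~\ref{assumptions-3layers}: either level-boundedness for $x_0$ or Lipschitz $h_i$. Level-boundedness is in force in Theorem~\ref{thm:conv}, which is where \eqref{uniform_bound} is actually used.

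Your phrase that the proximal term keeps $x_{k+1}$ near $\hat x_k$ still needs to be made precise. Start from the master-problem inequality behind \eqref{vk}. Add convexity of $f_0$ and the serious-step cut at $\hat z_k$, which you have already established, so there is no circularity. This gives $\|x_{k+1}-\hat x_k\|_2\le 2t_{\max}\|\nabla f_0(\hat x_k)+\nabla F(\hat x_k)^\top\hat s_k\|_2$ with $\|\hat s_k\|_2\le L_h$.
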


We remark that after a serious step,  the aggregate linearization $\check{h}_i^k(z_{k+1}) + \inner{\mathfrak{s}_{ik+1}}{z - z_{k+1} } $ need not be present in the definition of $\check h^{k+1}_i(z)$.

\begin{example}{\rm (buffered failure probability constraints (cont.)).}\label{ex:buffered2}
We recall that Example~\ref{ex:buffered} satisfies Assumption~\ref{assumptions} under minor additional conditions as discussed after that assumption. An extension to the setting of Assumption~\ref{assumptions-3layers} is meaningful when the number of scenarios $s$ is extreme large. The function $h^\nu$ in this example can be written as $h^\nu = h_0 \circ H$, where 
\[
h_0(\gamma) = \rho^\nu\max\{0,\gamma\} ~~\text{ and } ~~ H(u) = \max_{\pi \in P_\alpha}  \sum_{i = 1}^s \pi_i \max_{j=1, \dots, q} u_{ij}.
\]
Then, $h_0$ is a simple, convex, and nondecreasing function and $H$ is convex but more complex and practically available only via an oracle.\mybox
\end{example}

\section{Local Model and Implementation: Composite DC Programming}\label{sec:dc}

In addition to Algorithm~\ref{bundlemethod} from the previous section, we provide two more alternatives for implementing Line 3 of Algorithm~\ref{algo}. Again we omit the superscript $\nu$ and develop the inner algorithms for the actual problem \eqref{pbm} under additional assumptions. Still, a main use of the algorithms is on Line 3 of Algorithm~\ref{algo}: obtain near-stationary points of the approximating problems \eqref{pbm-a}, which can be constructed to satisfy the more restrictive assumptions. 

In contrast to Algorithm \ref{bundlemethod}, which is a bundle method, we now dispose of the smoothness assumption for the mapping $F$. This has the advantage that a lLc mapping in the actual problem \eqref{pbm} does not need to be replaced by a smooth global approximation in \eqref{pbm-a}. Instead we rely on dc structure. 
Consider now problem~\eqref{pbm} under the following assumption:
\begin{assumption}{\rm (composite dc programming).}\label{assumptions-dc}
\mbox{ }
\begin{enumerate}[label=(\alph*), start=1]
\item $X\subset \Re^n$ is a nonempty, closed, and convex.
\item $f_0\fc{\Re^n}{\Re}$ is smooth and  convex.
\item $F(x)=(f_1(x),\ldots,f_m(x))$, with each component function $f_i\fc{\Re^n}{\Re}$ being a dc function with given dc decomposition $f_i(x)=f_{i}^1(x)-f_{i}^2(x)$. With this notation, $f_{i}^1\fc{\Re^n}{\Re}$ and $f_{i}^2\fc{\Re^n}{\Re}$ are convex functions, and 
we take $D_i(x)=\partial f_i^1(x)-\partial_e f_i^2(x)$ in~\eqref{eqn:S}, with $e\geq0$ arbitrary but fixed.
\item $h\fc{\Re^m}{\Re}$ is  convex and nondecreasing. 
\end{enumerate}
\end{assumption}

Under this assumption, problem~\eqref{pbm} is a composite dc programming problem. We highlight that dc optimization has received significant attention is the last few decades due to its broad range of applications and favorable structure for numerical optimization. We refer the interested reader to the review paper \cite{LeThi_Tao_2018} and tutorial \cite{Oliveira_ABC_2020} on dc programming; see also the books \cite[\S\, 6.H, 6.I]{Royset_Wets_2021}, \cite[Chapter 7]{Cui_Pang_2022}, and \cite[\S\, 4.6]{WWBook}.

In general, the Clarke subdifferential calculus cannot be directly used to compute subgradients of dc functions, since $\bar \partial f_i(x) \subset \partial f_i^1(x)-\partial f_i^2(x)$, and this inclusion is strict in general. Equality holds when at least one of the component functions is smooth. In practice,  oracles for the dc components $f_i^1$ and $f_i^2$ are often readily available, whereas oracle for $f_i$ itself is not. For this reason, and in order to retain greater flexibility in the definition of oracles, Assumption~\ref{assumptions-dc} adopts
$D_i(x)=\partial f_i^1(x)-\partial_e f_i^2(x)$, a milder requirement. In our developments, $e\geq0$ is arbitrary but fixed. While we may set $e=0$ in the asymptotic analysis of Theorem~\ref{theo:dc} below, Theorem~\ref{theo:dc-epsilon} requires $e>0$ in order to analyze the approximate stationarity condition used in Algorithm~\ref{algo}.

In the univariate composition case (i.e., $m=1$), Assumption \ref{assumptions-dc}(d) can be relaxed to merely convexity. This follows from the fact that any univariate real-valued convex function can be written as the sum of two real-valued convex functions, one nondecreasing and the other nonincreasing; see \cite[Lemma 6.1.1]{Cui_Pang_2022}.
%

Certain compositions involving an outer concave function and an inner convex function remain compatible with our framework.
\begin{example}{ \rm (composition with an outer concave function).}
Let $\psi:\Re^n \to [0,\infty)$ be a convex function and $\varphi:[0,\infty)\to \Re$ a univariate, concave, and nondecreasing function such that $\varphi'_+(0)=\lim_{t \to 0+}(\varphi(t)-\varphi(0))/t < \infty$. The optimization problem $\min_{x \in X} f_0(x)+\varphi(\psi(x))$ 
 appears in applications where sparse
solutions are sought \cite{Oliveira_Tcheou_2019}. To reformulate this problem within our framework, consider an arbitrary constant $\tau \geq \varphi'_+(0)$ and the following dc decomposition $\varphi(\psi(x))= \tau \psi(x)-[\tau\psi(x)-\varphi(\psi(x))]=f^1(x)-f^2(x)$ (convexity of $f^2(x)=\tau\psi(x)-\varphi(\psi(x))$  follows from \cite[Proposition 4.11]{WWBook}.) We are thus in the setting of Assumption~\ref{assumptions-dc} with $h(z)=z$. \mybox
\end{example}
To tackle problem~\eqref{pbm} under Assumption~\ref{assumptions-dc} we may consider  Algorithm~\ref{algo-dc}.

\begin{algorithm}[htb]
\caption{Composite dc Algorithm}
\label{algo-dc}
\begin{algorithmic}[1]
{\footnotesize
\State Data: $x_0 \in X$, $t>0$, and $\tol\geq 0$
\For{$k= 0, 1, \dots$}
 \State Compute $(f_{i}^2(x_k), s_{ik}^2 \in \partial f_{i}^2(x_k))$, $i=1,\ldots,m$, and define 
 \[\bar F_k(x)=\begin{pmatrix}
    f_{1}^1(x)-[f_{1}^2(x_k)+\inner{s^2_{1k}}{x-x_k}]\\
    \vdots \\
    f_{m}^1(x)-[f_{m}^2(x_k)+\inner{s^2_{mk}}{x-x_k}]
\end{pmatrix}
\]
\State Compute $(x_{k+1}, y_{k+1}, z_{k+1})$ satisfying \label{line:xkk}
\begin{subequations}\label{eqn:MP-dc}
\begin{align}
z_{k+1}&=\bar F_k(x_{k+1}) \label{eqn:MP1-dc}\\
y_{k+1} &\in \partial h(z_{k+1})\label{eqn:MP2-dc}\\
-\nabla f_0(x_{k+1}) - \sum_{i=1}^m y_{ik+1} [\partial f_i^1(x_{k+1})-s^2_{ik}] -\frac{x_{k+1}- x_k}{t}  & \in N_X(x_{k+1})\label{eqn:MP3-dc} 
\end{align}
\end{subequations}
\State Set $v_k\gets f_0(x_k)+h(F(x_k)) -[f_0(x_{k+1})+h(z_{k+1}) ]$ 
\State Define $e_{k}\gets \displaystyle\max_{i=1,\ldots,m} \col{f_i^2(x_{k+1}) - [f_i^2(x_k)+\inner{s_{ik}}{x_{k+1}-x_k}]}$
\If{ $v_k\leq \tol$ and $e_k\leq \tol$ }
\State Stop and return $x_{k+1}$
\EndIf
\EndFor
}
  \end{algorithmic}
\end{algorithm}

The main calculations of the algorithm takes place on Line 4, where \eqref{eqn:MP-dc} gives an optimality condition to the convex master problem
\begin{equation}\label{MP-dc}
\nnmin_{x \in X} f_0(x)+h(\bar F_k(x)) +\frac{1}{2t}\norm{x- x_k}^2.
\end{equation}
This problem can be solved by standard nonlinear optimization solvers provided all $f_i^1$ and $h$ are smooth functions. If $h$ is nonsmooth but of the max-type (e.g. $h(z)=\rho\max\col{0,z_1,\ldots,z_m}$, for some $\rho>0$), then the same holds after an epigraphical reformulation. Otherwise, if $h$, or at least one $f_i^1$, is a general nonsmooth convex function, then the master program~\eqref{MP-dc} can be solved by (convex) bundle methods such as those described in \cite[Chapter~11]{WWBook}.

The convexity of $f_i^2$ guarantees that the linearization error $e_k$ in Algorithm~\ref{algo-dc} is nonnegative. This error is then used by the algorithm as an additional stopping criterion. As a consequence, if the algorithm stops at iteration $k$ with $\tol \geq 0$, then the inclusion $s_{ik}^2 \in \partial f_i^2(x_k)$ together with the inequality $e_k \le \tol$ implies that $s_{ik}^2 \in \partial_{\tol} f_i^2(x_{k+1})$ for all $i = 1,\ldots,m$. When plugged into~\eqref{eqn:MP3-dc}, this inclusion yields an approximate stationary condition as we see in the next subsection.
\subsection{Convergence analysis}
Again, let $N^a \subset \nats$ be the index set of iterations generated by Algorithm \ref{algo-dc}, which may be finite if the algorithm stops after finitely many steps of simply be $\nats$ otherwise.
We start the convergence analysis of Algorithm~\ref{algo-dc} by observing that the constructed sequence of function values is nonincreasing. Indeed, as $h$ is nondecreasing and $F(\cdot)\leq \bar F_k(\cdot)$ due to the dc structure, we have that
\begin{align}
 f_0(x_{k+1})+h(F(x_{k+1})) &\leq  f_0(x_{k+1})+h(F(x_{k+1})) + \frac{1}{2t}\norm{x_{k+1}- x_k}^2
 \nonumber \\
 &\leq  f_0(x_{k+1})+h(\bar F_k(x_{k+1})) +  \frac{1}{2t}\norm{x_{k+1}-x_k}^2 \label{eq:dc-aux1}\\
 &=  f_0(x_{k+1})+h(z_{k+1}) + \frac{1}{2t}\norm{x_{k+1}- x_k}^2\nonumber\\ 
&  =\min_{x \in X} f_0(x)+h(\bar F_k(x)) + \frac{1}{2t}\norm{x- x_k}^2 \nonumber\\
& \leq f_0(x_{k})+h(\bar F_k(x_{k}))= f_0(x_{k})+h(F(x_{k})),\nonumber
\end{align}
showing that $\col{f_0(x_{k})+h(F(x_{k})), k\in N^a}$ is a nonincreasing sequence and
\[
 \frac{1}{2t}\norm{x_{k+1}- x_k}^2\leq v_k \quad \forall\,k \in N^a.
\]
Observe that if $v_k =0$, then $x_k$ solves~\eqref{MP-dc}. Hence, when $v_k =0$, we have that $x_{k+1}=x_k$, $e_k=0$, and \eqref{eqn:MP-dc} gives $z_{k+1}=\bar F_k(x_{k+1})=F(x_k)$ and $y_{k+1} \in \partial h(F(x_k))$ satisfying the optimality condition $0 \in S(x_{k+1},y_{k+1},z_{k+1})$; recall that $D_i(x)=\partial f_i^1(x)-\partial_e f_i^2(x)$ and $e\geq 0$ by Assumption~\ref{assumptions-dc}(c).

\begin{theorem}\label{theo:dc}{\rm (convergence of Algorithm \ref{algo-dc}).}
    Consider Algorithm~\ref{algo-dc} with $\tol=0$ applied to problem~\eqref{pbm} under Assumption~\ref{assumptions-dc}. 
    Let $N^a\subset \nats$ denotes the index set of iterates generated by the algorithm and suppose that~\eqref{pbm} is level-bounded for $x_0$. If $N^a$ is infinite, then $\lim_{k \in N^a}v_k = 0$, $\nliminf_{k \in N^a}e_k= 0$, and any cluster point $x^\star$ of the sequence $\{x_k,k\in N^a\}$ is stationary, i.e., $0 \in S(x^\star,y^\star,z^\star)$ for some $y^\star$ and $z^\star=F(x^\star)$. If $N^a$ is finite, then the above holds with $x^\star$ being the last element of the collection $\col{x_k, \in N^a}$.
\end{theorem}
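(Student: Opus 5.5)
The argument rests on the monotonicity chain \eqref{eq:dc-aux1}, which gives $\frac{1}{2t}\norm{x_{k+1}-x_k}^2\leq v_k$ and shows that $\{f_0(x_k)+h(F(x_k))\}$ is nonincreasing.

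\textbf{Finite case.} Suppose $N^a$ is finite. Then the algorithm stopped with $\tol=0$, so $v_k=0$. The discussion preceding the theorem already gives $x_{k+1}=x_k$, $e_k=0$, $z_{k+1}=F(x_k)$, $y_{k+1}\in\partial h(F(x_k))$, and $0\in S(x_{k+1},y_{k+1},z_{k+1})$. Here I use that $s^2_{ik}\in\partial f_i^2(x_{k+1})\subset\partial_e f_i^2(x_{k+1})$. This settles the finite case.

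\textbf{Infinite case: telescoping.} All iterates stay in the level set, which is bounded by level-boundedness for $x_0$. The objective values are therefore bounded below, since $f_0+h\circ F$ is continuous on a compact closure. Next I want to bound $v_k$ by consecutive objective differences. We have
\[
v_k = \phi(x_k)-[f_0(x_{k+1})+h(z_{k+1})] \leq \phi(x_k)-\phi(x_{k+1}),
\]
where $\phi=f_0+h\circ F$. The inequality holds because $h(z_{k+1})=h(\bar F_k(x_{k+1}))\geq h(F(x_{k+1}))$ by monotonicity of $h$. Telescoping gives $\sum_k v_k<\infty$, hence $v_k\to 0$ and $\norm{x_{k+1}-x_k}\to0$.

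\textbf{Infinite case: $e_k\to0$.} The $f_i^2$ are finite convex functions, so they are locally Lipschitz. On the bounded iterate set the subgradients $s^2_{ik}$ are therefore bounded. Combined with $\norm{x_{k+1}-x_k}\to 0$, this gives $e_k\to0$, and in particular $\nliminf e_k=0$.

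\textbf{Stationarity of cluster points.} Take $x_k\to x^\star$ along $N$; then also $x_{k+1}\to x^\star$, and $z_{k+1}=\bar F_k(x_{k+1})\to F(x^\star)$ since $e_k\to0$ and $F(x_{k+1})\to F(x^\star)$. Because $h$ is real-valued convex, $\partial h$ is locally bounded, so $y_{k+1}$ is bounded and has a cluster point $y^\star\in\partial h(F(x^\star))$ by outer semicontinuity.

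In \eqref{eqn:MP3-dc}, pick $g_{ik}\in\partial f_i^1(x_{k+1})$. These are bounded, so along a further subsequence $g_{ik}\to g_i\in\partial f_i^1(x^\star)$ by outer semicontinuity, and $s^2_{ik}\to s_i\in\partial f_i^2(x^\star)\subset \partial_e f_i^2(x^\star)$. Passing to the limit in the normal cone inclusion, using $(x_{k+1}-x_k)/t\to0$ and closedness of $\gph N_X$, yields
\[
-\nabla f_0(x^\star)-\sum_i y^\star_i(g_i-s_i)\in N_X(x^\star).
\]
Hence $0\in S(x^\star,y^\star,F(x^\star))$.

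\textbf{Main obstacle.} The delicate point is the sign of $y^\star_i$ when combining $\partial f_i^1-\partial f_i^2$. Since $h$ is nondecreasing, $y\geq0$, so the convexification in $\con D_i$ is harmless: $g_i-s_i\in D_i(x^\star)$ directly. Boundedness of every sequence is ensured by level-boundedness together with local boundedness of convex subdifferentials.
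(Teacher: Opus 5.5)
Your proof is correct. The first half agrees with the paper up to phrasing: monotonicity from \eqref{eq:dc-aux1}, $v_k\to 0$, $\norm{x_{k+1}-x_k}\to 0$, and the finite case. You telescope, while the paper uses convergence of the monotone objective values.

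The genuine difference is the stationarity step. The paper never passes to the limit in \eqref{eqn:MP3-dc}. Instead it extracts $s^2_{ik}\to\bar s^2_i$, forms the limiting convexified map $\bar F$, and shows by an optimal-value argument that $x^\star$ itself minimizes $\min_{x\in X} f_0(x)+h(\bar F(x))+\frac{1}{2t}\norm{x-x^\star}^2$. So $x^\star$ is a fixed point of the limiting proximal dc step, and stationarity is read off the convex optimality conditions of that single problem. This needs only convergence of the $s^2_{ik}$, pointwise convergence $\bar F_k\to\bar F$, and continuity. It never tracks $g_{ik}\in\partial f_i^1(x_{k+1})$ or $y_{k+1}$, but its multiplier $y^\star$ comes from the limit problem rather than from the algorithm.

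Your sequential-closedness argument follows the spirit of Proposition~\ref{prop:vk}: it uses local boundedness and closed graphs of $\partial h$, $\partial f_i^1$, $\partial f_i^2$, and $N_X$. It exhibits $y^\star$ as a cluster point of the computed $y_{k+1}$, with $z_{k+1}\to F(x^\star)$. That is the form in which Proposition~\ref{prop:dc} later invokes this result in \eqref{eq:dc-aux2}. Your Lipschitz bound also gives $e_k\to 0$ along the whole sequence, slightly more than $\nliminf_k e_k=0$.

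The sign of $y^\star$ is not actually an obstacle. For any real $y_i^\star$ one has $y_i^\star(g_i-s_i)\in y_i^\star D_i(x^\star)$, and $D_i(x^\star)$ is already convex. Monotonicity of $h$ (equivalently $y\ge 0$) matters only in \eqref{eq:dc-aux1} and in the convex chain rule behind \eqref{eqn:MP3-dc}, not for this membership.
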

\begin{proof}
If the algorithm stops at iteration $k$ with $\tol=0$, then $x_{k+1}=x_k$, $v_k=e_k=0$, and $x_{k+1}$ is stationary as noted prior to the theorem. Let us now suppose that the algorithm loops forever, i.e., $N^a=\nats$.  As $\col{f_0(x_{k})+h(F(x_{k})), k\in \nats}$ is a nonincreasing sequence, we get that 
\[\col{x_k, k\in \nats} \subset \col{x \in X:\, f_0(x)+h(F(x))\leq f_0(x_0)+h(F(x_0))}
\]
is  bounded and thus, there exists an index set $\mathcal{K}$ such that $\lim_{k  \in \mathcal{K}} x_{k} =x^\star \in X$. Therefore, continuity of the involved functions and monotonicity of $\col{f_0(x_{k})+h(F(x_{k})), k\in \nats}$ give 
\begin{align*}
f_0 ( x^\star) + h(F(x^\star))&=\lim_{k \in \mathcal{K}}\col{f_0(x_{k})+h(F(x_{k}))}=\lim_{k \in \mathcal{K}}\col{f_0(x_{k+1})+h(F(x_{k+1}))}\\
&=\lim_{k \to \infty}\col{f_0(x_{k+1})+h(F(x_{k+1}))}.
\end{align*}
 This also shows, together with~\eqref{eq:dc-aux1}, that $v_k \to 0$ and $\lim_{k  \in \mathcal{K}} x_{k+1}=\lim_{k  \in \mathcal{K}} x_{k} = x^\star$.
 As the subdifferentials $\partial f^2_i(x_k)$ are outer semicontinous and locally bounded, we get that $\lim_{k \in \mathcal{K}}e_k =0$. Furthermore, there exists an index set $N\subset \mathcal{K}$ such that
 \[
 \lim_{k \in N} s^2_{ik} = \bar s^2_i \in \partial f_i^2(x^\star), \quad i=1,\ldots,m,
 \]
 and thus,
\[
\lim_{k \in N} \bar F_k(x) = \bar F(x):=\begin{pmatrix}
    f_{1}^1(x)-[f_{1}^2(\bar x)+\inner{\bar s^2_{1}}{x-x^\star}]\\
    \vdots \\
    f_{m}^1(x)-[f_{m}^2(\bar x)+\inner{\bar s^2_{m}}{x-x^\star}]
\end{pmatrix}
\quad \mbox{for all $x$.}
\] 
 Furthermore, regardless of $x\in X$,
\begin{align*}
f_0 (x^\star) + h(F(x^\star))  & = \lim_{k \in N }
\;f_0 (x_{k+1}) + h(F(x_{k+1}))+\frac{1}{2t}\norm{x_{k+1}- x_k}^2 \\
& 
     \leq \limsup_{k \in N }
\;f_0 (x_{k+1}) + h(\bar F_k(x_{k+1}))+\frac{1}{2t}\norm{x_{k+1}- x_k}^2 \\
 & \leq \limsup_{k \in N }
\;f_0 (x) + h(\bar F_k(x))+\frac{1}{2t}\norm{x- x_k}^2  \\
&= f_0(x) +h(\bar F(x))+\frac{1}{2t}\norm{x-x^\star}^2,
\end{align*}
where the first inequality follows from monotonicity of $h$ and definition of $\bar F_k$, and the second is due to optimality of $x_{k+1}$ in the master program.
All in all, we conclude that
 \begin{align*}
 f_0 (x^\star) + h(F(x^\star ))   & \leq  \min_{x \in X} \; f_0(x) +h(\bar F(x))+\frac{1}{2t}\norm{x-x^\star}^2
  \leq f_0(x^\star) +h(\bar F(x^\star))
 = f_0 (x^\star) + h(F(x^\star)),
 \end{align*}
 i.e., $x^\star$ solves $\min_{x \in X}  f_0(x) +h(\bar F(x))+\frac{1}{2t}\norm{x-x^\star}^2$. 
 Its optimality conditions reads as
 \begin{align*}
z^\star =\bar F(x^\star), \;
 y^\star \in \partial h(z^\star), \;
0  \in \nabla f_0(x^\star) + \sum_{i=1}^m y^\star_{i} [\partial f_i^1(x^\star)-\bar s^2_{i}]  +N_X(x^\star). 
\end{align*}
 Recall that $\bar F(x^\star)= F(x^\star)$ and Assumption~\ref{assumptions-dc}(c) ensures that $\partial f_i^1(x^\star)-\bar s^2_{i} \subset \partial f_i^1(x^\star)-\partial f^2_i(x^\star) \subset  D_i(x^\star)$. Consequently, $0 \in S(x^\star,y^\star,z^\star)$. \mybox
 \end{proof}
We stress that $e \ge 0$ in Assumption~\ref{assumptions-dc}(c) can be taken as $e = 0$ in the  asymptotic analysis of Algorithm~\ref{algo-dc} given above. However, in order to analyze the practical case in which Algorithm~\ref{algo-dc} is called by Algorithm~\ref{algo} at the outer iteration $\nu \in \nats$ with $\tol = \tol^\nu > 0$, we need to consider $e > 0$ so that the approximate stationary condition~\eqref{eqn:tol} can be shown to be satisfied with a vanishing error $\epsilon^\nu$. Consequently, the requirement $e > 0$ becomes crucial in the subsequent analysis.
\begin{proposition}\label{prop:dc}{\rm (approximate stationary).}
Under the assumption of Theorem~\ref{theo:dc}, let $N\subset \nats$ such that
$
 x_{k} \Nto  x^\star $ and
\[
\epsilon_{k+1}^2 =  \norm{F(x_{k+1})-z_{k+1}}^2+\frac{1}{t^2}\norm{x_{k+1}-x_k}^2+\dist^2\Big(0,\sum_{i=1}^my_{ik+1}[s^2_{ik} -\partial_{e} f^2_i(x_{k+1})\Big).
\]
Then
$\epsilon_{k+1} \geq  \dist(0,S(x_{k+1},y_{k+1},z_{k+1}))$ for all $k$. If $e>0$, then  $\lim_{k \in N} \epsilon_{k+1} =0$.
\end{proposition}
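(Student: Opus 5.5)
The plan has two parts. First I bound $\dist(0,S(x_{k+1},y_{k+1},z_{k+1}))$ by splitting it into the three blocks of \eqref{eqn:S}. Then I drive the third term of $\epsilon_{k+1}$ to zero by converting the fixed subgradients $s^2_{ik}$ into $e$-subgradients at $x_{k+1}$.

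\textbf{The bound.} Since $S$ is a product, the squared distance is the sum of the three squared block distances.
\begin{itemize}
\item The first block is $\|F(x_{k+1})-z_{k+1}\|^2$.
\item The second block vanishes by \eqref{eqn:MP2-dc}, because $y_{k+1}\in\partial h(z_{k+1})$.
\item For the third block, \eqref{eqn:MP3-dc} supplies $g_i\in\partial f_i^1(x_{k+1})$ and $n\in N_X(x_{k+1})$ with
\[
0=\nabla f_0(x_{k+1})+\sum_i y_{ik+1}(g_i-s^2_{ik})+\tfrac{1}{t}(x_{k+1}-x_k)+n .
\]
For any choice $q_i\in\partial_e f_i^2(x_{k+1})$, the vector $w=\nabla f_0(x_{k+1})+\sum_i y_{ik+1}(g_i-q_i)+n$ belongs to the third component of $S$, because $g_i-q_i\in D_i(x_{k+1})\subset\con D_i(x_{k+1})$.
\item Moreover $w=\sum_i y_{ik+1}(s^2_{ik}-q_i)-\tfrac1t(x_{k+1}-x_k)$. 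The triangle inequality, followed by an infimum over $q_i$, gives the bound by $\tfrac1t\|x_{k+1}-x_k\|+\dist\big(0,\sum_i y_{ik+1}[s^2_{ik}-\partial_e f_i^2(x_{k+1})]\big)$.
\end{itemize}
A triangle inequality appears here, so to match the stated $\epsilon_{k+1}$ exactly I will use $(a+b)^2\le 2a^2+2b^2$ and note the factor. Alternatively, I will argue directly if the paper intends the sum of squares to dominate up to such a constant. This is the one spot where I must be careful, and I expect it to be the main technical wrinkle.

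\textbf{The limit.} Assume $e>0$ and $x_k\Nto x^\star$.
\begin{itemize}
\item By Theorem~\ref{theo:dc} and \eqref{eq:dc-aux1}, $v_k\to0$ and $\|x_{k+1}-x_k\|^2\le 2tv_k$, so $x_{k+1}\Nto x^\star$ and the second term vanishes.
\item For the first term, $F(x_{k+1})-z_{k+1}=F(x_{k+1})-\bar F_k(x_{k+1})$. Its $i$-th component equals minus the linearization error $f_i^2(x_{k+1})-f_i^2(x_k)-\inner{s^2_{ik}}{x_{k+1}-x_k}$.
\item That error tends to zero because $s^2_{ik}$ is bounded (subdifferentials of finite convex functions are locally bounded) and $f_i^2$ is continuous. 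In particular $e_k\to0$ along $N$.
\item For the third term, the key step is that once $e_k\le e$, the inclusion $s^2_{ik}\in\partial f_i^2(x_k)$ implies $s^2_{ik}\in\partial_{e_k}f_i^2(x_{k+1})\subset\partial_e f_i^2(x_{k+1})$. This is the standard transport of subgradients: for every $x$, $f_i^2(x)\ge f_i^2(x_{k+1})+\inner{s^2_{ik}}{x-x_{k+1}}-e_k$.
\item Taking $q_i=s^2_{ik}$ then makes the distance exactly zero for all large $k\in N$.
\end{itemize}
Hence $\epsilon_{k+1}\to0$ along $N$. This is where $e>0$ is indispensable: with $e=0$ one cannot absorb the positive $e_k$.

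Boundedness of $y_{k+1}$ is not even needed, since the third term is exactly zero eventually.
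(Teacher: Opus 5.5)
Your argument follows the paper's proof. It uses the same three-block splitting of $S$, the same choice of $g_i$ (the paper's $s^1_{ik+1}$) and of the normal vector from \eqref{eqn:MP3-dc}, and, for the limit, the same transport $s^2_{ik}\in\partial_{e_k}f_i^2(x_{k+1})\subset\partial_e f_i^2(x_{k+1})$, which makes the third term of $\epsilon_{k+1}$ zero for all large $k\in N$. Your limit part is complete. It is slightly cleaner than the paper's, because it works along the given $N$ and never needs a subsequence on which $y_{k+1}$ converges.

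The wrinkle you flagged is genuine, and you should not try to ``argue directly'' around it. Write $A=\sum_i y_{ik+1}[s^2_{ik}-\partial_e f_i^2(x_{k+1})]$. The paper's proof passes from $\dist^2\big(0,A-\tfrac1t(x_{k+1}-x_k)\big)$ to $\dist^2(0,A)+\tfrac1{t^2}\|x_{k+1}-x_k\|^2$, which drops a cross term. As a result, the claim $\epsilon_{k+1}\ge\dist(0,S(x_{k+1},y_{k+1},z_{k+1}))$ for all $k$ is false as stated.

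A counterexample: take $n=m=1$, $X=\Re$, $h(z)=z$, $f_0(x)=\tfrac a2x^2$, $f_1^1=0$, $f_1^2(x)=\tfrac c2x^2$ with $a>c>0$, and $e=0$. This satisfies the hypotheses of Theorem~\ref{theo:dc}.
\begin{itemize}
\item The iterates are $y_{k+1}=1$ and $x_{k+1}=\tfrac{c+1/t}{a+1/t}x_k$. Set $\delta=x_k-x_{k+1}$.
\item The first blocks agree, since $F(x_{k+1})-z_{k+1}=-\tfrac c2\delta^2$ on both sides.
\item The third block of $S$ is the singleton $\{(a-c)x_{k+1}\}=\{(c+1/t)\delta\}$.
\item The remaining two terms of $\epsilon_{k+1}^2$ sum to only $(c^2+1/t^2)\delta^2$.
\end{itemize}
Hence $\dist^2(0,S(x_{k+1},y_{k+1},z_{k+1}))-\epsilon_{k+1}^2=2c\delta^2/t>0$ whenever $x_k\neq0$. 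With $e>0$ the same failure occurs at early iterations where $\tfrac c2\delta^2>e$.

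So the correct general statement is your bound $\dist(0,S(x_{k+1},y_{k+1},z_{k+1}))\le\sqrt2\,\epsilon_{k+1}$. You can add that the exact bound does hold whenever $e_k\le e$. In that case $0\in A$ (take $q_i=s^2_{ik}$), so the third block is at most $\tfrac1t\|x_{k+1}-x_k\|$ and the third term of $\epsilon_{k+1}$ vanishes. This covers all large $k\in N$ when $e>0$, and also the stopping iterate in Theorem~\ref{theo:dc-epsilon}, where $e_k\le\tol\le e$. So nothing downstream is affected.
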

\begin{proof}
The proof of Theorem~\ref{theo:dc} shows that there exists an index set $N$ such that
\begin{subequations}\label{eq:dc-aux2}
\begin{align}
\lim_{k\in N} x_{k+1} = \lim_{k\in N} x_{k} & = x^\star \\
\lim_{k\in N} z_{k+1} =  \lim_{k \in N} \bar F_k(x_{k+1}) & = F(x^\star)=z^\star\\
\lim_{k\in N} y_{k+1} &= y^\star \in \partial h(F(x^\star)). 
\end{align}
\end{subequations}
Observe that
\begin{align*}
\dist^2\Big(0,S(x_{k+1},y_{k+1},z_{k+1})\Big) \leq \norm{F(x_{k+1})-z_{k+1}}^2
+\dist^2\Big(y_{k+1},\partial h(z_{k+1})\Big)  \\
\quad +\dist^2\Big(0,\nabla f_0(x_{k+1})+\sum_{i=1}^my_{ik+1}[\partial f^1_i(x_{k+1})-\partial_{e} f^2_i(x_{k+1})]+N_X(x_{k+1})\Big).
\end{align*}
It follows from~\eqref{eqn:MP2-dc} that $\dist^2(y_{k+1},\partial h(z_{k+1})) =0$ by construction.  
 Let us now work out the last term in the above inequality. Recall that \eqref{eqn:MP3-dc} gives
$
v_{k+1}=-\nabla f_0(x_{k+1}) - \sum_{i=1}^m y_{ik+1} (s^1_{ik+1}-s^2_{ik})-\frac{1}{t}(x_{k+1}-x_k)  \in N_X(x_{k+1}).
$
Therefore,
  \begin{align*}
&\dist^2\Big(0,\nabla f_0(x_{k+1})+\sum_{i=1}^my_{ik+1}[\partial f^1_i(x_{k+1})-\partial_{e} f^2_i(x_{k+1})]+N_X(x_{k+1})\Big)\\
&\leq\dist^2\Big(0,\nabla f_0(x_{k+1})+\sum_{i=1}^my_{ik+1}[\partial f^1_i(x_{k+1})-\partial_{e} f^2_i(x_{k+1})]+v_{k+1}\Big)\\
&=\dist^2\Big(0,\sum_{i=1}^my_{ik+1}\big[\partial f^1_i(x_{k+1})-s^1_{ik+1}-[\partial_{e}f^2_i(x_{k+1})-s^2_{ik}]\big]-\frac{1}{t}(x_{k+1}-x_k)\Big)\\
&\leq \dist^2\Big(0,\sum_{i=1}^my_{ik+1}[s^2_{ik} -\partial_{e} f^2_i(x_{k+1})\Big)+\frac{1}{t^2}\norm{x_{k+1}-x_k}^2.
\end{align*} 
(The first equality above follows from definition of $v_{k+1}$, and the last inequality because $0 \in \partial f^1_i(x_{k+1})-s^1_{ik+1}$.)
We have thus shown that
$\epsilon_{k+1} \geq  \dist(0,S(x_{k+1},y_{k+1},z_{k+1}))$. 

Let us now consider the case in which $e>0$ to show that $\lim_{k \in N} \epsilon_{k+1} =0$.
To this end,  first note that $s^2_{ik} \in \partial f_i^2(x_k)$ implies that $s^2_{ik} \in \partial_{e_{ik+1}} f_i^2(x_{k+1})$, with  $e_{ik+1}=f_i^2(x_{k+1}) - [f_i^2(x_k)+\inner{s_{ik}}{x_{k+1}-x_k}]\geq 0$. Furthermore, it follows from~\eqref{eq:dc-aux2} that $\lim_{k \in N} e_{ik+1} =0$, for all $i=1,\ldots,m$. As a result, given $e>0$, there exists $\tilde k\in \nats$  such that $0\leq e_{ik+1} \leq e$ for all $k\geq \tilde k$. Hence, the inclusion $s^2_{ik} \in \partial_{e} f_i^2(x_{k+1})$ holds for all $k \geq \tilde k$ and we get  that 
$\lim_{k \in N} \epsilon^2_{k+1} =\lim_{k \in N} \norm{F(x_{k+1})-z_{k+1}}^2+\frac{1}{t^2}\norm{x_{k+1}-x_k}^2=0$ thanks to~\eqref{eq:dc-aux2}.
\mybox
\end{proof}
Since the error $\epsilon_{k+1}$ is guaranteed to vanish when $e > 0$, the next result shows that Algorithm~\ref{algo-dc} computes points that are near-stationary provided that $\tol > 0$ and $e \ge \tol$. This latter inequality ensures that once the linearization error $e_k$ computed by the algorithm becomes smaller than $\tol$ (which occurs after finitely many steps thanks to Theorem~\ref{theo:dc} and Proposition~\ref{prop:dc}), the subgradient $s^2_{ik} \in \partial f_i^2(x_k)$ also belongs to the approximate subdifferential of $f_i^2$ at $x_{k+1}$, that is,
$
s^2_{ik} \in \partial_\tol f_i^2(x_{k+1}).
$
As a consequence, the condition $e \ge \tol$ guarantees that the last term in the definition of $\epsilon_{k+1}^2$ becomes zero, thereby significantly simplifying the analysis.
 \begin{theorem}\label{theo:dc-epsilon}{\rm (convergence of Algorithm \ref{algo-dc}, positive tolerance).}
Consider Algorithm~\ref{algo-dc} with $\tol>0$ applied to problem~\eqref{pbm} under Assumption~\ref{assumptions-dc}.
 Suppose that~\eqref{pbm} is level-bounded for $x_0$ and $e\geq \tol$ in Assumption~\ref{assumptions-dc}(c).
 Then the algorithm stops after finitely many iterations, say $k_\tol$, and $ x_\tol=x_{k_\tol+1}$ satisfies
\[
\dist\big(0, S(x_{\tol},y_{\tol} ,z_{\tol})\big) \leq \epsilon_\tol,
\]
for some $y_\tol \in \Re^m$, $z_\tol \in \Re^n$, and $\epsilon_\tol=\epsilon_{k_\tol+1}\geq 0$ given in Proposition~\ref{prop:dc}. Furthermore, it follows that $\epsilon_\tol$ vanishes as $\tol \to 0$.
\end{theorem}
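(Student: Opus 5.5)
The proof has three parts: finite termination, the approximate stationarity bound at the stopping iterate, and the vanishing of $\epsilon_\tol$ as $\tol\to0$.

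\emph{Finite termination.} Run Algorithm~\ref{algo-dc} with the given $\tol>0$ and suppose, for contradiction, that it never stops, so $N^a=\nats$. Nothing in the proof of Theorem~\ref{theo:dc} uses the value of $\tol$ beyond the stopping test, so its conclusions apply to this infinite run. Hence $v_k\to0$. Level-boundedness also gives a cluster point $x^\star$ and an index set $\mathcal{K}$ with $x_k\to x^\star$ and $x_{k+1}\to x^\star$ along $\mathcal{K}$. The linearization errors $e_k$ then vanish along $\mathcal{K}$, because $\partial f_i^2$ is locally bounded and each $f_i^2$ is continuous. So for some $k\in\mathcal{K}$ both $v_k\le\tol$ and $e_k\le\tol$ hold, and the algorithm stops there, a contradiction. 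Let $k_\tol$ be the stopping iteration.

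\emph{Approximate stationarity.} At $k=k_\tol$, set $y_\tol=y_{k+1}$, $z_\tol=z_{k+1}$ and $\epsilon_\tol=\epsilon_{k+1}$. The first part of Proposition~\ref{prop:dc} bounds $\dist(0,S(x_{k+1},y_{k+1},z_{k+1}))$ by $\epsilon_{k+1}$ for every $k$, and it does not use $e>0$. Since $e_k\le\tol\le e$, the convexity argument in the proof of Proposition~\ref{prop:dc} gives $s^2_{ik}\in\partial_{e_k}f_i^2(x_{k+1})\subset\partial_e f_i^2(x_{k+1})$. Hence the distance term in $\epsilon^2_{k+1}$ is zero, and
\[
\epsilon_\tol^2=\norm{F(x_{k+1})-\bar F_k(x_{k+1})}^2+\tfrac1{t^2}\norm{x_{k+1}-x_k}^2 .
\]

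\emph{Vanishing of $\epsilon_\tol$.} Both terms are controlled by the stopping quantities. The second term is at most $2v_k/t^2\le2\tol/t^2$ by the descent inequality. For the first, each component satisfies $0\le \bar F_{k,i}(x_{k+1})-F_i(x_{k+1})=e_{ik+1}\le e_k\le\tol$. So
\[
\epsilon_\tol^2\le m\,\tol^2+2\tol/t^2 ,
\]
which tends to $0$ as $\tol\to0$, with the bound uniform in the stopping iterate.

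The main obstacle is the termination argument: Theorem~\ref{theo:dc} is stated for $\tol=0$, so one must check that its proof carries over unchanged to the infinite run with $\tol>0$. One must also make sure $e_k$ and $v_k$ are small at a common index. This holds because $v_k\to0$ along the whole sequence, while $e_k\to0$ along $\mathcal{K}$.
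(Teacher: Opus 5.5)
Your proof is correct. For finite termination and for the bound $\dist(0,S(x_\tol,y_\tol,z_\tol))\le\epsilon_\tol$ it follows the paper: you rerun the asymptotic argument of Theorem~\ref{theo:dc} on an infinite run with $\tol>0$, which the paper leaves implicit. You then use the first part of Proposition~\ref{prop:dc}, together with the inclusion $s^2_{ik}\in\partial_{e_k}f_i^2(x_{k+1})\subset\partial_e f_i^2(x_{k+1})$ (from $e_k\le\tol\le e$), to make the distance term vanish.

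You genuinely differ from the paper only in the last claim. The paper concludes $\epsilon_\tol\to0$ by recalling \eqref{eq:dc-aux2}, which describes subsequential limits of the $\tol=0$ analysis. That tacitly requires relating the stopping index $k_\tol$ to the convergent subsequence $N$ used there. You instead read an explicit estimate directly off the stopping test. Each component of $F(x_{k+1})-z_{k+1}$ equals $-e_{ik+1}$ with $0\le e_{ik+1}\le e_k\le\tol$, and the descent inequality controls the step $x_{k+1}-x_k$ through $v_k\le\tol$. This yields a nonasymptotic bound that is uniform in the stopping index and also covers runs where the $\tol=0$ version terminates finitely. That is exactly what Algorithm~\ref{algo} needs, so your version is tighter than the paper's one-line appeal.

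There is one arithmetic slip. From $\frac{1}{2t}\norm{x_{k+1}-x_k}^2\le v_k$ you get $\frac{1}{t^2}\norm{x_{k+1}-x_k}^2\le 2v_k/t\le 2\tol/t$, not $2\tol/t^2$. The bound should therefore read $\epsilon_\tol^2\le m\,\tol^2+2\tol/t$. The conclusion is unaffected because $t$ is fixed.
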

 \begin{proof}
It follows from Theorem~\ref{theo:dc}, the positivity of $\tol$, and Proposition~\ref{prop:dc} that the algorithm stops after finitely many steps and $\dist(0,S(x_{k+1},y_{k+1},z_{k+1}))\leq \epsilon_{k+1}$ for all $k$. 
Let $k_{\tol}$ be this last iteration $k$ and denote
$ x_\tol=x_{k_\tol+1}$, $ y_\tol=y_{k_\tol+1}$, $ z_\tol=z_{k_\tol+1}$,  and $\epsilon_{\tol}= \epsilon_{k_{\tol}+1} $.
Let also $e_{ik+1}= f_i^2(x_{\tol}) - [f_i^2(x_{k_\tol})+\inner{s_{ik_\tol}}{x_{\tol}-x_{k_\tol}]}$ and $e_{k_\tol}=\max_{i=1,\ldots,m} e_{ik+1}$.  Therefore, the inclusion $s_{ik_\tol}^2 \in \partial f_i^2(x_{k_\tol})$ yields $s_{ik_\tol}^2 \in  \partial_{e_{k_\tol}} f_i^2(x_{\tol})\leq \partial_{\tol} f_i^2(x_{\tol})\leq \partial_{e} f_i^2(x_{\tol})$ because the stopping test ensures  $e_{k_\tol}\leq \tol$ and $e\geq \tol>0$ by assumption.
Hence, 
\begin{align*}
\epsilon_{k+1}^2 & =  \norm{F(x_{k+1})-z_{k+1}}^2+\frac{1}{t^2}\norm{x_{k+1}-x_k}^2+\dist^2\Big(0,\sum_{i=1}^my_{ik+1}[s^2_{ik} -\partial_{e} f^2_i(x_{k+1})\Big)\\
&= \norm{F(x_{k+1})-z_{k+1}}^2+\frac{1}{t^2}\norm{x_{k+1}-x_k}^2 \quad \mbox{at $k=k_\tol$.}
\end{align*}
As the right-hand side above vanishes when $\tol\to 0$ (recall~\eqref{eq:dc-aux2}), we conclude that $\epsilon_\tol$ vanishes as $\tol \to 0$. 
   \mybox
\end{proof}

In particular, when  Algorithm~\ref{algo} calls Algorithm~\ref{algo-dc} (with $\tol^\nu>0$) at the outer-iteration $\nu$, the latter returns a point $x^\nu$ ($=x_{k_{\tol^\nu}+1})$ satisfying~\eqref{eqn:tol} with  $(\epsilon^\nu)^2={\norm{F(x^\nu)-z^\nu}^2+\frac{1}{t^2}\norm{x^\nu-x_{k_\tol^\nu}}^2}$ and some $y^\nu$, $z^\nu$ provided $D^\nu_i(x)=\partial f^2_i(x)-\partial_{\tol^\nu} f^2_i(x)$. Note that $\epsilon^\nu \to 0$ and $\nOutLim D^\nu_i(x^\nu) \subset D_i(x) = \partial f^2_i(x)-\partial f^2_i(x)=\con D_i(x)$ whenever $\tol^\nu \to 0$ and $x^\nu\to x$.

\subsection{The Setting with Distance-to-Set Penalties}
The application with distance-to-set penalties in Example~\ref{example:dist2set} satisfies Assumption~\ref{assumptions-dc}. For $q\leq  m$, let $f_i(x)=\dist^2(x,K_i)$ be the
 squared (Euclidean) distance to the closed  set $K_i\neq \emptyset$, $i=1,\ldots,q$. 
 Assume that 
 $h\fc{\Re^m}{\Re}$ is of the form  $h(z) = \sum_{i=1}^q\frac{\rho_i}{2} z_i +  \sum_{i=q+1}^m \rho_i\max\col{0,z_i} $, with  $\rho_i>0$, $i=1,\ldots,m$, given penalties.
Observe that $h$ is nondecreasing and
\begin{equation}\label{eq:dist2set-dc}
\dist^2(x,K_i)=\min_{u \in K_i} \norm{u-x}^2 = \norm{x}^2-\displaystyle\max_{u \in K_i} \col{2\inner{x}{u} - \norm{u}^2}=f_i^1(x)-f_i^2(x).
\end{equation}
Suppose that $q=m$. In this case, problem~\eqref{pbm} reads as
$
\nnmin_{x \in X} \; f_0 (x) + \sum_{i=1}^m \frac{\rho_i}{2} \dist^2(x,K_i),
$
a setting studied in \cite{vanAckooij_Oliveira_2025} under a more general function $f_0$. 
In this case, Algorithm~\ref{algo-dc} can be simplified, leading to the following approach for computing a stationary point of the problem~\eqref{pbm} under these additional assumptions.
\begin{algorithm}[htb]
\caption{Proximal Distance Algorithm - a variant of Algorithm~\ref{algo-dc}}
\label{algo-dist2set}
\begin{algorithmic}[1]
{\footnotesize
\State Data: $x_0 \in X$, $t>0$,  and $\tol\geq 0$. Set $\mu\gets 1/t+\sum_{i=1}^m\rho_i$
\For{$k= 0, 1, \dots$}
 \State Compute $ p^i \in \proj_{K_i}(x_k)$, $i=1,\ldots,m$, and set $\hat p_k\gets \frac{1}{\mu}\Big(\sum_{i=1}^m\rho_i p^i_{k}+x_k/t\Big)$ 
\State Let $x_{k+1}$ be a solution of the problem 
\[\nnmin_{x \in X} \,f_0(x) +\frac{\mu}{2} \norm{x-\hat p_k}^2\]

\State Set $v_k\gets f_0(x_k)+ \sum_{i=1}^m \frac{\rho_i}{2}\dist^2(x_k,K_i)  -[f_0(x_{k+1})+\sum_{i=1}^m \frac{\rho_i}{2} \norm{x_{k+1}-p^i_{k}}^2]$ and $e_k$ as in Algorithm~\ref{algo-dc}
\If{ $v_k\leq \tol$ and $e_k\leq \tol$ }
\State Stop and return $x_{k+1}$
\EndIf
\EndFor
}
  \end{algorithmic}
\end{algorithm}

The solution set of the master program on Line 4 in Algorithm~\ref{algo-dist2set} coincides with the solution set of $\nnmin_{x \in X} \,f_0(x) +\sum_{i=1}^m \frac{\rho_i}{2}\norm{x-p_k^i}^2+\frac{1}{2t}\norm{x-x_k}^2$,
which is nothing other than the master problem~\eqref{MP-dc} under the assumption that $f_i(x)=\dist^2(x,K_i)$. 


\begin{acknowledgements}
The second author acknowledges financial support from the National Science Foundation under CMMI-2432337 and the Office of Naval Research under N00014-24-1-2492.

\end{acknowledgements}


\bibliographystyle{apalike}
\bibliography{references.bib}

\appendix

\section{Additional Proofs}

\begin{proof} {\bf of Proposition \ref{prop:optimality}.} Let $l_0(x,y) = f_0(x) + \langle F(x), y\rangle$ and $l_F(x,y) = \langle F(x), y\rangle$. Guided by \cite[Exercise 10.52]{Rockafellar_Wets_1998}, we find that there exists $y^\star\in\reals^m$ such that 
\[
0 \in \partial_x l_0(x^\star,y^\star) + N_X(x^\star), ~~~~~~y^\star \in \partial h\big(F(x^\star)\big)
\]
provided that the following qualification holds: 
\begin{equation}\label{eqn:qualVaAn}
y\in N_{\dom h}\big(F(x^\star)\big) ~~ \mbox{ and }~ ~  0 \in \partial_x l_F(x^\star,y) + N_X(x^\star)~~~\Longrightarrow~~~ y=0.
\end{equation}
By \cite[Corollary 10.9, Theorem 9.61]{Rockafellar_Wets_1998}, 
\[
\partial_x l_F(x, y) \subset \sum_{i=1}^m \partial (y_i f_i)(x) \subset \sum_{i=1}^m \con \big\{\partial (y_i f_i)(x)\big\} = \sum_{i=1}^m y_i \con \partial f_i(x)
\]
and after also invoking \cite[Proposition 4.58(c)]{Royset_Wets_2021}, one has
\[
\partial_x l_0(x, y) \subset \nabla f_0(x) + \sum_{i=1}^m y_i \con \partial f_i(x).
\]
Thus, the basic qualification \eqref{qualification} holding at $x^\star$ implies that \eqref{eqn:qualVaAn} holds as well because $\partial f_i(x^\star) \subset D_i(x^\star)$ for $i = 1, \dots, m$. Similarly, we conclude that  
\[
0 \in \nabla f_0(x^\star) + \sum_{i=1}^m y_i^\star \con D_i(x^\star) + N_X(x^\star), ~~~~~~y^\star \in \partial h\big(F(x^\star)\big).
\]
With $z^\star = F(x^\star)$, it follows that $0 \in S(x^\star, y^\star, z^\star)$. 

The second conclusion about equivalence with $0\in \partial(\iota_X + f_0 + h \circ F)(x^\star)$ holds by the following argument. Let $\tilde \phi(x) = \iota_X(x) + h(F(x))$. For $\tilde h:\reals^n\times\reals^m\to \Reals$ and $\tilde F:\reals^n\to \reals^{n+m}$ given by
\[
\tilde h(z_0,z) = \ind_X(z_0) + h(z)~~~\mbox{ and } ~~~ \tilde F(x) = \big(x, F(x)\big), 
\]
we obtain that $\tilde\phi(x) = \tilde h(\tilde F(x))$. Clearly, $\tilde F$ is lLc and $\tilde h$ is lsc and proper. Let $l_{\tilde F}(x,(w,y)) = \langle \tilde F(x), (w,y)\rangle$.  Under a qualification, \cite[Theorem 6.23]{Royset_Wets_2021} confirms that
\[
\partial \tilde\phi(x^\star) = \bigcup_{(w,y)\in \partial \tilde h(\tilde F(x^\star))} \partial_x l_{\tilde F}\big(x^\star,(w,y)\big)
\]
as long as $\tilde h$ is epi-regular at $\tilde F(x^\star)$ and $l_{\tilde F}(\cdot\,,(w,y))$ is epi-regular at $x^\star$ for all $(w,y) \in \partial \tilde h(\tilde F(x^\star))$. A closer examination shows that the basic qualification holding at $x^\star$ ensures that the needed qualification of the theorem indeed holds. By \cite[Proposition 4.63]{Royset_Wets_2021}, we conclude that $\tilde h$ is epi-regular at $\tilde F(x^\star)$ because $X$ is Clarke regular at $x^\star$ and $\partial h(F(x^\star)) \neq \emptyset$. Moreover, $l_{\tilde F}(\cdot\,,(w,y))$ is epi-regular at $x^\star$ because $y_i f_i$ is assumed to be epi-regular at $x^\star$; see \cite[Example 4.70]{Royset_Wets_2021}. 

By \cite[Proposition 4.58(c)]{Royset_Wets_2021}, $\partial (\iota_X + f_0 + h \circ F)(x) = \partial \tilde\phi(x) + \nabla f_0(x)$. Thus, it only remains to untangle the expression for $\partial \tilde\phi(x^\star)$. We obtain that
\begin{align*}
0 \in \partial (\iota_X + f_0 + h \circ F)(x^\star) ~~ &\Longleftrightarrow ~-\nabla f_0(x^\star) \in \partial \tilde \phi(x^\star)\\
 & \Longleftrightarrow ~ \exists y \in \partial h\big(F(x^\star)\big), ~~ -\nabla f_0(x^\star) \in \sum_{i=1}^m  \partial (y_i f_i)(x^\star) + N_X(x^\star),
\end{align*}
where we again appeal to \cite[Example 4.70]{Royset_Wets_2021} to compute $\partial_x l_{\tilde F}(x^\star,(w,y))$. Since $y_if_i$ is epi-regular at $x^\star$, $\partial (y_if_i)(x^\star)$ is a convex set and we conclude that
\[
\partial (y_i f_i)(x^\star) = \con \partial (y_i f_i)(x^\star) = y_i \con \partial f_i(x^\star) = y_i \con D_i(x^\star),
\]
which implies the assertion. \qed
\end{proof}

Before proving Proposition~\ref{lem:doublecomposition} we give some necessary ingredients. 

Note that
Assumption~\ref{assumptions-3layers} assures the existence of Lagrange multipliers $\tilde \lambda_i\geq 0$ satisfying the following KKT system, with $(x_{k+1},\tilde r)$ solving~\eqref{aux3}:
\begin{equation}\label{KKT}
\left\{
\begin{array}{llll}
0 \in & \displaystyle \nabla f_0(x_{k+1})+ \frac{(x_{k+1}-\hat x_k)}{t_k} + \nabla F(\hat x_k)^\top[\tilde\lambda_1 \mathfrak{ s}_{1k+1}+\ldots+ \tilde\lambda_d\mathfrak{s}_{dk+1}]+ N_X(x_{k+1})\\
0 \in & \partial h_0(\tilde r)-\tilde \lambda\\
0= &F(\hat x_k)+\nabla F(\hat x_k)(x_{k+1}-\hat x_k)-z_{k+1}\\
0 \in &  \partial \check{h}_i^k(z_{k+1})-\mathfrak{s}_{ik+1} \\
0= &\tilde\lambda_i [\check{h}_i^k(z_{k+1})- \tilde r_i] ,\;  i=1,\ldots,d\\
0\geq &\check{h}_i^k(z_{k+1})- \tilde r_i, \; \tilde\lambda_i \geq 0,\;  i=1,\ldots,d.
\end{array}
\right.
\end{equation}
Now we specify how to compute the models' subgradients  $\mathfrak{s}_{ik+1} \in \partial \check{h}_i^k(z_{k+1})$ in~\eqref{KKT}. To this end, we open up the cutting-plane models $\check{h}^k_i$ to write the master problem~\eqref{aux3} as:
\[
\left\{
\begin{array}{llll}
 \displaystyle \min_{x,r} &  \displaystyle f_0(x)+ h_0(r) + \frac{1}{2t_k}\norm{x-\hat x_k}^2\\
 \mbox{s.t.} &  x \in X, \,r\in \Re^d\\
  & \check h_1^{k-1}(z_k)+\inner{\mathfrak{s}_{1k}}{F(\hat x_k)+\nabla F(\hat x_k)(x-\hat x_k)- z_k)}\leq r_1\\
 & h_1(z_j)+\inner{s_{1j}}{F(\hat x_k)+\nabla F(\hat x_k)(x-\hat x_k)-z_{j}}\leq r_1,\; \forall\,j \in B_1^k\\
 & \qquad\qquad\qquad\vdots\\
  & \check h_d^{k-1}(z_k)+\inner{\mathfrak{s}_{dk}}{F(\hat x_k)+\nabla F(\hat x_k)(x-\hat x_k)- z_k)}\leq r_d\\
  & h_d(z_j)+\inner{s_{dj}}{F(\hat x_k)+\nabla F(\hat x_k)(x-\hat x_k)-z_{j}}\leq r_d,\; \forall\,j \in B_d^k.
\end{array}
\right.
\]
Again, Assumption~\ref{assumptions-3layers} assures the existence of Lagrange multipliers $\mu_{ij}, \mu_i\geq 0$ satisfying its KKT system:
\[
\left\{
\begin{array}{llll}
0 \in & \displaystyle \nabla f_0(x_{k+1})+ \frac{(x_{k+1}-\hat x_k)}{t_k} + \nabla F(\hat x_k)^\top\Big[\sum_{i=1}^m \Big(\mu_i\mathfrak{s}_{ik}+\sum_{j \in B_i^k} \mu_{ij}s_{ij}\Big)\Big]+ N_X(x_{k+1})\\
0 \in & \partial h_0(\tilde r)-\begin{pmatrix}
\displaystyle  \mu_1+\sum_{j \in B_1^k} \mu_{1j}\\
\vdots\\
\displaystyle   \mu_d+ \sum_{j \in B_d^k} \mu_{dj}\\
\end{pmatrix}\\
& \mbox{plus feasibility and complementarity conditions.}
\end{array}
\right.
\]
This system boils down to~\eqref{KKT} by taking
\begin{align}\label{aux2}
\
\tilde \lambda_i = \mu_i + \sum_{j \in B_i^k} \mu_{ij},\; \mbox{ and }\; \mathfrak{ s}_{ik+1}= \left\{  \begin{array}{lll}
\frac{\mathfrak{s}_{ik}}{\tilde \lambda} +\sum_{j \in B_i^k} \frac{\mu_{ij}}{\tilde\lambda_i}s_{ij} & \mbox{ if $\tilde\lambda_i>0$}\\
s_i \in \partial \check{h}_i^k(z_{k+1}) \mbox{ (arbitrary)} & \mbox{ if $\tilde \lambda_i=0$}
\end{array}
\right. \quad i=1,\ldots,d.
\end{align}

\begin{proof} 
 {\bf of Proposition~\ref{lem:doublecomposition}.}
Since $h_0$ is nondecreasing, and $\check{H}_k\leq H$ thanks to~\eqref{models-h}, we get that 
\[
\modelh_k(z) = h_0\circ \check{H}_k(z)\leq h_0\circ H(z)=h(z)\quad \forall\, z \in \Re^d,
\]
i.e., $\modelh_k$ is a lower model. Furthermore, as  $B_i^{k+1}$ contains the index of the linearization at the last serious iterate,
 we have by definition of $\check{h}_i^k$ that
$h_i(\hat z_k)+\inner{\hat s_{ik}}{z-\hat z_k}\leq \check{h}^k_{i}(z)$, for all $k$ (in particular~\eqref{model=c} holds).
We now turn our attention to the conditions in \eqref{model_req}.
Let us first focus on the third condition.
Given  $\mathfrak{s}_{i{k+1}} \in \partial  \check{h}_i^k(z_{k+1})$ in~\eqref{aux2}, $i=1,\ldots,d$, let 
\[
\mathbb{H}_{-k}(z)=
\begin{pmatrix}
\check{h}_1^k(z_{k+1})+\inner{\mathfrak{s}_{1k+1}}{z-z_{k+1}}\\
\vdots\\
\check{h}_m^k(z_{k+1})+\inner{\mathfrak{s}_{dk+1}}{z-z_{k+1}}
\end{pmatrix}
\leq \check{H}_{k}(z)\leq H(z)\quad \forall\, k.
\]
Observe that
\[
\partial  [h_0\circ \mathbb{H}_{-k}](z)= \col{\sum_{i=1}^d\lambda_i\mathfrak{s}_{ik+1}\,:\; \lambda \in \partial h_0(\mathbb{H}_{-k}(z))}.
\]
As $\mathbb{H}_{-k}(z_{k+1})=\check{H}_k(z_{k+1})$,
by taking $\tilde \lambda $ as in~\eqref{aux2} we conclude that
\[
y_{k+1}=\sum_{i=1}^d\tilde\lambda_i\mathfrak{s}_{ik+1} \in 
\partial  [h_0\circ \mathbb{H}_{-k}](z_{k+1}).
\]
Hence, by~\eqref{models-h}, we have that for all $z\in \Re^n$,
\begin{align*}
\modelh_{k+1}(z)&= h_0\circ \check{H}_{k+1}(z) \geq  h_0\circ \mathbb{H}_{-k}(z) \\
&\geq h_0\circ \mathbb{H}_{-k}(z_{k+1})+\inner{y_{k+1}}{z-z_{k+1}}\\
&= \modelh_k(z_{k+1})+\inner{y_{k+1}}{z-z_{k+1}}.
\end{align*}
This shows that the updated model overestimates the last linearization in~\eqref{model_req}.
Observe that, since $B_i^{k+1}$ contains the index of the linearization at the last serious iterate, the first condition in~\eqref{model_req} can be shown to hold in a similar manner as demonstrated above. The same applies to the second condition, as ${k+1} \in B_i^{k+1}$ for all $k+1$.

Now, to show that the proposed model satisfies~\eqref{uniform_bound} we only need to rely on convexity and the chain rule for differentiation:
as $h_0$ is convex and $\check{H}_k$ is polyhedral we have that
\begin{equation}\label{chain-rule}
\partial \modelh_k(z) = \partial  [h_0\circ \check{H}_{k}](z)= \col{\sum_{i=1}^d \lambda_i\mathfrak{s}_i\,:\; \lambda \in \partial h_0(r),\, r=\check{H}_{k}(z),\, \mathfrak{s}_i \in \partial \check{h}_i^k(z)}.
\end{equation}
As $\check{H}_k(z)=(\check{h}_1^k(z),\ldots, \check{h}_d^k(z))$,   
and the subdifferentials of $h_i$, $i=0,1,\ldots,d$, are locally bounded, we conclude that $\partial \modelh_k(z)$ is locally bounded and thus~\eqref{uniform_bound} follows. 
\mybox
\end{proof}

\end{document}